\def\tank#1{\protected@xdef\@thanks{\@thanks
 \protect\footnotetext[0]{#1}}}
\def\bigfoot{

 \@footnotetext}
\newcommand{\ea}{\end{array}}
\numberwithin{equation}{section}
\newtheorem{theorem}{Theorem}[section]
\newtheorem{lemma}{Lemma}[section]
\newtheorem{proposition}{Proposition}[section]
\newtheorem{cor}{Corollary}[section]
\newtheorem{remark}{Remark}[section]
\def\beq{\begin{equation}}
\def\nneq{\end{equation}}
\def\bthm{\begin{theorem}}
\def\nthm{\end{theorem}}
\def\blem{\begin{lemma}}
\def\nlem{\end{lemma}}
\def\bprf{\begin{proof}}
\def\nprf{\end{proof}}
\def\bprop{\begin{prop}}
\def\nprop{\end{prop}}
\def\brmk{\begin{rem}}
\def\nrmk{\end{rem}}
\def\bexa{\begin{exa}}
\def\nexa{\end{exa}}
\def\bcor{\begin{cor}}
\def\ncor{\end{cor}}
\def\Var{\mathrm {Var}}
\newcommand{\ep}{\varepsilon}
\title[Generalized Fractional Brownian Motion]{Exact uniform 
modulus of continuity  
and  Chung's LIL for the generalized fractional Brownian motion}
\author[R. Wang]{Ran Wang}
\address[]{Ran Wang, School of Mathematics and Statistics,  Wuhan University,  Wuhan, 430072,  
China.}
\email{rwang@whu.edu.cn}
\author[Y. Xiao]{Yimin Xiao*} \thanks{*Corresponding author. E-mail: xiaoy@msu.edu.}
\address[ ]{Yimin Xiao, Department of Statistics and Probability, Michigan State University, East Lansing, 
MI 48824, USA.}\email{xiaoy@msu.edu}
\date{}
\begin{document}
\maketitle

 \noindent {\bf Abstract:} 
The generalized fractional Brownian motion (GFBM) $X:=\{X(t)\}_{t\ge0}$ with parameters $\gamma \in [0, 1)$ 
and $\alpha\in \left(-\frac12+\frac{\gamma}{2}, \,   \frac12+\frac{\gamma}{2} \right)$ is a centered Gaussian 
$H$-self-similar process introduced by Pang and Taqqu (2019)
as the scaling limit of power-law shot noise processes, where $H = \alpha-\frac{\gamma}{2}+\frac12 \in(0,1)$. 
When $\gamma = 0$, $X$  is the ordinary fractional Brownian motion. When $\gamma \in (0, 1)$,  GFBM $X$ 
does not have stationary increments, and its sample path properties such as H\"older continuity, path 
differentiability/non-differentiability, and the functional law of the iterated logarithm (LIL) have been investigated 
recently by Ichiba, Pang and Taqqu (2021).  They mainly focused on sample path properties that are described in 
terms of the self-similarity index $H$ (e.g., LILs at infinity or at the origin). 
  
In this paper, we further study the sample path properties of GFBM $X$ and establish the exact uniform modulus 
of continuity,  small ball probabilities, and Chung's laws of iterated logarithm at any fixed point $t > 0$. Our 
results show that the local regularity properties away from the origin and fractal properties  of GFBM $X$ are 
determined by the index $\alpha +\frac1 2$, instead of the self-similarity index $H$. This is in contrast with the 
properties of ordinary fractional Brownian motion whose local and asymptotic properties are determined by the 
single index $H$. 
 \vskip0.3cm

 \noindent{\bf Keyword:} {Gaussian self-similar process; generalized fractional Brownian motion; exact uniform modulus 
 of continuity; small ball probability; Chung's LIL;  tangent process; Lamperti's transformation.}
 \vskip0.3cm

\noindent {\bf MSC: } {60G15, 60G17, 60G18, 60G22.}
\vskip0.3cm

 \section{Introduction and main results}


The generalized fractional Brownian motion (GFBM, for short) $X:=\{X(t)\}_{t\ge0}$ is a centered Gaussian  self-similar 
process introduced by Pang and Taqqu \cite{PT2019} as the scaling limit of power-law shot noise processes. It has 
the following stochastic integral representation:
 \begin{align}\label{eq X}
 \{X(t)\}_{t\ge0}\overset{d}{=}&\left\{  \int_{\mathbb R}  \left((t-u)_+^{\alpha}-(-u)_+^{\alpha} \right) |u|^{-\gamma/2}
  B(du)  \right\}_{t\ge0},
 \end{align}
 where the parameters $\gamma$ and $\alpha$ satisfy
 \begin{align}\label{eq constant}
 \gamma\in [0,1),  \ \  \alpha\in \left(-\frac12+\frac{\gamma}{2}, \  \frac12+\frac{\gamma}{2} \right),
 \end{align}
and where $B(du)$ is a Gaussian random measure in $\mathbb R$ with the Lebesgue  control measure $du$.  
It follows that the Gaussian process $X$ is self-similar with index $H$ given by  
 \begin{align}
 H=\alpha-\frac{\gamma}{2}+\frac12\in(0,1).
 \end{align}
When $\gamma=0$, $X$ becomes an ordinary fractional Brownian motion  (FBM) $B^H$ which 
can be represented as:
\begin{align}\label{eq FBM}
\left\{B^H(t)\right\}_{t\ge 0}\overset{d}{=}\left\{ \int_{\mathbb R}  
\left((t-u)_+^{H-\frac12}-(-u)_+^{H-\frac12} \right) B(du)  \right\}_{t\ge0}.
\end{align} 
However, when $\gamma \ne 0$, $X$ does not have the property of stationary increments.   
 
Fractional Brownian motion $B^H$ has been studied extensively in the literature. It is well known that $B^H$ arises 
naturally as the scaling limit of many interesting stochastic systems. For example,  \cite{KK2004} or  
\cite[Chapter 3.4]{PipTaq2017} showed that the scaled power-law Poisson shot noise process with stationary 
increments converges to $B^H$. Pang and Taqqu \cite{PT2019} studied a class of integrated shot-noise processes 
with power-law non-stationary conditional variance functions and proved in their Theorem 3.1 that the corresponding 
scaled process converges weakly to GFBM $X$.  

As shown by Pang and Taqqu \cite{PT2019},  GFBM $X$ is a natural generalization of the ordinary FBM. 
It preserves the self-similarity property while the factor $|u|^{-\gamma/2}$ introduces non-stationarity of increments, 
which is useful for reflecting the non-stationarity in physical stochastic systems. Ichiba, Pang and Taqqu \cite{IPT2020} 
raised the interesting question: ``How does the  parameter $\gamma$ affect the sample path properties of  GFBM?" 
They proved in \cite{IPT2020}  that, for any $T>0$ and $\ep>0$, the sample paths of  $X$  are H\"older continuous  in $[0,T]$ of 
order $H-\ep$ and the functional and local laws of the iterated logarithm of $X$ are determined by the self-similarity 
index $H$.  More recently,  Ichiba, Pang and Taqqu \cite{IPT2020b} studied the semimartingale properties of   
GFBM $X$ and its mixtures and applied them to model the volatility processes in finance.

In this paper, we study precise local sample path properties of  GFBM $X$, including the exact uniform modulus of 
continuity,  small ball probabilities,   Chung's law of iterated logarithm at any fixed point $t > 0$ and the tangent 
processes. Our main results 
are Theorems \ref{thm uniform}-\ref{thm tangent} below. They show that the local regularity properties of GFBM $X$ 
away from the origin are determined by the index $\alpha +\frac1 2$, instead of the self-similarity index $H= \alpha
-\frac{\gamma}{2}+\frac12$. Our results also imply that the fractal properties of GFBM $X$ are determined by 
$\alpha +\frac1 2$, see Remarks \ref{Re:1} (ii) and \ref{Re:refine} below. This is in contrast with the ordinary fractional 
Brownian motion whose local, fractal, and asymptotic properties are determined by the single index $H$. 
We remark that our results are also useful for studying other fine sample path properties of GFBM $X$. For example,  
one can determine the exact Hausdorff measure functions for various fractals generated by the sample paths, and 
prove sharp H\"older conditions and tail probability estimates for the local times of GFBM $X$ as in, e.g.,  
\cite{Tal95, Tal98, Xiao1997,Xiao2007, Xiao2009}.

The first result is related to Theorems 3.1 and 4.1 of Ichiba, Pang and Taqqu \cite{IPT2020} and 
provides the exact uniform moduli of continuity for $X$ and its derivative $X'$ (when it exists)
in $[a, b]$, where $0 < a < b< \infty$ are constants.

 \begin{theorem}\label{thm uniform}
 Let $X:=\{X(t)\}_{t\ge0}$ be the GFBM defined in (\ref{eq X}) and let $0 < a < b< \infty$ be constants.
 \begin{itemize}
\item[(a).] \, If $\alpha\in(-1/2+\gamma/2, 1/2)$, then  there exists a constant $\kappa_1\in(0,\,\infty)$ 
such that 
 \begin{align}\label{eq unif X}
 \lim_{r\rightarrow0+} \sup_{a\le t\le b}\sup_{0\le h\le r}\frac{|X(t+h)-X(t)|}{h^{\alpha+\frac12}\sqrt{\ln h^{-1}}} 
 =\kappa_1, \ \  \ \text{a.s.}
 \end{align} 
 \item[(b).] \, If $\alpha= 1/2$, then  there exists a constant $\kappa_2\in(0,\, \infty)$ 
such that 
 \begin{align}\label{eq unif X2}
 \lim_{r\rightarrow0+} \sup_{a\le t\le b}\sup_{0\le h\le r}\frac{|X(t+h)-X(t)|}{h  \ln h^{-1}} 
 \le\kappa_2, \ \  \ \text{a.s.}
 \end{align}  
 \item[(c).]\, If $\alpha\in (1/2, 1/2+\gamma)$, then $X$ has a modification that 
 is continuously differentiable on $[a,b]$ and its derivative $X'$ satisfies that
 \begin{align}\label{eq unif X'}
 \lim_{r\rightarrow0+} \sup_{a\le t\le b}\sup_{0\le h\le r}\frac{|X'(t+h)-X'(t)|}{h^{\alpha-\frac12}\sqrt{\ln h^{-1}}} 
 =\kappa_3, \ \  \ \text{a.s.},
 \end{align} 
 where  $\kappa_3\in(0,\,\infty)$ is a constant.
 \end{itemize}
 \end{theorem}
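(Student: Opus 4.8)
The plan is to reduce every assertion to sharp estimates on the incremental variance of $X$ and then run the two halves of the standard machinery for exact moduli of Gaussian processes: an upper bound from metric entropy together with the Gaussian tail, and a matching lower bound from local nondeterminism together with Borel--Cantelli. I would begin with the variance. Since the term $(-u)_+^\alpha$ does not depend on $t$, it cancels in every increment, so
\[
\sigma^2(t,t+h):=\EE\big[(X(t+h)-X(t))^2\big]=\int_{-\infty}^{t+h}\big((t+h-u)_+^\alpha-(t-u)_+^\alpha\big)^2|u|^{-\gamma}\,du,
\]
and the change of variables $u=t+h-hw$ turns this into
\[
\sigma^2(t,t+h)=h^{2\alpha+1}\int_0^\infty\big(w^\alpha-(w-1)_+^\alpha\big)^2\,|t+h-hw|^{-\gamma}\,dw.
\]
For $\alpha\in(-\tfrac12+\tfrac\gamma2,\tfrac12)$ I would split the $w$-integral according to $|u|>\delta$ and $|u|\le\delta$: on the first region $|t+h-hw|^{-\gamma}\to t^{-\gamma}$ uniformly on $[a,b]$ and the integrand is dominated (it behaves like $w^{2\alpha-2}$ at infinity, integrable exactly because $\alpha<\tfrac12$), while the second region contributes only $O(h^2)=o(h^{2\alpha+1})$. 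Dominated convergence then gives, uniformly in $t\in[a,b]$,
\[
\sigma^2(t,t+h)=C_\alpha\,t^{-\gamma}h^{2\alpha+1}(1+o(1)),\qquad C_\alpha=\frac1{2\alpha+1}+\int_0^\infty\big((1+s)^\alpha-s^\alpha\big)^2\,ds .
\]
The same computation with a crude bound yields $\EE[(X(s)-X(t))^2]\asymp|s-t|^{2\alpha+1}$ for all $s,t\in[a,b]$, the input the entropy method needs; and when $\alpha=\tfrac12$ the $w$-integral diverges logarithmically, producing $\sigma^2(t,t+h)\asymp t^{-\gamma}h^2\ln(1/h)$, the source of the extra logarithm in part (b).

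For the upper bound in (a) I would insert $\EE[(X(s)-X(t))^2]\le C|s-t|^{2\alpha+1}$ into the usual chaining/Gaussian-tail estimate for the uniform modulus of a Gaussian process, applied along a geometric sequence of scales $h_n\downarrow0$ and combined with Borel--Cantelli; this form of the argument handles the double supremum (self-normalized increments), and the precise constant is read off from the sharp variance asymptotics, the supremum over $t$ contributing $\max_{a\le t\le b}t^{-\gamma/2}=a^{-\gamma/2}$. This produces the bound $\le\kappa_1$ with the natural candidate $\kappa_1=(2C_\alpha a^{-\gamma})^{1/2}$, which reduces to the classical fractional Brownian modulus when $\gamma=0$.

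The lower bound in (a) is the part I expect to be the main obstacle. A one-sided strong local nondeterminism is easy: since $X(s)$ for $s\le t$ is measurable with respect to the restriction of $B$ to $(-\infty,t]$, the piece $\int_t^{t+h}(t+h-u)^\alpha|u|^{-\gamma/2}B(du)$ of $X(t+h)-X(t)$ is orthogonal to the past, whence $\Var(X(t+h)-X(t)\mid X(s),\,s\le t)\ge c\,t^{-\gamma}h^{2\alpha+1}$. The difficulty is that conditioning on the \emph{entire} past recovers only the local constant $1/(2\alpha+1)$, which is strictly smaller than $C_\alpha$, so this route cannot yield the sharp constant: the ``past'' contribution genuinely helps the increment be large and must be retained. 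I would therefore place well-separated points $t_k\in[a,b]$ with gaps $\gg h$, show that the correlation between $X(t_j+h)-X(t_j)$ and $X(t_k+h)-X(t_k)$ tends to $0$ as $|t_j-t_k|/h\to\infty$ (so each increment asymptotically carries its full variance $C_\alpha t_k^{-\gamma}h^{2\alpha+1}$ and the family is asymptotically independent), and then bound $\mathbb{P}\big(\max_k|X(t_k+h_n)-X(t_k)|\le(\kappa_1-\ep)h_n^{\alpha+1/2}\sqrt{\ln h_n^{-1}}\big)$ by a product small enough to be summable in $n$, using an Anderson/Slepian-type Gaussian comparison. Borel--Cantelli then gives the matching lower bound. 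Balancing the number of points (enough for Borel--Cantelli, forcing $\ln N\sim\ln h_n^{-1}$) against their separation (enough for near-independence) is the technical crux.

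Finally, part (b) requires only the upper bound, which follows verbatim from the entropy method applied to $\sigma^2(t,t+h)\asymp t^{-\gamma}h^2\ln(1/h)$, the normalization $h\ln h^{-1}=\sqrt{h^2\ln(1/h)}\cdot\sqrt{\ln(1/h)}$ matching the estimate. For part (c) with $\alpha>\tfrac12$ I would first justify differentiating under the integral to obtain the centered Gaussian process $X'(t)=\alpha\int_{\mathbb R}(t-u)_+^{\alpha-1}|u|^{-\gamma/2}B(du)$; the defining integral converges in $L^2$ precisely because $\alpha>\tfrac12$ tames the singularity at $u=t$ while $\alpha<\tfrac12+\tfrac\gamma2$ tames the tail at $u\to-\infty$, and one checks that $X'$ admits a continuous modification. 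Running the analysis of part (a) for $X'$ — the same substitution now gives $\EE[(X'(t+h)-X'(t))^2]=C_\alpha' t^{-\gamma}h^{2\alpha-1}(1+o(1))$, with exponent $\alpha-\tfrac12$ — yields the exact modulus \eqref{eq unif X'}.
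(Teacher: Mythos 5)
Your groundwork is sound and agrees with the paper's: the incremental variance asymptotics $\sigma^2(t,t+h)=C_\alpha t^{-\gamma}h^{2\alpha+1}(1+o(1))$ with $C_\alpha=\frac{1}{2\alpha+1}+\int_0^\infty((1+s)^\alpha-s^\alpha)^2\,ds$ is exactly the paper's Lemma 3.2 (the constant $c_{3,9}$), the two-sided bound $\asymp|t-s|^{2\alpha+1}$ and the logarithmic correction at $\alpha=1/2$ are Lemma 3.1, and your observation that conditioning on the entire past recovers only $1/(2\alpha+1)<C_\alpha$ is precisely why the paper's own upper and lower bounds on $\kappa_1$ do not match. (Working with $X$ directly instead of the paper's decomposition $X=Y+Z$ with $Y$ smooth is immaterial.) The genuine gap is in the lower bound of parts (a) and (c). You aim at the sharp constant via well-separated increments and an ``Anderson/Slepian-type Gaussian comparison,'' but Slepian-type inequalities go the wrong way for maxima of absolute values; what is actually required is the normal comparison lemma with quantitative correlation decay, and the bookkeeping — $N\asymp h^{-1}$ points at separation $Dh$, a comparison error of order $N^2\rho\,e^{-u^2/(1+\rho)}$ that is \emph{not} summable for fixed correlation level $\rho$ unless $D$ is tuned against $\varepsilon$ using polynomial decay of the (nonstationary) increment correlations, which you have not verified for $X$ — is exactly the step you yourself label the ``technical crux'' and leave unresolved. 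As written, the lower half of (a) and (c) is a plan, not a proof.

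The gap is also structural: without \emph{matching} constants on both sides, your scheme does not yield the statement at all, because the theorem asserts that the limit exists and is a nonrandom constant $\kappa_1\in(0,\infty)$ — a lim inf bounded below by one constant and a lim sup bounded above by a larger one give neither existence of the limit nor its a.s. constancy. The paper never identifies $\kappa_1$; instead it notes that $J(r)=\sup_{a\le t\le b}\sup_{0\le h\le r}|Z(t+h)-Z(t)|/(h^{\alpha+1/2}\sqrt{\ln h^{-1}})$ is monotone in $r$, so the limit exists a.s., invokes the zero-one law for uniform moduli of continuity (Lemma 7.1.1 of \cite{MR}) to conclude the limit is a.s. constant, and then needs only crude, non-matching bounds: the entropy/Fernique upper bound (which you have) and a lower bound obtained by feeding the one-sided SLND — which you already derived, since $\int_t^{t+h}(t+h-u)^\alpha|u|^{-\gamma/2}B(du)$ is orthogonal to the past — into Theorem 4.1 of \cite{MWX}. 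Indeed the paper only establishes $c_{7,2}\le\kappa_1\le c_{7,3}$ with $c_{7,2}<c_{7,3}$, so your claimed value $\kappa_1=(2C_\alpha a^{-\gamma})^{1/2}$ is both unproven and stronger than anything asserted. To repair your write-up, either add the monotonicity-plus-zero-one-law step (after which your upper bound and SLND observation finish (a) and (c), with the differentiation under the integral in (c) justified by stochastic Fubini as in the paper's Proposition 2.1 and Lemma 3.3), or carry out the separated-points comparison argument in full — a task substantially harder than everything else in your proposal. Part (b), being an upper bound only, is fine as you argue it.
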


 \begin{remark} {\rm
 \begin{itemize}
 \item[(i).]  Theorem 3.1 in \cite{IPT2020} states that, for all $\varepsilon>0$,  $X$ has a modification 
that satisfies the uniform H\"older condition in $[0, T]$ of order  $ \alpha-\gamma/2+1/2-\varepsilon$.  
Our Theorem \ref{thm uniform} shows that the sample path of $X$ on any interval $[a,\, b]$ with $a> 0$
is smoother than its behavior at $t = 0$, which is determined by the  self-similarity index $ H= 
\alpha-\gamma/2+1/2$ as suggested by 
$
\mathbb E\left[X(t)^2\right]=c(\alpha, \gamma) t^{2H},
$
with
 \begin{align}\label{eq c}
 c(\alpha, \gamma)= \mathcal B(2\alpha+1, 1-\gamma)+ \int_0^{\infty}((1+u)^{\alpha}-u^{\alpha})^2 u^{-\gamma}du.
 \end{align} 
 Here and below, $\mathcal B(\cdot,\cdot)$ denotes the Beta function.
\item[(ii).] We believe that the equality in (\ref{eq unif X2}) holds. However, we have not been able to prove this. The 
reason is that, when $\alpha = 1/2$, the lower bounds in Lemma \ref{Lem:VarioZ} and Proposition \ref{lem SLND} 
are different. Similarly, the case of $\alpha = 1/2$ is excluded in Theorems \ref{thm Xsb}-\ref{thm LIL0} below.
 \end{itemize}
}
\end{remark}

The next two results are on the small ball probabilities of $X$. They show a clear difference for the 
two cases of $s \in [0, r]$ 
and $s\in [t-r, t + r]$ with $t >r> 0$. The small ball probabilities are not only useful for proving Chung's law 
of iterated logarithm (Chung's LIL, in short) in Theorem \ref{thm LIL0} but also have many other applications.
We refer to Li and Shao \cite{LiShao01} for more information.
  
 \begin{theorem}\label{thm Xsb}
Assume $\alpha\in (-1/2+\gamma/2, \,1/2)$. Then there exist    constants $\kappa_4, \kappa_5\in (0,\infty)$ such that for all $r>0$ and $ 0<\varepsilon<1$, 
 \begin{align}\label{eq small X}
\exp\bigg(- \kappa_{4}  \Big(\frac{r^H}{\varepsilon}\Big)^{\frac{1}{\alpha+1/2}} \bigg)\le 
 \mathbb P\bigg\{\sup_{s\in [0,r]} |X(s)|\le  \varepsilon \bigg\} 
 \le  \exp\bigg(- \kappa_5 \Big(\frac{r^H}{\varepsilon}\Big)^{\frac{1}{\alpha+1/2}} \bigg),
 \end{align} 
 where $H=\alpha-\gamma/2+1/2$. 
 \end{theorem}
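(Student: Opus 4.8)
The plan is to exploit the $H$-self-similarity of $X$ to reduce the estimate to a single scale, and then to bound the resulting small-ball probability from both sides using the increment-variance estimate of Lemma \ref{Lem:VarioZ} and the strong local nondeterminism of Proposition \ref{lem SLND}. First I would use self-similarity, $\{X(rs)\}_{s\ge0}\overset{d}{=}\{r^HX(s)\}_{s\ge0}$, to write
\[
\mathbb P\Big\{\sup_{s\in[0,r]}|X(s)|\le\varepsilon\Big\}=\mathbb P\Big\{\sup_{s\in[0,1]}|X(s)|\le \varepsilon r^{-H}\Big\}.
\]
Setting $\delta=\varepsilon r^{-H}$, it then suffices to produce constants $c_1,c_2\in(0,\infty)$ with
\[
\exp\big(-c_1\delta^{-1/(\alpha+1/2)}\big)\le \mathbb P\Big\{\sup_{s\in[0,1]}|X(s)|\le\delta\Big\}\le \exp\big(-c_2\delta^{-1/(\alpha+1/2)}\big),
\]
the regime carrying the exponential rate being $\delta\to0+$ (equivalently $\varepsilon$ small relative to $r^H$).

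For the upper bound I would discard the neighbourhood of the origin and work on a subinterval $[a_0,1]$ with $a_0\in(0,1)$, where Proposition \ref{lem SLND} supplies strong local nondeterminism with exponent $\alpha+\tfrac12$: for ordered points $t_1<\cdots<t_n$ in $[a_0,1]$,
\[
\mathrm{Var}\big(X(t_i)\mid X(t_1),\dots,X(t_{i-1})\big)\ge \mathrm{Var}\big(X(t_i)\mid X(t_j),\,j\ne i\big)\ge c\,\min_{j\ne i}|t_i-t_j|^{2\alpha+1}.
\]
Choosing $n$ equally spaced points (spacing $\asymp 1/n$), factorising $\mathbb P\{\max_i|X(t_i)|\le\delta\}$ by successive conditioning, and applying Anderson's inequality to each Gaussian conditional law bounds each factor by $C\delta\,n^{\alpha+1/2}$; multiplying the $n$ factors and optimising over $n\asymp\delta^{-1/(\alpha+1/2)}$ gives $\exp(-c_2\delta^{-1/(\alpha+1/2)})$. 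Since $\{\sup_{[0,1]}|X|\le\delta\}\subset\{\sup_{[a_0,1]}|X|\le\delta\}$, this yields the upper bound.

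For the lower bound I would instead use the \emph{upper} estimate on the increment variance from Lemma \ref{Lem:VarioZ} to control the canonical metric $d_X(s,t)=(\mathbb E[(X(s)-X(t))^2])^{1/2}$ on $[0,1]$ and estimate the $\epsilon$-entropy $N(\epsilon)$ of $([0,1],d_X)$. Away from the origin $d_X(s,t)\asymp s^{-\gamma/2}|t-s|^{\alpha+1/2}$, so a $d_X$-ball of radius $\epsilon$ at $s$ has Euclidean radius $\asymp\epsilon^{1/(\alpha+1/2)}s^{\gamma/(2\alpha+1)}$, whence
\[
N(\epsilon)\lesssim \epsilon^{-1/(\alpha+1/2)}\int_0^1 s^{-\gamma/(2\alpha+1)}\,ds .
\]
Feeding $N(\epsilon)\lesssim\epsilon^{-1/(\alpha+1/2)}$ into the standard lower bound relating Gaussian small-ball probabilities to the metric entropy of the index set (see \cite{LiShao01} and the references therein) gives $\mathbb P\{\sup_{[0,1]}|X|\le\delta\}\ge\exp(-c_1\delta^{-1/(\alpha+1/2)})$, using $\sup_{[0,1]}|X(t)|=\sup_{[0,1]}|X(t)-X(0)|$ to pass from oscillation to supremum.

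The main obstacle is the interplay of the two regimes near the origin: there $X$ is only $H$-Hölder with $H<\alpha+\tfrac12$, so a priori the origin could dominate and force the larger exponent $1/H$. The upper bound sidesteps this by restricting to $[a_0,1]$, where SLND holds with the better exponent $\alpha+\tfrac12$. The delicate point is therefore the lower bound, where one must check that the rough behaviour near $0$—encoded in the singular factor $s^{-\gamma/2}$ in $d_X$—adds only a bounded number of $\epsilon$-balls and does not raise the entropy exponent. This is exactly the convergence of $\int_0^1 s^{-\gamma/(2\alpha+1)}\,ds$, which holds iff $\gamma<2\alpha+1$, i.e. iff the standing hypothesis $\alpha>-\tfrac12+\tfrac\gamma2$ is in force; this is precisely why the index $\alpha+\tfrac12$, rather than the self-similarity index $H$, governs the small-ball rate. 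A final bookkeeping step is to keep $c_1,c_2$ uniform in $\delta$ so that, after undoing $\delta=\varepsilon r^{-H}$, the estimate takes the stated form in $r$ and $\varepsilon$.
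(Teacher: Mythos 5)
Your proposal is correct in substance and arrives at the stated bounds, but it is organized genuinely differently from the paper. The paper never runs the small-ball machinery on $X$ itself: it uses the decomposition $X=Y+Z$ of \eqref{eq decom}, proves the two-sided estimate for the generalized Riemann--Liouville part $Z$ in Proposition \ref{lem small} (lower bound via the Talagrand--Ledoux entropy Lemma \ref{Lem:Ta93}, upper bound by quoting Theorem 2.1 of Monrad and Rootz\'en \cite{MR1995} together with the one-sided SLND of Proposition \ref{lem SLND}), proves a cruder lower bound $\exp(-c\,r^{H}\varepsilon^{-1})$ for the smooth part $Y$ (Lemma \ref{Lem: Ysb}, from Lemma \ref{lem Y moment}), and then combines: independence of $Y$ and $Z$ gives the lower bound (the exponent $1$ from $Y$ being absorbed since $1/(\alpha+1/2)>1$ when $\alpha<1/2$ and $\varepsilon<1$), while Anderson's inequality \cite{And1955} discards $Y$ for the upper bound. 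You instead work with $X$ directly: your upper bound re-derives the Monrad--Rootz\'en argument by successive conditioning, which is legitimate because, as the paper remarks after Proposition \ref{lem SLND}, $X$ has the same one-sided SLND as $Z$; and your entropy count $N(\epsilon)\lesssim \epsilon^{-1/(\alpha+1/2)}\int_0^1 s^{-\gamma/(2\alpha+1)}ds$ is exactly the continuum version of the paper's discrete covering recursion $t_n=t_{n-1}+t_{n-1}^{\gamma/(2\alpha+1)}\varepsilon^{2/(2\alpha+1)}$ with $t_1=\varepsilon^{1/H}$, whose solution $t_n\asymp n^{1/\beta}\varepsilon^{1/H}$, $\beta=2H/(2\alpha+1)$, encodes the same convergence condition $\gamma<2\alpha+1$ that you correctly identify as the reason the exponent is $1/(\alpha+1/2)$ rather than $1/H$. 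Your route buys a self-contained argument with no decomposition; the paper's route buys shorter proofs by quoting \cite{MR1995} and letting independence and Anderson's inequality absorb the smooth part.

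Two details in your sketch need repair, both fixable. First, the second inequality in your SLND display, $\Var\big(X(t_i)\mid X(t_j),\,j\ne i\big)\ge c\,\min_{j\ne i}|t_i-t_j|^{2\alpha+1}$, is a \emph{two-sided} SLND that the paper never establishes (Proposition \ref{lem SLND} is one-sided only, and two-sided SLND is delicate for Riemann--Liouville-type kernels); your successive-conditioning scheme only needs $\Var\big(X(t_i)\mid X(t_1),\dots,X(t_{i-1})\big)\ge c\,|t_i-t_{i-1}|^{2\alpha+1}$ for ordered points, which is precisely the one-sided statement, so delete the intermediate inequality. Second, your lower bound requires an upper bound on the canonical metric of $X$, not of $Z$: Lemma \ref{Lem:VarioZ} controls $d_Z$, but $d_X^2=d_Y^2+d_Z^2$, and by Lemma \ref{lem Y moment} the smooth part contributes $d_Y(s,t)\lesssim |t-s|\,s^{H-1}$, which is singular as $s\downarrow 0$; one must check that on your covering scale $|t-s|\lesssim\delta^{1/(\alpha+1/2)}s^{\gamma/(2\alpha+1)}$ and for $s\ge\delta^{1/H}$ (with $[0,\delta^{1/H}]$ a single $d_X$-ball, since $\mathbb E[X(t)^2]=c(\alpha,\gamma)\,t^{2H}$) this term is also $\lesssim\delta$ --- a short computation using $|t-s|\lesssim s$ in that range, which your sketch omits and which the paper's independence trick avoids. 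Minor further points: the restriction to $[a_0,1]$ in your upper bound is unnecessary, since Proposition \ref{lem SLND} holds uniformly on $[0,b]$; and, like the paper, your scaling reduction really establishes the two-sided bound for $\delta=\varepsilon r^{-H}$ in a bounded range, which is the regime the theorem is about.
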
 
   
 \begin{theorem}\label{Prop:sbX}
 \begin{itemize}
 \item[(a).]\, Assume $\alpha\in(-1/2+\gamma/2,\,1/2)$.  Then
   there exist   constants $\kappa_6, \kappa_7\in (0,\infty)$  
such that for all $t>0, r \in (0, t/2)$ and $\varepsilon\in \left(0, r^{\alpha+1/2}\right)$,
   \begin{equation} \label{Eq:sb1210}
   \begin{split}
  \exp\bigg(-\kappa_6\,r \, c_1(t)\Big(\frac{1}{\varepsilon}\Big)^{\frac1{\alpha+1/2}}\bigg)\le  &\,
  \mathbb P\bigg\{\sup_{|h|\le r}|X(t+h)-X(t)|\le \varepsilon \bigg\}\\
 &\, \le   \exp\bigg(-\kappa_7\,r\, c_2(t)\Big(\frac{1}{\varepsilon}\Big)^{\frac{1}{\alpha+1/2}}\bigg),
  \end{split}
   \end{equation} 
   where $c_1(t)=\max\left\{t^{\alpha-\gamma/2-1/2}, t^{-\gamma/(2\alpha+1)}\right\}$ and $c_2(t)
   =t^{-\gamma/(2\alpha+1)}$.
   \item[(b).]\,
   Assume    $\alpha\in(1/2,\, 1/2+\gamma/2)$.  Then there 
   exist   constants $\kappa_8, \kappa_9\in (0,\infty)$  
   such that for all $t>0, r \in (0, t/2)$ and $\varepsilon\in \left(0, r^{\alpha-1/2}\right)$,
   \begin{equation} \label{Eq:sb2210}
   \begin{split}
  \exp\bigg(-\kappa_8\, r\, c_3(t)\Big(\frac{1}{\varepsilon}\Big)^{\frac1{\alpha-1/2}}\bigg)\le &
\,   \mathbb P\bigg\{\sup_{|h|\le r}|X'(t+h)-X'(t)|\le \varepsilon \bigg\} \\
  &\, \le   \exp\bigg(-\kappa_9\, r\, c_4(t)\Big(\frac{1}{\varepsilon}\Big)^{\frac{1}{\alpha-1/2}}\bigg),
   \end{split}
   \end{equation}  
    where $c_3(t)=\max\left\{t^{\alpha-\gamma/2-3/2}, t^{-\gamma/(2\alpha-1)}\right\}$ and $c_4(t)
    =t^{-\gamma/(2\alpha-1)}$.
   \end{itemize}
 \end{theorem}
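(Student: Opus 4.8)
The plan is to read both parts as small-ball estimates for the centered Gaussian process $Y(h):=X(t+h)-X(t)$, $|h|\le r$ (respectively $Y(h):=X'(t+h)-X'(t)$ in part (b)), and to extract them from the two inputs already isolated in the excerpt: the variogram bounds of Lemma \ref{Lem:VarioZ}, which control the canonical metric $d(h_1,h_2)^2=\mathbb E\big[(X(t+h_1)-X(t+h_2))^2\big]$ from above, and the strong local nondeterminism of Proposition \ref{lem SLND}, which controls the conditional variances from below. The guiding principle (as in Monrad--Rootz\'en, Talagrand, and Li--Shao) is that for a Gaussian process on an interval of length comparable to $r$ whose local oscillation has index $\beta=\alpha+\tfrac12$, the small-ball probability is of order $\exp(-c\,r\,\varepsilon^{-1/\beta})$; the whole point here is to track how the prefactor depends on the base point $t$, which is exactly where the non-stationarity of $X$ enters.

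For the upper bound in (\ref{Eq:sb1210}) I would use strong local nondeterminism. Choose the grid spacing $\delta$ on $[t-r,t+r]$ so that the conditional standard deviation of $Y$ at a grid point, given the previous ones, is comparable to $\varepsilon$; since Proposition \ref{lem SLND} gives a conditional-variance lower bound of the form $t^{-\gamma}(\text{dist})^{2\alpha+1}$ and consecutive grid points are $\delta$ apart, this forces $\delta\asymp t^{\gamma/(2\alpha+1)}\varepsilon^{1/(\alpha+1/2)}$, so that the number of grid points is $N\asymp r\delta^{-1}=r\,t^{-\gamma/(2\alpha+1)}\varepsilon^{-1/(\alpha+1/2)}$. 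Conditioning successively on the values of $Y$ at the grid points and using the SLND lower bound on each conditional variance, each conditional probability that $|Y|\le\varepsilon$ at the next point is at most $\tfrac{2\varepsilon}{\sqrt{2\pi}\,\sigma_j}\le\theta$ for a fixed $\theta<1$ (uniformly in the conditional mean, after enlarging the constant in $\delta$); multiplying over the $N$ points gives $\mathbb P\{\sup_{|h|\le r}|Y(h)|\le\varepsilon\}\le\theta^{N}=\exp(-\kappa_7\,r\,c_2(t)\,\varepsilon^{-1/(\alpha+1/2)})$ with $c_2(t)=t^{-\gamma/(2\alpha+1)}$, exactly as claimed.

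For the lower bound I would use only the upper variogram bound together with the Gaussian correlation inequality. Here $d(h_1,h_2)^2$ has two competing contributions: a local ``rough'' term $\asymp t^{-\gamma}|h_1-h_2|^{2\alpha+1}$, coming from the singularity of the kernel near $u\approx t$ where $|u|^{-\gamma/2}\approx t^{-\gamma/2}$, and a ``smooth'' term $\asymp t^{2\alpha-\gamma-1}|h_1-h_2|^{2}$, coming from the differentiable part of $X$ away from the singularity. Partitioning $[t-r,t+r]$ into subintervals $I_j$ of common length $\delta$ and requiring each to have $d$-diameter at most $\varepsilon$ means $\delta$ must respect both scales; since $\alpha<\tfrac12$ gives $\varepsilon^{-1}\le\varepsilon^{-1/(\alpha+1/2)}$ (in the regime $\varepsilon<r^{\alpha+1/2}$), the resulting count satisfies $N\le r\,c_1(t)\,\varepsilon^{-1/(\alpha+1/2)}$ with $c_1(t)=\max\{t^{\alpha-\gamma/2-1/2},\,t^{-\gamma/(2\alpha+1)}\}$, the two entries of the maximum being precisely the smooth and rough scales. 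On each $I_j$ the upper variogram bound forces $\mathbb E\sup_{I_j}|Y|\lesssim\varepsilon$, so $\mathbb P\{\sup_{I_j}|Y|\le\varepsilon\}$ is bounded below by a fixed constant; the Gaussian correlation inequality then gives $\mathbb P\{\sup_{|h|\le r}|Y|\le\varepsilon\}\ge\prod_j\mathbb P\{\sup_{I_j}|Y|\le\varepsilon\}\ge\exp(-\kappa_6\,r\,c_1(t)\,\varepsilon^{-1/(\alpha+1/2)})$.

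The main obstacle, and the reason the upper and lower prefactors $c_2(t)$ and $c_1(t)$ differ, is exactly this two-scale structure together with the $t$-dependence: the SLND lower bound only sees the rough term $t^{-\gamma}$, whereas the variogram upper bound must also accommodate the smooth term, producing the extra candidate $t^{\alpha-\gamma/2-1/2}$ inside the maximum; keeping the ranges of $r$ and $\varepsilon$ as stated is what guarantees the rough term governs the exponent of $\varepsilon$. Part (b) is handled identically after replacing $X$ by its derivative $X'$, which exists and is continuous on $[a,b]$ by Theorem \ref{thm uniform}(c): the local index becomes $\alpha-\tfrac12$ in place of $\alpha+\tfrac12$, the rough contribution to the variogram scales like $t^{-\gamma}|h_1-h_2|^{2\alpha-1}$, and the differentiated versions of Lemma \ref{Lem:VarioZ} and Proposition \ref{lem SLND} feed the same discretization argument to yield $c_3(t)$ and $c_4(t)$.
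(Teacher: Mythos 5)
Your upper bound is sound: one-sided SLND does transfer from $Z$ to $X$ (the paper notes this right before Proposition \ref{lem SLND}, since $\sigma(X(r):r\le s)\subseteq\sigma(B(r):r\le s)$), and your sequential-conditioning argument over a grid of spacing $\delta\asymp t^{\gamma/(2\alpha+1)}\varepsilon^{1/(\alpha+1/2)}$ is essentially a proof of Theorem 2.1 of Monrad--Rootz\'en, which is what the paper cites. Your covering-number computation, with the two competing scales producing $c_1(t)=\max\{t^{\alpha-\gamma/2-1/2},t^{-\gamma/(2\alpha+1)}\}$, is also the right bookkeeping. But the mechanism you use to convert that covering bound into the small-ball \emph{lower} bound fails. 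The claim ``$\mathbb E\sup_{I_j}|Y|\lesssim\varepsilon$, so $\mathbb P\{\sup_{I_j}|Y|\le\varepsilon\}$ is bounded below by a fixed constant'' is false for subintervals $I_j$ away from $h=0$: writing $h_j$ for an anchor point of $I_j$, one has $\sup_{h\in I_j}|Y(h)|\ge |X(t+h_j)-X(t)|$, whose standard deviation is $\asymp t^{-\gamma/2}|h_j|^{\alpha+1/2}$, as large as $t^{-\gamma/2}r^{\alpha+1/2}$, typically much bigger than $\varepsilon$. Only the \emph{oscillation} of $Y$ within $I_j$ has expected size $\lesssim\varepsilon$; the sup of $|Y|$ does not. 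Consequently $\mathbb P\{\sup_{I_j}|Y|\le\varepsilon\}\asymp\min\{1,\varepsilon/\sigma_j\}$ with $\sigma_j\asymp t^{-\gamma/2}|h_j|^{\alpha+1/2}$, and the product over $j$ (even granting the Gaussian correlation inequality) yields only $\exp\big(-cN-\sum_j\ln_+(\sigma_j/\varepsilon)\big)$, which carries an extra logarithmic factor and does not match the claimed rate. The obvious repair of working with oscillation events instead does not close the gap either: the intersection of the oscillation events over $N$ subintervals only controls $\sup_{|h|\le r}|Y(h)|$ by $N\varepsilon$ after chaining across intervals, not by $\varepsilon$.

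This is exactly why the paper does not argue this way. Its lower bound goes through Talagrand's small-ball lower bound in Ledoux's formulation (Lemma \ref{Lem:Ta93}), which takes precisely your covering-number estimate $N_\varepsilon\le\psi(\varepsilon)$ (with a doubling condition) and returns $\mathbb P\{\sup_{s,s'}|X(s)-X(s')|\le u\}\ge\exp(-K\psi(u))$ via a genuinely multiscale Gaussian-concentration argument --- the one-scale product is not enough. Moreover, the paper applies this not to $X$ directly but through the decomposition $X=Y+Z$ with $Y\perp Z$: the lower bound is the product $\mathbb P\{\sup|Y\text{-incr}|\le\varepsilon/2\}\cdot\mathbb P\{\sup|Z\text{-incr}|\le\varepsilon/2\}$, with Lemma \ref{Lem: Ysb}(b) giving $\exp(-c\,r\,t^{H-1}\varepsilon^{-1})$ for the smooth part (whence the entry $t^{\alpha-\gamma/2-1/2}=t^{H-1}$ in $c_1(t)$, using $\varepsilon^{-1}\le\varepsilon^{-1/(\alpha+1/2)}$, which tacitly needs $\varepsilon\le 1$ --- a caveat your write-up shares with the paper) and Proposition \ref{lem small2}(a) for $Z$; the upper bound discards $Y$ by Anderson's inequality and uses SLND via Monrad--Rootz\'en, which your conditioning argument replicates. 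So: keep your covering computation and your upper bound, but replace the product-plus-correlation step by an appeal to Lemma \ref{Lem:Ta93} (applied either to $Z$ together with the $Y+Z$ splitting, or to $X$ itself). Part (b) then goes through as you say, since $Z'$ is again a generalized Riemann--Liouville FBM with indices $\alpha-1$ and $\gamma$.
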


The following are Chung's laws of the iterated logarithm for $X$ and $X'$. It is interesting to notice that
the parameters $\gamma$ and $\alpha$ play different roles. Since $X$ and $X'$ do not have stationary 
increments when $\gamma > 0$, the limits in their Chung's LILs depend on the location of $t>0$. 
\eqref{eq LIL01} and \eqref{eq LIL01b} show that the oscillations decrease at the rate 
$t^{-\gamma/2}$ as $t$ increases. This provides an explicit answer in the context of Chung's LIL to 
the aforementioned question of Ichiba, Pang and Taqqu \cite{IPT2020} regarding the effect of the parameter
$\gamma$.

\begin{theorem}\label{thm LIL0}
\begin{itemize}
       \item[(a).]\,
If $\alpha\in(-1/2+\gamma/2,1/2)$, then there exists a constant $\kappa_{10}\in(0,\infty)$ 
 such that for every $t>0$, 
    \begin{align}\label{eq LIL01}
   \liminf_{r\rightarrow0+} \sup_{ |h|\le r}\frac{ |X(t+h)-X(t)|} {r^{\alpha+1/2}/(\ln \ln 1/r)^{\alpha+1/2}}
   =\kappa_{10} t^{-\gamma/2},   \ \ \ \text{a.s.}
   \end{align} 
  \item[(b).]\, 
 If $\alpha\in(1/2, \, 1/2+\gamma/2)$,   then there exists a constant $\kappa_{11}\in(0,\infty)$  
 such that  for every $t>0$,   
    \begin{align}\label{eq LIL01b}
   \liminf_{r\rightarrow0+} \sup_{ |h|\le r}\frac{ |X'(t+h)-X'(t)|} {r^{\alpha-1/2}/(\ln \ln 1/r)^{\alpha-1/2}}
   =\kappa_{11} t^{-\gamma/2}, \  \ \ \ \text{a.s.}
   \end{align}     
\end{itemize}
 \end{theorem}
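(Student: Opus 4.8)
The plan is to treat part (a) in detail (part (b) being identical after the substitutions indicated at the end), splitting the argument into four pieces: a zero--one law reducing the $\liminf$ to a deterministic constant; a Borel--Cantelli lower bound from the small-ball \emph{upper} estimate; a decoupling upper bound (finiteness) from the small-ball \emph{lower} estimate together with the strong local nondeterminism of Proposition \ref{lem SLND}; and finally a self-similarity computation producing both the exact $t^{-\gamma/2}$ scaling and the $t$-independence of the constant. Throughout I would write $M_t(r)=\sup_{|h|\le r}|X(t+h)-X(t)|$ and $\psi(r)=r^{\alpha+1/2}(\ln\ln(1/r))^{-(\alpha+1/2)}$, so that $r\,\psi(r)^{-1/(\alpha+1/2)}=\ln\ln(1/r)$; this identity drives every cancellation below. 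Since $M_t(r)$ for $r<\delta$ is measurable with respect to the germ field $\bigcap_{\delta>0}\sigma\{X(t+s)-X(t):|s|\le\delta\}$, a zero--one law for the germ $\sigma$-field of the Gaussian process $X$ at $t$ shows that $\ell(t):=\liminf_{r\to0}M_t(r)/\psi(r)$ is a.s. a deterministic constant in $[0,\infty]$.

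For the lower bound I would first work at $t=1$, where $c_1(1)=c_2(1)=1$ and \eqref{Eq:sb1210} is sharpest. Along $r_n=\theta^n$ the small-ball \emph{upper} bound gives $\mathbb P\{M_1(r_n)\le y\,\psi(r_n)\}\le\exp(-\kappa_7\,y^{-1/(\alpha+1/2)}\ln\ln(1/r_n))\asymp(n\ln(1/\theta))^{-\kappa_7 y^{-1/(\alpha+1/2)}}$, which is summable exactly when $y<\kappa_7^{\alpha+1/2}$. The first Borel--Cantelli lemma then yields $M_1(r_n)>y\,\psi(r_n)$ for all large $n$; monotonicity of $r\mapsto M_1(r)$ and of $\psi$ near $0$ fills the gaps $r\in[r_{n+1},r_n]$ at a cost of $\psi(r_{n+1})/\psi(r_n)\to\theta^{\alpha+1/2}$, and letting $\theta\uparrow1$ gives $\ell(1)\ge\kappa_7^{\alpha+1/2}>0$.

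The finiteness $\ell(1)<\infty$ is the crux. Set $E_n=\{M_1(r_n)\le y\,\psi(r_n)\}$ with $y$ so large that $\kappa_6\,y^{-1/(\alpha+1/2)}<1$; the small-ball \emph{lower} bound in \eqref{Eq:sb1210} then gives $\mathbb P(E_n)\ge(\ln(1/r_n))^{-\kappa_6 y^{-1/(\alpha+1/2)}}$, whose sum diverges. Because the $E_n$ overlap and are positively correlated, I would decouple them using Proposition \ref{lem SLND}: conditioning on the outer field $\mathcal H_n=\sigma\{X(1+s):|s|\ge r_{n-1}\}$, the SLND bounds the conditional variances of the increments of $X$ on $[1-r_n,1+r_n]$ from below by the same order as the unconditional ones, so a conditional small-ball estimate gives $\mathbb P(E_n\mid\mathcal H_n)\ge\tfrac12\mathbb P(E_n)$ once the $\mathcal H_n$-measurable conditional-mean term, which is smooth on the shrinking interval, is absorbed into the oscillation bound. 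A conditional (Lévy-type) second Borel--Cantelli / Kochen--Stone argument then forces $E_n$ to occur infinitely often, giving $\ell(1)\le\kappa_6^{\alpha+1/2}<\infty$. I expect this decoupling to be the main obstacle: one must check quantitatively, via Proposition \ref{lem SLND} and the variogram bounds of Lemma \ref{Lem:VarioZ}, that conditioning on the outside leaves enough conditional variance inside $[1-r_n,1+r_n]$ and that the conditional mean does not spoil the oscillation bound.

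Finally, for the exact form I would invoke self-similarity. From $\{X(ct)\}_t\overset{d}{=}\{c^HX(t)\}_t$ one obtains $\{M_c(r)\}_r\overset{d}{=}\{c^H M_1(r/c)\}_r$, and since $\psi(r/c)\sim c^{-(\alpha+1/2)}\psi(r)$ as $r\to0$ while $H-(\alpha+\tfrac12)=-\tfrac{\gamma}{2}$, taking $\liminf_{r\to0}$ gives $\ell(c)\overset{d}{=}c^{-\gamma/2}\ell(1)$; as both sides are deterministic (zero--one law) this yields $\ell(t)=t^{-\gamma/2}\ell(1)$, so \eqref{eq LIL01} holds with $\kappa_{10}=\ell(1)\in(0,\infty)$. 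Part (b) follows verbatim after replacing $X$ by its $C^1$ modification $X'$ (Theorem \ref{thm uniform}(c)), the exponent $\alpha+\tfrac12$ by $\alpha-\tfrac12$, the estimate \eqref{Eq:sb1210} by \eqref{Eq:sb2210}, and the SLND/variogram inputs by their analogues for $X'$; differentiating the self-similarity relation gives $\{X'(ct)\}_t\overset{d}{=}\{c^{H-1}X'(t)\}_t$, and $(H-1)-(\alpha-\tfrac12)=-\tfrac{\gamma}{2}$ again reproduces the $t^{-\gamma/2}$ factor, with $\kappa_{11}$ the value at $t=1$.
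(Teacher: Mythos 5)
Your lower bound (first Borel--Cantelli from the upper estimate in \eqref{Eq:sb1210} along a geometric sequence, plus monotonicity) and your scaling step are sound; indeed, deducing $\ell(t)=t^{-\gamma/2}\ell(1)$ from self-similarity of $X$ is a slick alternative to the paper, which instead obtains the $t^{-\gamma/2}$ factor through Lamperti's transformation $Z(t+s)-Z(t)=[(t+s)^H-t^H]U(\ln(t+s))+t^H[U(\ln t+\ln(1+s/t))-U(\ln t)]$ and the stationarity of $U$. But two of your steps do not go through as stated. First, there is no general ``zero--one law for the germ $\sigma$-field'' of a Gaussian process: germ fields are typically nontrivial (for a differentiable process, $X'(t)$ is germ-measurable and nondegenerate). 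The paper's Lemma \ref{lem:01lawZ} proves the needed $0$--$1$ law by a specific construction: it passes to the Lamperti transform $U$, writes $U=\sum_n U_n$ as an a.s.\ uniformly convergent series of \emph{independent}, band-limited, continuously differentiable Gaussian processes (following \cite{WSX}), observes that the normalized liminf is a tail event with respect to $(U_n)$, and applies Kolmogorov's zero--one law. You would need to import that argument; invoking the germ field alone is a gap.

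Second, and more seriously, the decoupling you propose for finiteness fails with the tools available. Proposition \ref{lem SLND} is only \emph{one-sided}: it bounds $\mathrm{Var}(Z(t)\mid Z(r)\colon r\le s)$ when conditioning on the past, and says nothing about conditioning on the future; your outer field $\mathcal H_n=\sigma\{X(1+s)\colon |s|\ge r_{n-1}\}$ contains values on both sides of the interval, so the one-sided SLND cannot lower-bound the conditional variances you need, and two-sided SLND is neither proved in the paper nor obviously true for these non-stationary-increment processes. Moreover, Anderson's inequality \cite{And1955} moves small-ball probabilities in the wrong direction for your purpose ($\mathbb P\{\sup|G+m|\le\varepsilon\}\le\mathbb P\{\sup|G|\le\varepsilon\}$), so the $\mathcal H_n$-measurable conditional mean cannot simply be ``absorbed''; the claimed bound $\mathbb P(E_n\mid\mathcal H_n)\ge\frac12\mathbb P(E_n)$ is unsupported. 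The paper (Proposition \ref{lem:ZLIL}) manufactures independence differently: after discarding the smooth component $Y$ (Proposition \ref{prop Y}), it uses the spectral representation \eqref{Eq:Rep1} of $Z$ via its Lamperti transform and splits $Z=Z_n+\widetilde Z_n$ into \emph{disjoint frequency bands} \eqref{Eq:Xn1}--\eqref{Eq:Xn2} with $t_n=n^{-n}$, $d_n=n^{n+1/2-\alpha}$. The $Z_n$ are then genuinely independent, so the second Borel--Cantelli lemma applies directly; Anderson's inequality is used in the legitimate direction, $\mathbb P\{\sup|Z_n(\cdot)|\le u\}\ge\mathbb P\{\sup|Z(\cdot)|\le u\}$, because $Z_n$ and $\widetilde Z_n$ are independent; and the remainders $\widetilde Z_n$ are shown negligible using the spectral-density estimates of Lemma \ref{lem spectral} together with Fernique's Lemma \ref{lem: Fern}. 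Replacing your conditioning scheme by this band decomposition is the missing idea; without it (or a proof of two-sided SLND with control of the conditional mean), the upper bound $\ell(1)<\infty$ is not established.
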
  

\begin{remark} {\rm From the proofs of Theorems   \ref{Prop:sbX} and  \ref{thm LIL0}, we know that if $\sup_{ |h|\le r}$ is 
replaced by $\sup_{ 0\le h\le r}$ in \eqref{Eq:sb1210}-\eqref{eq LIL01b}, then the 
corresponding results also hold.}
\end{remark}
   
For completeness, we also include the following law of the iterated logarithm for GFBM $X$ at any fixed point $t > 0$. 
Part (a) of our Theorem  \ref{thm LIL2} supplements Theorem 6.1 in \cite{IPT2020} where the case of $t = 0$ was considered. 
See also Proposition \ref{prop LIL0}  at the end of the present paper for a slight improvement of \cite[Theorem 6.1]{IPT2020}  
using the time inversion property of GFBM. Theorems \ref{thm LIL0} and \ref{thm LIL2} together describe precisely the 
large and small oscillations of $X$ in the neighborhood of every fixed point $t > 0$.  As such they are useful for studying 
fine fractal properties (such as exact Hausdorff measure function and multifractal structure) of the sample path of $X$. 

The topic of LILs for Gaussian processes has been studied extensively by many authors, see for example, Arcones \cite{Arc},
Marcus and Rosen \cite{MR}, Meerschaert, Wang and Xiao \cite{MWX}. In particular, Chapter 7 of \cite{MR} provides explicit 
information about the constant in LIL for Gaussian processes with stationary increments under extra regularity conditions 
on the variance of the increments or the spectral density functions. However, the regularity conditions in \cite{MR} are not 
easy to verify for the stationary processes obtained from GFBM $X$, so the results in \cite{MR} can not be applied directly. 
Instead, we will make use of Theorem 5.1 in \cite{MWX} and the stationary Gaussian process $U$ in Section \ref{sec:Lamperti} 
to prove the following theorem.  As in Theorem \ref{thm LIL0},  our result below describes explicitly the roles played by the 
 parameters $\gamma$ and $\alpha$ and the location $t>0$.

\begin{theorem}\label{thm LIL2}
\begin{itemize}
\item[(a).]\,
If $\alpha\in(-1/2+\gamma/2,1/2)$, then there exists a constant $\kappa_{12}\in(0,\infty)$ such that  for every $t>0$,   
   \begin{align}\label{eq LIL21}
   \limsup_{r\rightarrow0+} \sup_{ |h|\le r}\frac{ |X(t+h)-X(t)|} {r^{\alpha+1/2} \sqrt{\ln \ln 1/r}}=\kappa_{12} t^{-\gamma/2}, 
   \  \ \ \ \text{a.s.}
   \end{align} 
   \item[(b).]\,  If $\alpha =1/2$, then there exists a constant $\kappa_{13}\in(0,\infty)$ such that  for every $t>0$,   
   \begin{align}\label{eq LIL21a}
   \limsup_{r\rightarrow0+} \sup_{ |h|\le r}\frac{ |X(t+h)-X(t)|} {r \sqrt{\ln (1/r) \ln \ln 1/r}} = \kappa_{13}t^{-\gamma/2},  
   \ \ \ \text{a.s.}
   \end{align} 
  \item[(c).]\,If $\alpha\in(1/2, \, 1/2+\gamma/2)$, then  there exists a constant $\kappa_{14}\in(0,\infty)$    
 such that  for every $t>0$,
    \begin{align}\label{eq LIL21b}
   \limsup_{r\rightarrow0+} \sup_{ |h|\le r}\frac{ |X'(t+h)-X'(t)|} {r^{\alpha-1/2}\sqrt{\ln \ln 1/r}}=\kappa_{14} t^{-\gamma/2}, 
   \  \ \ \ \text{a.s.}
   \end{align}     
\end{itemize}
\end{theorem}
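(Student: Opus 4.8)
\noindent\emph{Sketch of proof.} The plan is to transport the problem, via Lamperti's transformation, to a law of the iterated logarithm for the \emph{stationary} Gaussian process $U$ constructed in Section~\ref{sec:Lamperti}, and then to apply Theorem~5.1 of \cite{MWX}. Recall that $U(s)=e^{-Hs}X(e^{s})$ is strictly stationary as a consequence of the $H$-self-similarity of $X$ (no stationarity of the increments of $X$ is required here). Fix $t>0$ and put $s_{0}=\ln t$. Writing $\tau=\ln(1+h/t)$, so that $\tau\sim h/t$ as $h\to0$, the definition of $U$ gives the exact identity
\[
X(t+h)-X(t)=t^{H}\bigl(U(s_{0}+\tau)-U(s_{0})\bigr)+\bigl((t+h)^{H}-t^{H}\bigr)U(s_{0}+\tau).
\]
The second term is $O(h)$ uniformly on compacts (because $U$ has a.s.\ locally bounded sample paths), hence negligible relative to the first term, which is of order $h^{\alpha+1/2}$ in parts (a)--(b). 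Taking the supremum over $|h|\le r$ and setting $\rho=\ln(1+r/t)\sim r/t$, the problem reduces to the small-oscillation behaviour of $U$ at the single point $s_{0}$; the prefactor $t^{H}$ together with the change of scale $\rho\sim r/t$ produces the power $t^{H-(\alpha+1/2)}=t^{-\gamma/2}$, which is exactly the $t$-dependence asserted in \eqref{eq LIL21}--\eqref{eq LIL21b}.

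The second step is to verify the hypotheses of \cite[Theorem~5.1]{MWX} for $U$ and to read off the constant. By stationarity, $\sigma_{U}^{2}(\tau):=\Var\bigl(U(s+\tau)-U(s)\bigr)$ depends only on $\tau$, and Lemma~\ref{Lem:VarioZ} provides its precise behaviour as $\tau\to0$: in the regime of part (a) one has $\sigma_{U}^{2}(\tau)\sim c\,\tau^{2\alpha+1}$, while in part (b) ($\alpha=1/2$) one has $\sigma_{U}^{2}(\tau)\sim c\,\tau^{2}\ln(1/\tau)$; this regular variation is what normalizes the maximal increment. The strong local nondeterminism of $U$ supplied by Proposition~\ref{lem SLND} furnishes the decorrelation estimate needed for the lower bound. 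Feeding these two inputs into \cite[Theorem~5.1]{MWX} yields
\[
\limsup_{\rho\to0+}\ \sup_{|\tau|\le\rho}\frac{|U(s_{0}+\tau)-U(s_{0})|}{\sigma_{U}(\rho)\sqrt{\ln\ln(1/\rho)}}=\kappa \qquad\text{a.s.},
\]
with a constant $\kappa\in(0,\infty)$ that, again by stationarity, is independent of $s_{0}$. Combining this with the identity above and $\sigma_{U}(\rho)\sim\sqrt{c}\,\rho^{\alpha+1/2}$ (resp.\ $\sqrt{c}\,\rho\sqrt{\ln(1/\rho)}$ when $\alpha=1/2$) gives parts (a) and (b), the constants $\kappa_{12},\kappa_{13}$ being the product of $\kappa$ with the square root of the leading coefficient in $\sigma_{U}^{2}$, after absorbing $\ln\ln(t/r)\sim\ln\ln(1/r)$ and $\ln(t/r)\sim\ln(1/r)$.

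For part (c) I would run the same scheme on the derivative. Under the hypothesis $\alpha\in(1/2,1/2+\gamma/2)$, Theorem~\ref{thm uniform}(c) guarantees that $X$ has a $C^{1}$ modification, and differentiating the self-similarity relation $X(ct)\overset{d}{=}c^{H}X(t)$ shows that $X'$ is $(H-1)$-self-similar; its Lamperti transform $V(s)=e^{-(H-1)s}X'(e^{s})$ is again stationary, with increment variance $\sigma_{V}^{2}(\tau)\sim c'\,\tau^{2\alpha-1}$ by Lemma~\ref{Lem:VarioZ}. The same bookkeeping produces the exponent $t^{(H-1)-(\alpha-1/2)}=t^{-\gamma/2}$, and \cite[Theorem~5.1]{MWX} applied to $V$ yields \eqref{eq LIL21b}.

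The main obstacle is the rigorous passage between $U$ (resp.\ $V$) and $X$ (resp.\ $X'$) together with the clean verification that the two local inputs---the regular variation of $\sigma_{U}$ from Lemma~\ref{Lem:VarioZ} and the strong local nondeterminism from Proposition~\ref{lem SLND}---meet the precise quantitative assumptions of \cite[Theorem~5.1]{MWX}. Two points require care: controlling the lower-order term $\bigl((t+h)^{H}-t^{H}\bigr)U(s_{0}+\tau)$ uniformly in $|h|\le r$ so that it does not perturb the $\limsup$, and, in part (b), tracking the logarithmic correction in $\sigma_{U}^{2}$ through both the normalization and the decorrelation estimate (the case where the matching upper and lower variance bounds are most delicate). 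Once these are in place, the constancy of $\kappa$ in $s_{0}$ guaranteed by stationarity is precisely what makes $\kappa_{12},\kappa_{13},\kappa_{14}$ genuine constants independent of $t$.
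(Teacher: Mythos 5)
Your proposal is correct and follows essentially the same route as the paper: Lamperti's transformation to a stationary Gaussian process, negligibility of the cross term $\bigl((t+h)^{H}-t^{H}\bigr)U(\cdot)$ against the normalizer, and \cite[Theorem 5.1]{MWX} applied with the increment-variance asymptotics stemming from Lemma \ref{Lem:VarioZ} (the paper first discards the smooth component $Y$ via Proposition \ref{prop Y} and Lamperti-transforms $Z$ rather than $X$ itself, which is only a cosmetic difference, and it notes that the $\alpha=1/2$ case is not literally covered by \cite[Theorem 5.1]{MWX} but follows from an examination of its proof). One small inaccuracy to flag: the paper verifies the hypotheses of \cite[Theorem 5.1]{MWX} from the two-sided variance bounds of Lemma \ref{lem r} alone, so your appeal to the strong local nondeterminism of Proposition \ref{lem SLND} (which is proved for $Z$, not for its Lamperti transform) is unnecessary here --- in the paper SLND serves the uniform modulus of continuity and the small ball/Chung's LIL arguments instead.
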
    

In order to prove the theorems stated above, we consider the following decomposition of $X(t)$ for all $t \ge 0$:
 \begin{equation}\label{eq decom}
 \begin{split}
  X(t) &=\int_{-\infty}^0  \left((t-u)^{\alpha}-(-u)^{\alpha} \right) (-u)^{-\gamma/2} B(du)  
  +\int_0^t  (t-u)^{\alpha}   u^{-\gamma/2} B(du)\\
  &=: Y(t) + Z(t).
\end{split}
\end{equation}
Then the two processes  $Y=\{Y(t)\}_{t\ge0}$ and  $Z=\{Z(t)\}_{ t\ge0}$ are independent.  The process 
$Z$ in \eqref{eq decom} is well defined for $\alpha > -1/2$ and $\gamma \in (0, 1)$ and is called a 
{\it generalized Riemann-Liouville FBM}, following the terminology of Ichiba, Pang and Taqqu \cite{IPT2020}. 
Notice that the range of the parameter $\alpha$ for $Z$ is wider than that in  \eqref{eq constant}.  As in 
\cite[Remark 5.1]{PT2019}, one can verify that $Z$ is a self-similar Gaussian process with Hurst index 
$H=\alpha-\frac{\gamma}{2}+\frac12$ which is negative if $\alpha \in (-1/2,\, -1/2+\gamma/2)$. It follows 
from Lemma  \ref{Lem:VarioZ}   below that $Z$ has a modification whose sample 
function is continuous on $(0, \infty)$ a.s. In Section 2, we will prove that $Y$  has a modification that is 
continuously differentiable of all orders in $(0,\infty)$. Hence, in order to study the regularity properties 
of $X$, we only need to study in detail the regularity properties of  the sample path of $Z$.  

Intuitively, if $u\in [a,b]\subset (0,\infty)$,  the perturbation of $u^{-\gamma/2}$ is bounded and  it  does not 
deeply affect the sample path properties of $Z(t)$. Consequently, the process $Z$ shares many regularity 
properties of the following process:
\begin{align}\label{eq RLFBM}
  Z^{\alpha, 0}(t):=\int_0^t  (t-u)^{\alpha} B(du),\  \ \ \alpha>-1/2,
\end{align}
which is the {\it Riemann-Liouville FBM} introduced by L\'evy \cite{L53}, see also Mandelbrot and Van Ness 
\cite{MN1968}, Marinucci and Robinson \cite{MR99} for further information. When $\alpha
\ge 1$ is a positive integer, then $Z^{\alpha, 0}$ is, up to a constant factor, an $\alpha$-fold primitive of 
Brownian motion and its precise local asymptotic properties were studied  by Lachal \cite{Lachal97}.
 
Theorems \ref{thm LIL0}-\ref{thm LIL2} demonstrate that, since GFBM $X$ has non-stationary increments, 
the local oscillation properties of $X$ near a point $t\in (0,\infty)$ are location dependent. As suggested by 
one of the referees,\footnote{We thank the referee for raising this interesting question.}  another way to study 
the local structure of $X$ near $t$ is to determine the limit (in the sense of all the finite dimensional distributions 
or in the sense of weak convergence) of the following sequence of scaled enlargements of $X$ around $t$:  
 \begin{align}\label{eq ssl}
 \bigg\{\frac{X(t+r_n\tau)-X(t)}{c_n} \bigg\}_{\tau \ge 0},
 \end{align}
where $ \{r_n\}$ and $ \{c_n\}$ are sequences of real numbers such that $r_n\searrow0$ and
$c_n\searrow0$. The (small scale) limiting process of \eqref{eq ssl}, when it exists, is called a tangent process  
of $X$ at $t$ by Falconer \cite{Fal02, Fal03}. If the limit in \eqref{eq ssl} exists for $c_n = r_n^\chi$ for some 
constant $\chi \in (0, 1]$ which may depend on $t$, one also says that $X$ is weakly locally asymptotically self-similar 
of order $\chi$ at $t$ (cf.  \cite{BJR97}). Tangent processes of stochastic processes and random fields are 
useful in both theoretical and applications. We refer to \cite{Fal02, Fal03} for a general framework  of tangent 
processes and to \cite{BS11,BS13} for some statistical applications. Recently, Skorniakov \cite{Sko19} provided 
sufficient and necessary conditions for a class of self-similar Gaussian processes to admit a unique tangent process
at any fixed point $t>0$. The theorems in \cite{Sko19} are applicable to the Riemann-Liouville 
FBM,  sub-fractional Brownian motion, and bi-fractional Brownian motion. 
However,  it is not obvious to verify that GFBM $X$ satisfies the conditions of Theorem 2.1 of 
Skorniakov \cite{Sko19}.  
In the following we provide some results on the tangent processes of GFBM $X$ by using direct arguments.

For any fixed $t>0,\, u\ge0$ and a constant $\chi \in (0, 1]$, define the scaled process $\big\{V^{t,u}(\tau)\big\}_{\tau\ge0}$ 
around $t$ by 
 \begin{equation}\label{eq V}
 V^{t,u}(\tau):=\frac{ X(t+u\tau)-X(t)}{u^\chi}, \ \ \ \tau\ge0.
 \end{equation} 
 \begin{itemize}
 \item[(a).]
 If $t = 0$, then, by the self-similarity of $X$, we take $\chi = H$ and see that the corresponding tangent process 
 is $X$ itself.
 \item[(b).] If $t > 0$, then the choice of $\chi$ depends on the parameter $\alpha$.
 \begin{itemize}
 \item[(b1).]
  If $\alpha\in (1/2, 1/2+\gamma/2)$, 
 then by the differentiability of $X$ (see also \cite[Example 2]{Fal02}), we can take $\chi = 1$ and derive that, 
 for any $t > 0$, 
 $$\big\{V^{t,u}(\tau)\big\}_{\tau\ge0} \text{ converges in distribution to }\ \big\{c_{1,1} t^{H-1} B^1(\tau)\big\}_{\tau\ge0}$$ 
 in $C(\mathbb R_+, \mathbb R)$ (the  space of continuous functions from $\mathbb R_+$ to $\mathbb R$), 
 as $u\rightarrow 0+$, where  $B^1(\tau)=\tau\mathcal N$ with $\mathcal N$ 
 being  a standard  Gaussian random variable and $$c_{1,1}=\alpha\left(\int_{-\infty}^{t}(t-u)^{2\alpha-2}|u|^{-\gamma}du\right)^{1/2}.$$  
 \item[(b2).]
 If $\alpha\in (-1/2+\gamma/2,1/2)$, then it follows  from the decomposition \eqref{eq decom} and the 
differentiability of $Y$  (see Proposition \ref{prop Y} below) that the weak limit of $\big\{V^{t,u}(\tau)\big\}_{\tau\ge0}$ as 
$u\rightarrow 0+$ is the same as the analogous scaled process for the generalized Riemann-Liouville FBM $Z$ (See Proposition \ref{Prop tangent Z} below).  
More precisely, we obtain the   result in Theorem \ref{thm tangent} below. 
\end{itemize}
\end{itemize}

 \begin{theorem}\label{thm tangent} Assume $\alpha\in (-1/2+\gamma/2, 1/2), t>0$ and  $\chi = \alpha + \frac 1 2$ in (\ref{eq V}).
   Then the process $\big\{V^{t,u}(\tau)\big\}_{\tau\ge0}$  defined by (\ref{eq V}) converges in distribution to
    $\big\{  \kappa_{15} t^{-\gamma/2} B^{\alpha+1/2}(\tau)\big\}_{\tau\ge 0}$ in $C(\mathbb R_+,\mathbb R)$,   as $u\rightarrow0+$.
 Here $B^{\alpha+1/2}$ is a FBM with index $\alpha+1/2$, and  $$\kappa_{15}=\left( \frac{1}{2\alpha+1}
 +\int_0^{\infty} \big[(1+v)^{\alpha}-v^{\alpha}\big]^2dv\right)^{1/2}.$$
 \end{theorem}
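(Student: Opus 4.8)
The plan is to exploit the decomposition $X=Y+Z$ from \eqref{eq decom}, showing that only the generalized Riemann--Liouville part $Z$ contributes to the tangent process, and then to identify the small-scale limit of $Z$ near $t$ as a scaled fractional Brownian motion. Since every process involved is centered Gaussian, convergence in distribution in $C(\mathbb{R}_+,\mathbb{R})$ is equivalent to (i) convergence of the finite-dimensional distributions, i.e. of the covariance functions, together with (ii) tightness, and I would organize the argument around these two ingredients. First I would dispose of $Y$: by Proposition \ref{prop Y}, $Y$ has a smooth modification on $(0,\infty)$, so for fixed $t>0$ a first-order Taylor expansion gives $Y(t+u\tau)-Y(t)=Y'(t)\,u\tau+o(u)$ uniformly for $\tau$ in a compact set. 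Dividing by $u^{\alpha+1/2}$ and using $\alpha+\tfrac12<1$, the rescaled $Y$-increment tends to $0$ uniformly on compacts, a.s.; since $Y$ and $Z$ are independent, the limit of $\{V^{t,u}(\tau)\}$ coincides with that of the rescaled process $\{(Z(t+u\tau)-Z(t))/u^{\alpha+1/2}\}$, and I may work with $Z$ alone.

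For the finite-dimensional distributions, because a centered Gaussian process vanishing at $\tau=0$ is determined by its increment variances through polarization, it suffices to show, for all $\sigma,\tau\ge 0$,
\[
\lim_{u\to 0+}\frac{1}{u^{2\alpha+1}}\,\mathbb{E}\big[(Z(t+u\tau)-Z(t+u\sigma))^2\big]=\kappa_{15}^2\,t^{-\gamma}\,|\tau-\sigma|^{2\alpha+1}.
\]
By the isometry for the Gaussian random measure, $\mathbb{E}[(Z(t'+h)-Z(t'))^2]$ splits as $\int_0^{t'}[(t'+h-s)^\alpha-(t'-s)^\alpha]^2 s^{-\gamma}\,ds+\int_{t'}^{t'+h}(t'+h-s)^{2\alpha}s^{-\gamma}\,ds$. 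Substituting $t'-s=hv$ in the first integral and $s=t'+hw$ in the second, and factoring out $h^{2\alpha+1}$, the integrands converge pointwise (with $s^{-\gamma}\to (t')^{-\gamma}$) to $[(1+v)^\alpha-v^\alpha]^2$ and $(1-w)^{2\alpha}$, giving $\mathbb{E}[(Z(t'+h)-Z(t'))^2]\sim h^{2\alpha+1}(t')^{-\gamma}\kappa_{15}^2$ with $\kappa_{15}^2=\frac{1}{2\alpha+1}+\int_0^\infty[(1+v)^\alpha-v^\alpha]^2\,dv$, where $\int_0^1(1-w)^{2\alpha}dw=\frac{1}{2\alpha+1}$. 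Applying this with base point $t'=t+u\sigma\to t$ and increment $h=u(\tau-\sigma)$ yields the displayed limit, and a centered Gaussian process with this increment variance is exactly $\kappa_{15}t^{-\gamma/2}B^{\alpha+1/2}$.

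For tightness I would invoke the variogram estimate of Lemma \ref{Lem:VarioZ}: on the compact interval $[t/2,2t]$, which contains $t+u\tau$ for all small $u$ and bounded $\tau$, one has $\mathbb{E}[(Z(s_1)-Z(s_2))^2]\le C_t|s_1-s_2|^{2\alpha+1}$, so after rescaling $u^{-(2\alpha+1)}\mathbb{E}[(V^{t,u}_Z(\tau)-V^{t,u}_Z(\sigma))^2]\le C_t|\tau-\sigma|^{2\alpha+1}$ uniformly in small $u$. Since the rescaled increments are Gaussian, $\mathbb{E}|V^{t,u}_Z(\tau)-V^{t,u}_Z(\sigma)|^p\le c_p C_t^{p/2}|\tau-\sigma|^{p(\alpha+1/2)}$, and choosing $p>1/(\alpha+\tfrac12)$ makes the exponent exceed $1$, so the Kolmogorov--Chentsov criterion gives tightness in $C([0,N],\mathbb{R})$ for every $N$. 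Combining tightness with the finite-dimensional convergence establishes convergence in distribution in $C(\mathbb{R}_+,\mathbb{R})$, and adding back the vanishing $Y$-contribution finishes the proof.

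The main obstacle is justifying the pointwise limit in the first integral: after $t'-s=hv$ the domain becomes $v\in(0,t'/h)$ and the weight $(t'-hv)^{-\gamma}$ is unbounded as $v\uparrow t'/h$ (i.e. $s\downarrow 0$), so dominated convergence cannot be applied directly. I would split at a large fixed $M$: on $\{v\le M\}$ dominated convergence applies since $[(1+v)^\alpha-v^\alpha]^2$ is integrable and $(t'-hv)^{-\gamma}\to(t')^{-\gamma}$; on the tail $\{M<v<t'/h\}$ I would return to the $s$-variable, use the mean-value bound $|(t'+h-s)^\alpha-(t'-s)^\alpha|\le|\alpha|\,h\,(t'-s)^{\alpha-1}$ together with $\int_0 s^{-\gamma}\,ds<\infty$ (as $\gamma<1$), and check that the tail contributes at most $C\,M^{2\alpha-1}h^{2\alpha+1}$ (plus a term of order $h^{1-2\alpha}\cdot h^{2\alpha+1}$). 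Since $2\alpha-1<0$ and $1-2\alpha>0$, these vanish as $h\to0$ and then $M\to\infty$, closing the argument. The roles of the standing assumptions are now transparent: $\alpha<\tfrac12$ guarantees both the convergence $\int_0^\infty[(1+v)^\alpha-v^\alpha]^2\,dv<\infty$ and the vanishing tail, while $\gamma<1$ guarantees integrability of $s^{-\gamma}$ near the origin.
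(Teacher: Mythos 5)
Your proposal is correct and follows essentially the same route as the paper: the decomposition $X=Y+Z$ with the smooth part $Y$ killed by Proposition \ref{prop Y} (the paper's \eqref{eq U}), the increment-variance asymptotics $\mathbb{E}[(Z(t'+h)-Z(t'))^2]\sim \kappa_{15}^2 (t')^{-\gamma}h^{2\alpha+1}$ which is exactly Lemma \ref{Lem:VarioZ2} (including your split of the singular integral, which mirrors the paper's $J_{1,1}/J_{1,2}$ decomposition at $v=s/(2(t-s))$ rather than at a fixed level $M$), and tightness from the uniform bound of Lemma \ref{Lem:VarioZ} — the only cosmetic difference being that you apply Kolmogorov--Chentsov directly where the paper cites \cite[Proposition 4.1]{Fal02}, and you phrase the finite-dimensional convergence via polarization of increment variances where the paper writes the covariance expansion \eqref{eq sim t-s}.
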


The rest of the paper is organized as follows. In Section 2 we  prove that sample paths of  the process $Y$ 
are smooth in $(0, \infty)$ almost surely.  From Section 3 to Section 6, we focus on the generalized 
Riemann-Liouville FBM $Z$.  More precisely,  we give estimates on the moment of increments,    
prove the existence of the tangent process  and establish 
the one-sided strong local nondeterminism  of $Z$ in Section 3; study the Lamperti transformation of $Z$ and 
give some  spectral density estimates in Section 4;  determine the small ball probabilities for $Z$ in Section 5; 
and prove a Chung's LIL for $Z$ in Section 6.  In Section 7, we prove the main theorems for GFBM $X$.

 \section{Sample path properties of  $Y$}   
 
In this section, we consider the process $Y= \{Y(t)\}_{ t \ge 0}$ defined in   (\ref{eq decom}), namely,  
  \begin{align}\label{eq Y}Y(t) = \int_{-\infty}^0  \left((t-u)^{\alpha}-(-u)^{\alpha} \right) (-u)^{-\gamma/2} B(du),
  \end{align} 
and show that its sample function is smooth away from the origin.   
 
   \begin{lemma}\label{lem Y moment}
   Assume $-1/2+\gamma/2<\alpha<1/2+\gamma/2$. Then    for all $0 < s < t<\infty$, 
   \begin{align}\label{Eq: Ymoment1}
c_{2,1}\frac{|t-s|^2}{t^{2-2H}}\le     \mathbb E\left[\left(Y(t)-Y(s) \right)^2\right]\le c_{2,1}\frac{|t-s|^2}{s^{2-2H}},
 \end{align}  
 where      $c_{2,1}= \alpha^2\int_0^{\infty}(1+u)^{2\alpha-2}u^{-\gamma}du$.
 
 \end{lemma}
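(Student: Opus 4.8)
The plan is to reduce the $L^2$-norm of the increment to a deterministic integral via the Wiener integral isometry, and then to sandwich that integral by elementary monotonicity and a scaling substitution. First I would observe that the term $(-u)^\alpha$ cancels in the difference, so that
\[
Y(t)-Y(s)=\int_{-\infty}^0\left[(t-u)^{\alpha}-(s-u)^{\alpha}\right](-u)^{-\gamma/2}B(du),
\]
and the isometry for Wiener integrals against $B(du)$ (which has Lebesgue control measure) gives
\[
\mathbb E\left[(Y(t)-Y(s))^2\right]=\int_{-\infty}^0\left[(t-u)^{\alpha}-(s-u)^{\alpha}\right]^2(-u)^{-\gamma}du.
\]
The substitution $v=-u$ then turns this into $\int_0^\infty\left[(t+v)^{\alpha}-(s+v)^{\alpha}\right]^2 v^{-\gamma}dv$, an integral over the positive half-line that is amenable to direct estimation.

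Second, I would control the integrand pointwise in $v$. Writing the difference as $(t+v)^{\alpha}-(s+v)^{\alpha}=\alpha\int_s^t(\theta+v)^{\alpha-1}d\theta$ and then squaring avoids any issue with the sign of $\alpha$. Since $\alpha<1/2+\gamma/2<1$, the exponent $\alpha-1$ is negative, so $\theta\mapsto(\theta+v)^{\alpha-1}$ is positive and decreasing on $[s,t]$; hence $(t-s)(t+v)^{\alpha-1}\le\int_s^t(\theta+v)^{\alpha-1}d\theta\le(t-s)(s+v)^{\alpha-1}$. Squaring and multiplying by $\alpha^2$ yields
\[
\alpha^2(t-s)^2(t+v)^{2\alpha-2}\le\left[(t+v)^{\alpha}-(s+v)^{\alpha}\right]^2\le\alpha^2(t-s)^2(s+v)^{2\alpha-2}.
\]

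Third, I would integrate these bounds against $v^{-\gamma}dv$ and evaluate the two extreme integrals by the scaling $v=au$ with $a\in\{s,t\}$, which gives
\[
\int_0^\infty(a+v)^{2\alpha-2}v^{-\gamma}dv=a^{2\alpha-1-\gamma}\int_0^\infty(1+u)^{2\alpha-2}u^{-\gamma}du=a^{2H-2}\,\alpha^{-2}c_{2,1},
\]
using the identity $2\alpha-1-\gamma=2H-2$. Combining this with the pointwise bounds produces exactly $c_{2,1}(t-s)^2t^{2H-2}\le\mathbb E[(Y(t)-Y(s))^2]\le c_{2,1}(t-s)^2s^{2H-2}$, which, after rewriting $a^{2H-2}=a^{-(2-2H)}$, is the asserted inequality.

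I do not expect a genuine obstacle here; the only points requiring care are \emph{(i)} justifying that the defining integral for $c_{2,1}$ converges, which is exactly what pins down the admissible parameter range, and \emph{(ii)} handling the sign of $\alpha$. For convergence, near $v=0$ the factor $u^{-\gamma}$ is integrable precisely because $\gamma<1$, while near $v=\infty$ the integrand decays like $u^{2\alpha-2-\gamma}$, which is integrable precisely because $\alpha<1/2+\gamma/2$; thus the hypotheses of the lemma are exactly what is needed. The sign issue is dispatched by working with the squared integral representation throughout, so that the factor $\alpha^2\ge0$ is nonnegative and the monotonicity argument never needs to know the sign of $\alpha$ itself.
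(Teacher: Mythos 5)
Your proof is correct and follows essentially the same route as the paper: the paper also reduces to $\int_0^\infty[(t+u)^\alpha-(s+u)^\alpha]^2u^{-\gamma}\,du$ via the isometry, applies the mean value theorem (in Lagrange form with an intermediate point $\theta(u)$, where you use the integral form $\alpha\int_s^t(\theta+v)^{\alpha-1}d\theta$, a cosmetic difference), exploits that $\alpha-1<0$ to sandwich between the endpoint values, and concludes by the same scaling substitution using $2\alpha-1-\gamma=2H-2$. The only nitpick is in your final remark: the lower constraint $\alpha>-1/2+\gamma/2$ is not needed for the convergence of $c_{2,1}$ (only $\gamma<1$ and $\alpha<1/2+\gamma/2$ are), but rather for the square-integrability of the integrand defining $Y$ itself near $u=0$ when $\alpha<0$.
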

    \begin{proof} For   any  $0 < s < t<\infty$,   by Taylor's formula,
     there exists a function $\{\theta(u)\}_{ u\in (0,\infty)}$ 
   valued in $(0,1)$ such that for any $u>0$,
 \begin{equation}\label{eq Taylor1}
 \begin{split}
(t+u)^{\alpha}-(s+u)^{\alpha}=&\alpha\left[s+u+\theta(u)(t-s)\right]^{\alpha-1} (t-s).
 \end{split}
 \end{equation}
           Thus, we have 
     \begin{equation*} 
     \begin{split}
   \mathbb E\left[\left(Y(t)-Y(s) \right)^2\right]=&\int_0^{\infty} \left[(t+u)^{\alpha}-(s+u)^{\alpha}  \right]^2u^{-\gamma}du \\
       \ge &\,\alpha^2(t-s)^2 \int_0^{\infty}(t+u)^{2(\alpha-1)} u^{-\gamma}du  \notag\\
   =&\, \alpha^2(t-s)^2 t^{2\alpha-\gamma-1} \int_0^{\infty} (1+v)^{2\alpha-2}v^{-\gamma}dv \\
   =&\, c_{2,1} \frac{  |t-s|^2}{t^{2-2H}}.
   \end{split}
     \end{equation*} 
 Similarly, we can prove that 
  \begin{align*} 
   \mathbb E\left[\left(Y(t)-Y(s) \right)^2\right]\le c_{2,1}\frac{ |t-s|^2}{s^{2-2H}}.
     \end{align*} 
     This proves \eqref{Eq: Ymoment1}.      
     \end{proof}

By Lemma \ref{lem Y moment},  the Gaussian property of $Y$, and the Kolmogorov continuity theorem 
(see, e.g.,  \cite[Theorem C.6]{K}),  we know that, for any $\varepsilon>0$,  $Y$ has a modification that is 
H\"older continuous with index $1-\varepsilon$ on any   interval $[a, b]$ with $0<a<b$. We will apply this fact in Section 7 
to prove Theorems \ref{thm uniform},  \ref{thm LIL0}-\ref{thm tangent}, from the results on the generalized Riemann-Liouville FBM $Z$.

In the following, we prove the differentiability of $Y$ by using the argument in the proof  of Lemma 3.6 in \cite{K}.  
 \begin{proposition}\label{prop Y}
 Assume $-1/2+\gamma/2<\alpha<1/2+\gamma/2$. The Gaussian process $Y=\{Y(t)\}_{t\ge0}$ 
 has a modification (still denoted by $Y$) that is infinitely  differentiable in $(0,\infty)$.  
 \end{proposition}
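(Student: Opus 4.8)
The plan is to realize $Y$ and all of its formal derivatives as Wiener integrals over $(0,\infty)$ and then to transfer smoothness from the (deterministic) kernel to the sample path via Kolmogorov's continuity theorem and a stochastic Fubini argument. After the substitution $u\mapsto -u$, write
\[
 Y(t)=\int_0^\infty \bigl((t+w)^\alpha-w^\alpha\bigr)\,w^{-\gamma/2}\,B(dw),
\]
and for each integer $k\ge 1$ define the candidate $k$-th derivative
\[
 Y^{(k)}(t):=c_k\int_0^\infty (t+w)^{\alpha-k}\,w^{-\gamma/2}\,B(dw),\qquad c_k:=\alpha(\alpha-1)\cdots(\alpha-k+1),
\]
whose kernel $g_k(t,w):=c_k(t+w)^{\alpha-k}w^{-\gamma/2}$ is obtained by differentiating the kernel of $Y$ under the integral sign $k$ times. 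The first step is to check that $g_k(t,\cdot)\in L^2((0,\infty),dw)$ for each fixed $t>0$. Near $w=0$ the integrand of $\int_0^\infty g_k(t,w)^2\,dw$ behaves like $t^{2(\alpha-k)}w^{-\gamma}$, which is integrable because $\gamma\in[0,1)$; near $w=\infty$ it behaves like $w^{2\alpha-2k-\gamma}$, which is integrable because $2\alpha-\gamma<1\le 2k-1$ by the standing assumption $\alpha<1/2+\gamma/2$. Hence each $Y^{(k)}$ is a well-defined centered Gaussian process.

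The second step is to produce continuous modifications of the $Y^{(k)}$ and to relate consecutive ones by integration. Fix $0<a<b<\infty$. Since $\partial_t g_k=g_{k+1}$, the mean value theorem gives $|g_k(t,w)-g_k(s,w)|\le |t-s|\sup_{\xi\in[a,b]}|g_{k+1}(\xi,w)|$, and because $\alpha-k-1<0$ the supremum is attained at $\xi=a$; by the Wiener isometry this yields
\[
 \mathbb{E}\bigl[(Y^{(k)}(t)-Y^{(k)}(s))^2\bigr]=\int_0^\infty (g_k(t,w)-g_k(s,w))^2\,dw\le C_{k,a}\,|t-s|^2
\]
for all $s,t\in[a,b]$, with $C_{k,a}=c_{k+1}^2\int_0^\infty (a+w)^{2(\alpha-k-1)}w^{-\gamma}\,dw<\infty$. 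By the Gaussian property and Kolmogorov's continuity theorem (\cite[Theorem C.6]{K}) each $Y^{(k)}$, including $Y^{(0)}:=Y$, admits a modification that is continuous (indeed H\"older of every order $<1$) on $(0,\infty)$; I keep these continuous versions. Next, since $s\mapsto \|g_k(s,\cdot)\|_{L^2(dw)}$ is bounded on $[a,b]$, the stochastic Fubini theorem for Wiener integrals applies and, using $\int_a^t g_k(s,w)\,ds=g_{k-1}(t,w)-g_{k-1}(a,w)$, gives the almost sure identity
\[
 \int_a^t Y^{(k)}(s)\,ds=\int_0^\infty\Bigl(\int_a^t g_k(s,w)\,ds\Bigr)B(dw)=Y^{(k-1)}(t)-Y^{(k-1)}(a).
\]

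The final step is to conclude by induction. For fixed $a$ the two sides of the displayed identity are continuous in $t$, so the identity holds simultaneously for all $t$ on a single almost sure event; as $Y^{(k)}$ is continuous, the fundamental theorem of calculus shows that (the continuous modification of) $Y^{(k-1)}$ is differentiable on $[a,b]$ with derivative $Y^{(k)}$. Intersecting the relevant null sets over all $k\ge 1$ and over the exhaustion $[a,b]=[1/n,n]$, $n\ge 1$, leaves a single almost sure event on which $Y$ is infinitely differentiable on $(0,\infty)$ with $k$-th derivative $Y^{(k)}$, which is the assertion. The only delicate points, rather than genuine obstacles, are the justification of the stochastic Fubini interchange (guaranteed by the uniform $L^2$ bound on the kernels just established) and the bookkeeping needed to merge the countably many exceptional null sets into one; the integrability computations are routine given the constraint $\alpha<1/2+\gamma/2$.
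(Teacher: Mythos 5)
Your proposal is correct and follows essentially the same route as the paper: differentiate the kernel to define candidate derivative processes, check square-integrability, prove the Lipschitz second-moment bound via the mean value theorem (the paper's Taylor-formula estimate), apply Kolmogorov's continuity theorem, and use the stochastic Fubini theorem to identify $Y^{(k)}$ as the derivative of $Y^{(k-1)}$. The only cosmetic difference is in the identification step — you prove the antiderivative identity $\int_a^t Y^{(k)}(s)\,ds = Y^{(k-1)}(t)-Y^{(k-1)}(a)$ and invoke the fundamental theorem of calculus, whereas the paper tests against $\phi\in C_c^{\infty}((0,\infty))$ to show $Y'$ is the weak (hence, by continuity, ordinary) derivative — and your explicit treatment of all orders $k$ usefully makes concrete the induction the paper states only briefly.
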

 \begin{proof} 

For any $t>0$, define 
   \begin{equation}\label{Eq:Y'}
   Y'(t):=\alpha \int_{-\infty}^0 (t-u)^{\alpha-1}(-u)^{-\frac{\gamma}{2}}B(du).
   \end{equation}
The integrand is in $L^2((-\infty,0);\mathbb R)$, and hence $\{Y'(t)\}_{ t>0}$ is a well-defined mean-zero Gaussian process. 
For every $s, t\in [a,b]\subset(0,\infty)$ with $s<t$,  applying Taylor's formula \eqref{eq Taylor1}, we have 
\begin{equation}
\begin{split}
 &\mathbb E\left[ \left| Y'(t)- Y'(s)    \right|^2  \right] \\
  =\,& \alpha^2 \int_0^{\infty} \left| (t+u)^{\alpha-1}-(s+u)^{\alpha-1}  \right|^2 u^{-\gamma}du \\ 
    \le \,&   \alpha^2(\alpha-1)^2 \left[ \int_0^1 s^{2\alpha-4} u^{-\gamma}du+\int_1^{\infty} u^{2\alpha-4-\gamma}du \right]\cdot |t-s|^2  \\ 
  \le \,&\alpha^2(\alpha-1)^2 \left(\frac{a^{2\alpha-4}}{1-\gamma}-\frac{1}{2\alpha-3-\gamma}   \right) |t-s|^2.
\end{split}
\end{equation}
This, together with the Kolmogorov continuity theorem and the arbitrariness of $a$ and $b$,    
implies that $Y'$ is continuous in  $(0,\infty)$ up to a modification.
      
Assume that $\phi\in C_c^{\infty}((0,\infty))$ (the space of all  infinitely differentiable functions with compact supports). 
By the stochastic Fubini theorem \cite[Corollary 2.9]{K}  and the formula of integration by parts, we know a.s., 
\begin{align} 
\begin{split}
\int_0^{\infty} Y'(t)\phi(t)dt&=\int_{-\infty}^0 B(du)\int_0^{\infty}\frac{\partial}{\partial t}\left[((t-u)^{\alpha}-(-u)^{\alpha})
 u^{-\frac{\gamma}{2}}   \right] \phi(t)dt \\
& = -\int_{-\infty}^0 B(du)\int_0^{\infty}   \left[((t-u)^{\alpha}-(-u)^{\alpha})u^{-\frac{\gamma}{2}}   \right]  \frac{d}{dt}\phi(t)dt.
 \end{split}
 \end{align}
Applying the stochastic Fubini theorem again, we have 
   \begin{align}\label{eq diffe}
   \int_0^{\infty} Y'(t) \phi(t)dt=-\int_0^{\infty} Y(t)\frac{d}{dt}\phi(t)dt ,\ \ \  \ \ \ \text{ a.s.}
   \end{align}
This means that  $Y'(t)$ is the weak derivative of $Y(t)$ for all $t>0$. Since $Y'$ is continuous in $(0,\infty)$, 
\eqref{eq diffe} shows that $Y'$ is in fact almost surely the ordinary derivative of $Y$ in $(0,\infty)$. By induction, 
we know that $Y$ is infinitely differentiable in $(0,\infty)$.  
   \end{proof}

\section{Moment estimates for the increments and one-sided SLND  of $Z$}
   
Consider the generalized Riemann-Liouville FBM $Z= \{Z(t)\}_{t \ge 0}$ with indices $\alpha$ and $\gamma$ 
defined by 
\begin{align}\label{eq Z}
Z(t) = \int_0^t  (t-u)^{\alpha}  u^{-\gamma/2} B(du).
\end{align}
This Gaussian process is well defined if the constants $\alpha$ and $\gamma$ satisfy $\alpha > - \frac 1 2$ 
and $\gamma < 1$ and is self-similar with index $H = \alpha- \frac{\gamma}{2}+\frac1 2$. Notice that $H\le 0$ if
$-\frac12 < \alpha \le -\frac12+\frac{\gamma}2$  and 
$H > 0$ if $\alpha  > -\frac12+\frac{\gamma}2$.

In this section, we derive optimal estimates on the moment of the increments,  prove the existence of the tangent process
and establish the one-sided 
strong local nondeterminism for $Z$. These properties are useful for studying sample properties of $Z$.
  
 \subsection{Moment estimates}
In the following, Lemmas  \ref{Lem:VarioZ},  \ref{Lem:VarioZ2} and \ref{lem expect} provide 
optimal estimates on  $\mathbb E\left[ (Z(t)-Z(s))^2 \right]$. These estimates are essential for 
establishing sharp sample path properties of $Z$. 
Notice that the upper bounds in (i) and (ii) in Lemma \ref{Lem:VarioZ} below are the same (up to a 
constant factor) when $\alpha <1/2$. We will use these bounds for estimating the small ball 
probability and the uniform modulus of continuity in Sections 4 and 7.

\begin{lemma} \label{Lem:VarioZ}
Assume  $\alpha\in(-1/2\,, 1/2]$ and $\gamma \in [0, 1)$. The following statements hold:
\begin{itemize}
\item[(i).]\,    If  $0 < s < t <\infty$ and $0 < s \le 2(t-s)$, then 
\begin{equation}\label{Eq:expect10}
 c_{3,1}   \frac {|t-s|^{2\alpha+1}} {t^{\gamma} }\le \mathbb E\left[ (Z(t)-Z(s))^2 \right]    
\le   c_{3,2}  \frac {|t-s|^{2\alpha+1}} {s^{\gamma} },
 \end{equation} 
 here  $c_{3,1}=1/(2\alpha+1)$ and $c_{3,2}= 2/(1-\gamma)+1/(2\alpha+1) +\mathcal B(2\alpha+1, 1-\gamma) 2^{ 2\alpha+1} $. 
 \item[(ii).]\, If $0 < s < t < \infty$ and $s > 2(t-s)$, then 
\begin{equation}\label{Eq:expect1a}
\mathbb E\left[ (Z(t)-Z(s))^2 \right] 
 \asymp \left\{\begin{array}{ll}
 \frac{|t-s|^{2\alpha+1}} {s^{\gamma} } , \ \quad &\hbox{ if }\  \alpha < 1/2,\\ 
 \frac{(t-s)^2} {s^{\gamma}} \left(1+\ln \big| \frac s {t-s}\big|\right),  \ \quad &\hbox{ if } \ \alpha=1/2.
\end{array}
\right.
 \end{equation} 
Here and below,  for two real-valued functions $f$ and $g$ defined on a set $I$, the notation $f \asymp g$ 
means that 
$$c \le f(x)/g(x) \le c'  \ \ \hbox{ for all }\,x \in I, $$ 
for some positive and finite constants $c$ and $c'$ which may depend on $f,\, g$ and $I$.
\end{itemize}
\end{lemma}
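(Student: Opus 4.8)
The plan is to compute $\mathbb E\left[(Z(t)-Z(s))^2\right]$ exactly by the It\^o isometry and then estimate the two resulting deterministic integrals. Writing $h=t-s$ and splitting the representation \eqref{eq Z} over the disjoint sets $(0,s)$ and $(s,t)$, the independence of the Gaussian random measure $B$ on disjoint sets gives
\begin{equation*}
\mathbb E\left[(Z(t)-Z(s))^2\right]
=\underbrace{\int_0^s\left[(t-u)^{\alpha}-(s-u)^{\alpha}\right]^2u^{-\gamma}\,du}_{=:\,I_1}
+\underbrace{\int_s^t(t-u)^{2\alpha}u^{-\gamma}\,du}_{=:\,I_2}.
\end{equation*}
In $I_2$ I would substitute $v=t-u$ to obtain $I_2=\int_0^h v^{2\alpha}(t-v)^{-\gamma}\,dv$, and in $I_1$ the substitution $a=s-u$ (so that $t-u=a+h$) to obtain $I_1=\int_0^s\left[(a+h)^{\alpha}-a^{\alpha}\right]^2(s-a)^{-\gamma}\,da$. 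Throughout, the useful elementary facts are that all integrands are nonnegative and that $2\alpha+1>0$ guarantees integrability near $a=0$ and $v=0$.

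The term $I_2$ already produces the lower bounds. Since $v\in(0,h)$ forces $t-v\in(s,t)$ and $\gamma\ge0$, one has $t^{-\gamma}\le(t-v)^{-\gamma}\le s^{-\gamma}$, so $I_2$ lies between $t^{-\gamma}h^{2\alpha+1}/(2\alpha+1)$ and $s^{-\gamma}h^{2\alpha+1}/(2\alpha+1)$. In regime (i), where $0<s\le2h$, the left inequality together with $I_1\ge0$ yields the claimed lower bound with $c_{3,1}=1/(2\alpha+1)$. In regime (ii), where $s>2h$, one additionally has $t-v<3s/2$, so $(t-v)^{-\gamma}\asymp s^{-\gamma}$ and therefore $I_2\asymp s^{-\gamma}h^{2\alpha+1}$; this is the dominant contribution when $\alpha<1/2$ and it supplies the additive constant in the bracket of \eqref{Eq:expect1a} when $\alpha=1/2$.

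It remains to bound $I_1$ from above. In regime (i), where $a<s\le2h$, I would use $\left[(a+h)^{\alpha}-a^{\alpha}\right]^2\le h^{2\alpha}$ from the subadditivity of $x\mapsto x^{\alpha}$ when $\alpha\in(0,1/2]$, and $\left[(a+h)^{\alpha}-a^{\alpha}\right]^2\le a^{2\alpha}$ (since $0<(a+h)^{\alpha}<a^{\alpha}$) when $\alpha\in(-1/2,0]$; in either case the remaining integral is an explicit Beta-type integral, and the inequality $s\le2h$ converts a factor $s^{1-\gamma}$ or $s^{2\alpha+1-\gamma}$ into $h^{2\alpha+1}s^{-\gamma}$, so that summing the pieces produces exactly the constant $c_{3,2}$. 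In regime (ii), where $s>2h$, I would split $I_1=\int_0^h+\int_h^{s/2}+\int_{s/2}^s$. On $(0,h)$ the weight satisfies $(s-a)^{-\gamma}\asymp s^{-\gamma}$ and the crude bounds above give a contribution $\lesssim s^{-\gamma}h^{2\alpha+1}$. On $(h,s/2)$ and $(s/2,s)$ I would invoke the mean value theorem, $\left|(a+h)^{\alpha}-a^{\alpha}\right|\le|\alpha|\,h\,a^{\alpha-1}$, so that the middle piece is controlled by $s^{-\gamma}h^2\int_h^{s/2}a^{2\alpha-2}\,da$.

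The decisive step, and the source of the dichotomy between the two lines of \eqref{Eq:expect1a}, is precisely the integral $\int_h^{s/2}a^{2\alpha-2}\,da$. For $\alpha<1/2$ the exponent $2\alpha-2<-1$ makes it convergent at its upper end and $\asymp h^{2\alpha-1}$, so the middle piece is $\lesssim s^{-\gamma}h^{2\alpha+1}$ and $I_1$ does not overwhelm $I_2$, whence $I_1+I_2\asymp s^{-\gamma}h^{2\alpha+1}$. For $\alpha=1/2$ the exponent equals $-1$, the integral equals $\ln(s/(2h))\asymp\ln(s/h)$, which produces the logarithmic factor; a matching lower bound then comes from $I_1\ge\int_h^{s/2}\left[(a+h)^{1/2}-a^{1/2}\right]^2(s-a)^{-\gamma}\,da$ using $(a+h)^{1/2}-a^{1/2}\ge h/(2\sqrt2\,a^{1/2})$ for $a\ge h$ and $(s-a)^{-\gamma}\ge s^{-\gamma}$, giving $I_1\gtrsim s^{-\gamma}h^2\ln(s/h)$. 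I expect the endpoint bookkeeping to be the only delicate point: near $a=s$ the weight $(s-a)^{-\gamma}$ blows up but the increment is of order $h\,s^{\alpha-1}$ there, so the outer piece $(s/2,s)$ contributes $\lesssim s^{-\gamma}h^2 s^{2\alpha-1}$, which is dominated by the other terms because $(h/s)^{1-2\alpha}\le1$; near $a=0$ the factor $a^{2\alpha}$ is integrable exactly because $2\alpha+1>0$.
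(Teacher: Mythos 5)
Your proposal is correct and follows essentially the same route as the paper's proof: the It\^o-isometry decomposition into $I_1+I_2$, the case split $\alpha\ge 0$ versus $\alpha<0$ with the bounds $h^{2\alpha}$ and $a^{2\alpha}$ on the squared increment, the mean-value estimate $|(a+h)^\alpha-a^\alpha|\le|\alpha|\,h\,a^{\alpha-1}$, and the identification of $\int_h^{s/2}a^{2\alpha-2}\,da$ as the source of the logarithm at $\alpha=1/2$ all mirror the paper's argument (which works in the normalized variable $v=a/h$ and splits at $w=3/4$ rather than $a=s/2$). The only substantive, and harmless, deviation is that in regime (ii) with $\alpha<1/2$ you draw the lower bound from $I_2$ alone, whereas the paper derives a two-sided estimate $I_1\asymp s^{-\gamma}h^{2\alpha+1}$ and uses $I_1$ for the lower bound; both are valid and yield the stated constants.
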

\begin{proof} For any $0< s<t<\infty$,  we have
\begin{equation}\label{eq Zt-Zs1a}
\begin{split}
  &\mathbb E\left[ (Z(t)-Z(s))^2 \right]  \\
      = &\, \int_0^s\left((t-u)^{\alpha}-(s-u)^{\alpha}   \right)^2 u^{-\gamma}du+ \int_s^t(t-u)^{2\alpha}u^{-\gamma}du\\
    =&:\, I_1 + I_2.
 \end{split}
  \end{equation}  
  
 To bound the integral $I_1$, we make a change of variable with $u=s-(t-s)v$ to obtain
 \begin{equation} \label{Eq:I1}
 I_1  = (t-s)^{2\alpha-\gamma+1}\int_0^{\frac{s}{t-s}}\left[(1+v)^{\alpha}-v^{\alpha} \right]^2 
 \Big(\frac{s}{t-s}-v\Big)^{-\gamma}dv.
 \end{equation}  
 In order to estimate $I_1$ in Case  (i), we distinguish the two cases  $\alpha\in[0, 1/2]$ and $\alpha\in(- 1/2, 0)$. 
 
When  $\alpha\in[0, 1/2]$, we use the fact that   
 $3^\alpha - 2^\alpha \le (1+v)^{\alpha}-v^{\alpha} \le 1$ for all $v \in [0, 2]$ to derive 
 \begin{equation}\label{Eq:I1b}
 \begin{split}
 I_1\le &\,  \big(t-s\big)^{2\alpha-\gamma+1} \int_0^{\frac{s}{t-s}} \Big(\frac{s}{t-s}-v\Big)^{-\gamma}dv\\
 =&\,\frac {s^{1-\gamma}} {1 - \gamma}  \big(t-s\big)^{2\alpha}\\
 \le &\, \frac {2} { 1 - \gamma  }\frac{\big(t-s\big)^{2\alpha +1}}{s^{\gamma}} .
\end{split}
\end{equation}  

When $\alpha\in(- 1/2, 0)$, we use the fact that   $0< v^{\alpha} -(1+v)^{\alpha} <v^{\alpha} $ for all $v \in [0, 2]$ 
to derive 
 \begin{equation}\label{Eq:I1cc}
 \begin{split}
 I_1\le &\,  \big(t-s\big)^{2\alpha-\gamma+1} \int_0^{\frac{s}{t-s}}v^{2 \alpha} \Big(\frac{s}{t-s}-v\Big)^{-\gamma}dv\\
 =&\,s^{2 \alpha-\gamma+1} \int_0^1 v^{2\alpha} (1 - v)^ {- \gamma} dv\\   
 \le &\, \mathcal B(2\alpha+1, 1-\gamma) 2^{ 2\alpha+1}  \frac{\big(t-s\big)^{2\alpha +1}}{s^{\gamma}} .
\end{split}
\end{equation}  
In the above, we made a change of variable and the assumption that $s \le 2(t-s)$.

For the integral $I_2$ in \eqref{eq Zt-Zs1a}, by the change of variable $u = s + (t -s)v$, we have
\begin{equation}\label{Eq:I2a}
 \begin{split}
 I_{2} & = (t-s)^{2 \alpha   - \gamma+1} \int_0^1(1 - v)^{2 \alpha} \Big(\frac{s}{t-s} + v\Big)^{-\gamma} \,dv\\
 &\le  (t-s)^{2 \alpha   - \gamma+1} \Big(\frac{s}{t-s}\Big)^{-\gamma}\int_0^1(1 - v)^{2 \alpha} dv\\
 &= \frac{1}{2\alpha+1}\frac{(t-s)^{2 \alpha + 1}}{ s^{\gamma}}\, .
 \end{split}
\end{equation}
On the other hand, in Case (i), $\big(\frac{s}{t-s} + v\big)^{-\gamma} \ge \big(\frac{t}{t-s} \big)^{-\gamma}$ for 
all $v \in [0, 1]$. Hence,  
\begin{equation}\label{Eq:I2b}
 \begin{split}
 I_{2}  \ge  (t-s)^{2 \alpha   - \gamma+1} \Big(\frac{t}{t-s}\Big)^{-\gamma}\int_0^1(1 - v)^{2 \alpha} dv
= \frac{1}{2\alpha+1}\frac{(t-s)^{2 \alpha + 1}}{ t^{\gamma}}\, .
 \end{split}
\end{equation}
Consequently, the lower bound in (\ref{Eq:expect10}) follows from \eqref{eq Zt-Zs1a} and \eqref{Eq:I2b}, and 
the upper bound in (\ref{Eq:expect10}) follows from \eqref{eq Zt-Zs1a},  \eqref{Eq:I1b}, \eqref{Eq:I1cc}, and 
\eqref{Eq:I2a}.

Now we consider Case (ii).  Since  $s > 2(t-s)$, we write   
   \begin{equation}
 \begin{split}\label{Eq:I1c}
 I_1 &= \big(t-s \big)^{2\alpha-\gamma+1}\Bigg(\int_0^1 \left[(1+v)^{\alpha}-v^{\alpha} \right]^2
 \Big(\frac{s}{t-s}-v\Big)^{-\gamma}dv  \\
     & \ \ \ \ \ \ \  \quad \ \ \ \ \ \ \qquad \ \ \  +\int_1^{\frac{s}{t-s}}\left[(1+v)^{\alpha}-v^{\alpha} \right]^2
     \Big(\frac{s}{t-s}-v\Big)^{-\gamma}dv\Bigg)  \\
    &=: \big(t-s \big)^{2\alpha-\gamma+1}\big(I_{1,1}+I_{1,2} \big).
    \end{split}
 \end{equation}  
We will see that the main term is  the integral $I_{1,2}$. For estimating the integral $I_{1,1}$, again 
we distinguish the two cases  $\alpha\in[0, 1/2]$ and $\alpha\in(- 1/2, 0)$. 

When $\alpha\in[0, 1/2]$, we use the facts that  
$2^\alpha - 1 \le (1+v)^{\alpha}-v^{\alpha} \le 1$ for all   $ v \in [0, 1]$
and
\begin{equation}\label{Eq:I12x}
 \frac{s}{2(t-s)} \le \frac{s}{t-s}-v \le \frac{s}{t-s}, \quad \forall \, v \in [0, 1],
\end{equation}
 to derive that 
 \begin{equation}\label{Eq:I11}
 \begin{split}
 \left(2^{\alpha}-1\right)^2 \Big(\frac{t-s}{s}\Big)^{\gamma}\le I_{1,1} \le  2^\gamma \Big(\frac{t-s}{s}\Big)^{\gamma}.
 \end{split}
\end{equation}

When $\alpha\in(- 1/2, 0)$, we use the fact that $(1-2^\alpha) v^\alpha \le v^{\alpha} - (1+v)^{\alpha} 
\le v^{\alpha}$ for $v \in [0, 1]$ and (\ref{Eq:I12x}) to get 
\begin{equation}\label{Eq:I11b}
 \begin{split}
 \frac{\left(1-2^{\alpha}\right)^2}{2 \alpha +1}  \Big(\frac{t-s}{s}\Big)^{\gamma}\le I_{1,1} \le   \frac{2^\gamma} {2 \alpha +1} 
 \Big(\frac{t-s}{s}\Big)^{\gamma}.
 \end{split}
\end{equation}

Next we estimate the integral $I_{1,2}$. For $\alpha\in(- 1/2, 1/2]$,  we use the inequality $|(1+v)^{\alpha}-v^{\alpha}| 
\asymp v^{\alpha - 1}$ for all $v \in [1, \infty)$ to derive
 \begin{equation}\label{Eq:I12}
 \begin{split}
 I_{1,2} &\asymp \int_1^{\frac{s}{t-s}} v^{2(\alpha - 1)} \Big(\frac{s}{t-s}-v\Big)^{-\gamma}\, dv\\
 &=  \Big(\frac{s}{t-s}\Big)^{2 \alpha - 1 - \gamma}  \int^1_{\frac{t-s}{s}} w^{2(\alpha - 1)} \left(1-w\right)^{-\gamma}\, dw,
 \end{split}
\end{equation}
where the above equality is obtained by the change of variable $v = \frac{s}{t-s} w$. By splitting the last 
integral over the intervals $[\frac{t-s}{s}, \frac 3 4]$ and $[\frac 3 4, 1]$, we have
 \begin{equation}\label{Eq:I12b}
 \begin{split}
 I_{1,2} &\asymp \Big(\frac{s}{t-s}\Big)^{2 \alpha - 1 - \gamma} \bigg( \int^{\frac 3 4}_{\frac{t-s}{s}} w^{2(\alpha - 1)}   dw  
 +  \int^1_{\frac 3 4} w^{2(\alpha - 1)} \left(1-w\right)^{-\gamma}\, dw\bigg)\\
 &\asymp \left\{\begin{array}{ll}
 \big(\frac{t-s}{s}\big)^{\gamma}, \qquad &\hbox{ if } -1/2 <\alpha < 1/2,\\
  \big(\frac{t-s}{s}\big)^{\gamma}\left(1+ \ln \big| \frac s {t-s}\big|\right),  &\hbox{ if } \alpha =1/2.
 \end{array}\right. 
 \end{split}
\end{equation}
Combining (\ref{Eq:I1c})-(\ref{Eq:I12b}) yields that in Case (ii),
 \begin{equation} \label{Eq:I1d}
 I_1 \asymp  \left\{\begin{array}{ll}
 \frac{(t-s)^{2 \alpha + 1}} {s^{\gamma}}, \qquad &\hbox{ if } -1/2 < \alpha < 1/2,\\
  \frac{(t-s)^2} {s^{\gamma}}\left(1+ \ln \big|\frac s {t-s}\big|\right),  &\hbox{ if } \alpha =1/2.
 \end{array}\right. 
 \end{equation} 
 It follows from this and  \eqref{eq Zt-Zs1a} that 
 \begin{equation}\label{Eq:Zt-Zs1b}
\mathbb E\left[ (Z(t)-Z(s))^2 \right] 
 \ge c_{3,3}\, \left\{\begin{array}{ll}
 \frac{|t-s|^{2\alpha+1}}{s^{\gamma}}, \ \quad &\hbox{ if } -1/2 < \alpha < 1/2,\\ 
 \frac{(t-s)^2} {s^{\gamma}} \left(1+\ln \big| \frac s {t-s}\big|\right),  \ \quad &\hbox{ if }  \alpha=1/2,
\end{array}
\right.
 \end{equation} 
 where $c_{3,3}> 0$ is a finite constant.

On the other hand, it follows from \eqref{eq Zt-Zs1a},  \eqref{Eq:I2a} and (\ref{Eq:I1d}) that
\begin{equation}\label{Eq:Zt-Zs1c}
\mathbb E\left[ (Z(t)-Z(s))^2 \right] 
 \le c_{3,4}\, \left\{\begin{array}{ll}
  \frac{|t-s|^{2\alpha+1}}{s^{\gamma}}, \ \quad &\hbox{ if }\ -1/2 < \alpha < 1/2,\\ 
 \frac{|t-s|^2} {s^{\gamma}}\left(1+ \ln \big| \frac s {t-s}\big|\right),  \ \quad &\hbox{ if } \ \alpha=1/2,
\end{array}
\right.
 \end{equation} 
where $c_{3,4} > 0$ is a finite constant. This finishes the proof of (\ref{Eq:expect1a}).  
\end{proof}

\begin{remark} \label{Re:1}{\rm The following are some remarks about Lemma  \ref{Lem:VarioZ}
and some of its consequences. 
\begin{itemize}
\item[(i).] \,  
 It follows from Lemma  \ref{Lem:VarioZ}  that for 
any  $0 < a < b < \infty$ there exist constants $c_{3,5}, \cdots, c_{3,8}\in (0,\infty)$ such that for 
$s,t\in [a, \, b]$, we have that for  $\alpha\in(-1/2,\,  1/2)$,
\begin{equation}\label{eq expect1}
c_{3,5}|t-s|^{2\alpha+1} \le \mathbb E\left[ (Z(t)-Z(s))^2 \right]  \le c_{3,6} |t-s|^{2\alpha+1};
\end{equation}  
 for  $\alpha=1/2$,
\begin{equation}\label{eq expect12}  
\begin{split}
c_{3,7}|t-s|^2  \left(1+ \big|\ln |t-s| \big|\right)\le &\, \mathbb E\left[ (Z(t)-Z(s))^2 \right] \\
& \le c_{3,8} |t-s|^2  \left(1+ \big|\ln |t-s| \big|\right). 
\end{split}
 \end{equation} 
Consequently,  the process $Z$ has a modification that is uniformly H\"older continuous 
on $[a,\, b]$ of order $\alpha+1/2-\varepsilon$ for all $\ep>0$.  In the proof of Theorem \ref{thm uniform} below, 
we will establish an exact uniform modulus of continuity of $Z$ on any interval $[a, b]$ for $0 < a < b < \infty.$

\item[(ii).]\, By Lemma \ref{lem Y moment}, we see that  for  $\alpha\in(-1/2 +\gamma/2,\,  1/2]$,  the inequalities 
\eqref{eq expect1} and \eqref{eq expect12} hold for GFBM $X$.  These inequalities can be applied to determine the 
fractal dimensions of random sets (e.g. range, graph, level set, etc) generated by the sample path of $X$. For 
example, we can derive by using standard covering and capacity methods (cf. \cite{Fal14, Kahane85, Xiao2009}) 
the following Hausdorff dimension results: for any $T > 0$,
\begin{equation}\label{Eq:Gr}
\dim_{\rm H} {\rm Gr} X([0, T])  =  2 - \Big(\frac 1 2 + \frac \alpha 2\Big) = \frac{3 - \alpha} 2, \quad \hbox{ a.s.},
\end{equation}
where ${\rm Gr} X([0, T])= \{(t, X(t)): t \in [0, T]\}$ is the graph set of $X$, and for every $x \in \mathbb R$ we have
\begin{equation}\label{Eq:Level}
\dim_{\rm H}  X^{-1}(x) =  1 - \Big(\frac 1 2 + \frac \alpha 2\Big) = \frac{1 - \alpha} 2   
\end{equation}
with positive probability, where $X^{-1}(x) = \{t \ge 0: X(t) = x \}$ is the level set of $X$. We remark that, due to the 
$\sigma$-stability of Hausdorff dimension $\dim_{\rm H}$ (\cite{Fal14}),  the asymptotic behavior of $X$ at $t = 0$ 
(hence the self-similarity index $H$) has no effect on (\ref{Eq:Gr}) and \eqref{Eq:Level}.  Later on, we will indicate 
how more precise results than (\ref{Eq:Gr}) and (\ref{Eq:Level}) can be established for $X$; see Remark \ref{Re:refine} 
below. 
\item[(iii).] \, Let $\xi = \{\xi(t)\}_{t\ge 0}$ be a  centered Gaussian process. If there exists an even, non-negative, 
and non-decreasing function $\varphi(h)$ satisfying $\lim_{h\downarrow 0}h/\varphi(h)=0$ and
$$
\mathbb E[(\xi(t+h)-\xi(t))^2]\ge \varphi(h)^2, \ \ \  \ t\ge0,h\in (0,1),
$$
then by using the argument in Yeh \cite{Yeh},  one can prove   that  the sample functions of $\xi$ are 
nowhere differentiable with probability one. See also \cite{KK1971}. Thus, if  $-1/2<\alpha\le 1/2$, then 
\eqref{eq expect1}  and \eqref{eq expect12}  imply that the sample paths  of the generalized Riemann-Liouville 
FBM $Z$ are nowhere differentiable in $(0,\infty)$ with probability one.
\end{itemize}
}
\end{remark}

The next lemma will be used for studying the tangent processes of $Z$ in Proposition
 \ref{Prop tangent Z}.

 \begin{lemma}\label{Lem:VarioZ2}
 Assume  $\alpha\in(-1/2,  1/2)$ and $\gamma \in [0, 1)$.   
Then for every fixed $t\in (0,\infty)$, 
  \begin{equation}\label{eq lim increm}
 \begin{split}
 \lim_{s\rightarrow t}\frac{ \mathbb E\left[ (Z(t)-Z(s))^2 \right] }{|t-s|^{2\alpha+1}}
 =    c_{3,9}  t^{-\gamma},
 \end{split}
 \end{equation}  
 where  $c_{3,9}= \frac{1}{2\alpha+1}+\int_0^{\infty} \big[(1+v)^{\alpha}-v^{\alpha}\big]^2dv$.
 \end{lemma}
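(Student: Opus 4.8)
The plan is to compute $\mathbb E[(Z(t)-Z(s))^2]$ exactly and then extract the limit as $s \to t$. As in the proof of Lemma \ref{Lem:VarioZ}, I would start from the decomposition
\[
\mathbb E\left[ (Z(t)-Z(s))^2 \right] = \underbrace{\int_0^s\left((t-u)^{\alpha}-(s-u)^{\alpha}\right)^2 u^{-\gamma}\,du}_{I_1} + \underbrace{\int_s^t(t-u)^{2\alpha}u^{-\gamma}\,du}_{I_2},
\]
valid for any $0 < s < t$. Since the claim is a limit for fixed $t$, it suffices to treat $s \uparrow t$ (the case $s \downarrow t$, i.e. swapping the roles of $s$ and $t$, is handled symmetrically). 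The strategy is to normalize each integral by $|t-s|^{2\alpha+1}$ via the same changes of variable used in Lemma \ref{Lem:VarioZ}, namely $u = s - (t-s)v$ in $I_1$ and $u = s + (t-s)v$ in $I_2$, and then pass to the limit inside the resulting integrals.

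For $I_2$, the substitution gives $I_2 = (t-s)^{2\alpha+1}\int_0^1 (1-v)^{2\alpha}\big(\tfrac{s}{t-s}+v\big)^{-\gamma}\,dv$. Writing $\tfrac{s}{t-s}+v = \tfrac{1}{t-s}\big(s+(t-s)v\big)$, I would factor out $(t-s)^{\gamma}$ and observe that as $s \to t$ the integrand $(1-v)^{2\alpha}(s+(t-s)v)^{-\gamma}$ converges pointwise on $[0,1]$ to $(1-v)^{2\alpha}t^{-\gamma}$; dominated convergence (the bound $s+(t-s)v \ge s/2$ for $s$ near $t$ controls the $u^{-\gamma}$ singularity) then yields $I_2/|t-s|^{2\alpha+1} \to t^{-\gamma}\int_0^1(1-v)^{2\alpha}\,dv = \tfrac{1}{2\alpha+1}t^{-\gamma}$. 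For $I_1$, the substitution gives, as in \eqref{Eq:I1}, $I_1 = (t-s)^{2\alpha+1}\int_0^{s/(t-s)}\big[(1+v)^{\alpha}-v^{\alpha}\big]^2\big(\tfrac{s}{t-s}-v\big)^{-\gamma}\,dv$. Again factoring $(t-s)^{-\gamma}$ out of the weight, the integrand converges pointwise to $t^{-\gamma}\big[(1+v)^{\alpha}-v^{\alpha}\big]^2$ as $s\to t$, the upper limit $s/(t-s)\to\infty$, and I expect $I_1/|t-s|^{2\alpha+1} \to t^{-\gamma}\int_0^\infty\big[(1+v)^{\alpha}-v^{\alpha}\big]^2\,dv$. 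Adding the two limits produces exactly $c_{3,9}\,t^{-\gamma}$, which is the assertion.

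The main obstacle is justifying the passage to the limit in $I_1$, since both the integrand and the domain of integration vary with $s$. The finiteness of the limiting integral $\int_0^\infty\big[(1+v)^{\alpha}-v^{\alpha}\big]^2\,dv$ is exactly what requires $\alpha < 1/2$: near $v=0$ the integrand behaves like $v^{2\alpha}$ (integrable since $\alpha>-1/2$), while for large $v$ one has $(1+v)^\alpha - v^\alpha \asymp v^{\alpha-1}$, so the integrand decays like $v^{2\alpha-2}$ and is integrable precisely when $2\alpha-2 < -1$, i.e. $\alpha<1/2$. To make the limit rigorous I would split the $v$-integral at a fixed level (say $v=1$), handle $[0,1]$ by dominated convergence using the uniform bound $\big(\tfrac{s}{t-s}-v\big)^{-\gamma} \le C(t-s)^{\gamma}$ on that range for $s$ near $t$, and handle $[1,\infty)$ by combining the integrable majorant $v^{2(\alpha-1)}$ coming from $\big[(1+v)^\alpha-v^\alpha\big]^2 \lesssim v^{2(\alpha-1)}$ with the fact that $\big(\tfrac{s}{t-s}-v\big)^{-\gamma}(t-s)^{\gamma}\to t^{-\gamma}$ boundedly; a dominated-convergence argument (or equivalently writing the weight as $t^{-\gamma}$ plus an error that vanishes uniformly on compacta and is controlled by the tail bound) then closes the estimate. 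This is the only delicate point; the $I_2$ limit and the arithmetic of combining the two constants are routine.
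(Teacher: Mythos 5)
Your proposal is correct and follows essentially the same route as the paper's proof: the same decomposition into $I_1+I_2$, the same changes of variables, dominated convergence giving $I_2/|t-s|^{2\alpha+1}\to \frac{1}{2\alpha+1}t^{-\gamma}$ and the bulk of $I_1$ giving $t^{-\gamma}\int_0^\infty\big[(1+v)^{\alpha}-v^{\alpha}\big]^2dv$, and the tail bound $|(1+v)^{\alpha}-v^{\alpha}|\le|\alpha|v^{\alpha-1}$ showing that the region near the endpoint $u=0$ (where the weight $u^{-\gamma}$ blows up) contributes only $O\big((t-s)^2\big)=o\big(|t-s|^{2\alpha+1}\big)$ since $\alpha<1/2$ --- the paper splits at $v=s/(2(t-s))$ rather than at $v=1$, but this difference is cosmetic. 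One small slip: your displayed formula for $I_1$ pairs the prefactor $(t-s)^{2\alpha+1}$ with the weight $\big(\tfrac{s}{t-s}-v\big)^{-\gamma}$, whereas it should be $(t-s)^{2\alpha+1-\gamma}$ as in \eqref{Eq:I1}, or equivalently $(t-s)^{2\alpha+1}$ paired with $\big(s-(t-s)v\big)^{-\gamma}$, which is what your subsequent limiting argument in fact uses.
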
  
      
\begin{proof}  
For simplicity, we consider the limit in \eqref{eq lim increm} as $s\uparrow t$ only. For $0< s<t<\infty$, 
 recall from \eqref{eq Zt-Zs1a} the decomposition of $\mathbb E\left[ (Z(t)-Z(s))^2 \right]$.

The term $I_2$ is easier and we handle it first. Since $u^{-\gamma}\in [t^{-\gamma}, s^{-\gamma}]$ for any $u\in[s,t]$,  
we obtain that 
 \begin{align}\label{eq moment st}
 \frac{1}{  2\alpha+1}\frac{ |t-s|^{2\alpha+1}}{t^{\gamma}}\le   I_2\le  \frac{1}{ 2\alpha+1 }\frac{|t-s|^{2\alpha+1}}{s^{\gamma}}.
 \end{align} 
 This implies that 
 \begin{equation}\label{eq lim increm2}
 \lim_{s\uparrow t}\frac{ I_2  }{|t-s|^{2\alpha+1}}  =  \frac{1} {2\alpha+1} t^{-\gamma}.
 \end{equation}

For the integral $I_1$, we use (\ref {Eq:I1}) and write it as 
\begin{equation}\label{eq moment I}
\begin{split}
  I_1=&\, \int_0^s\left[(t-u)^{\alpha}-(s-u)^{\alpha} \right]^2u^{-\gamma}du \\
    =&\, (t-s)^{2\alpha+1}\int_0^{\frac{s}{t-s}}\left[(1+v)^{\alpha}-v^{\alpha} \right]^2\left(s-v(t-s) \right)^{-\gamma}dv\\
    =&(t-s)^{2\alpha+1}  \int_0^{\frac{s}{2(t-s)}} \left[(1+v)^{\alpha}-v^{\alpha} \right]^2\left(s -v(t-s)\right)^{-\gamma}dv \\
 & \ \ \ \ \ \ \ \ \  \ \ \ \ + (t-s)^{2\alpha-\gamma+1} \int_{\frac{s}{2(t-s)}}^{\frac{s}{t-s}} \left[(1+v)^{\alpha}-v^{\alpha} \right]^2
 \left(\frac{s}{t-s}-v\right)^{-\gamma}dv \\
=:&\,  J_{1,1}+J_{1,2}.\\
\end{split}
\end{equation}

Notice that, for every $v \in \big(0, s/[2(t-s)] \big)$, we have $s^{-\gamma} \le \left(s-v(t-s) \right)^{-\gamma} 
\le 2^\gamma s^{-\gamma}$. 
  Thus the dominated convergence theorem gives
 \begin{equation}\label{eq lim increm3}
 \lim_{s\uparrow t}\frac{ J_{1,1}  }{|t-s|^{2\alpha+1}}  =    t^{-\gamma} \int_0^\infty\left[(1+v)^{\alpha}-v^{\alpha} \right]^2dv.
 \end{equation}    
    
On the other hand, we use the inequality $|(1+v)^{\alpha}-v^{\alpha}| \le |\alpha| v^{\alpha - 1}$ for all $v \ge1$ to obtain
 \begin{equation*}\label{eq moment I30}
 \begin{split}
J_{1,2}=&\,(t-s)^{2\alpha+1} \int_{\frac{s}{2(t-s)}}^{\frac{s}{t-s}} \left[(1+v)^{\alpha}-v^{\alpha} \right]^2\left(s-v(t-s)\right)^{-\gamma}dv  \\
 \le &\,(t-s)^{2\alpha+1}  \int_{\frac{s}{2(t-s)}}^{\frac{s}{t-s}} \alpha^2 v^{2\alpha-2}\left(s -v(t-s)\right)^{-\gamma}dv   \\
 \le &\,\alpha^2 (t-s)^{2\alpha+1}   \left(\frac{s}{2(t-s)} \right)^{2\alpha-2} \int_{\frac{s}{2(t-s)}}^{\frac{s}{t-s}}\left(s -v(t-s)\right)^{-\gamma}dv   \\
= &\,  \frac{\alpha^2}{1-\gamma}  \left(\frac{s}{2}\right)^{2\alpha-1-\gamma}\, (t-s)^{2}.
 \end{split}
 \end{equation*}
It follows that
\begin{equation}\label{eq lim increm3b}
 \lim_{s\uparrow t}\frac{ J_{1,2}  }{|t-s|^{2\alpha+1}}  =  0.
 \end{equation}   Therefore, \eqref{eq lim increm} follows from \eqref{eq lim increm2}, \eqref{eq moment I}, \eqref{eq lim increm3} and 
\eqref{eq lim increm3b}. The proof is complete.
\end{proof}

The following lemma deals with the case when $\alpha > 1/2$ and provides estimates on the second 
moments of the increments of $Z(t)$ and its mean-square  derivative $Z'(t)$. The latter estimate 
allows us to show that $\{Z(t)\}_{ t \ge 0}$ has a modification  whose sample functions are continuously 
differentiable in $(0,\infty)$.   

For simplicity, we only consider the case when $\alpha \in (1/2,\, 3/2)$ and $s, t$ stay away from the origin. 
This is sufficient for our study of the sample path properties of  GFBM $X$.

\begin{lemma}\label{lem expect} 
Assume    $\alpha \in(1/2,\, 3/2)$ and $\gamma \in [0, 1)$. For any $0<a<b<\infty$,  it holds that for any 
$s,t\in [a,\,b]$,
 \begin{equation}\label{eq expect2}
c_{3,10}|t-s|^{2}\le  \mathbb E\left[ (Z(t)-Z(s))^2 \right]\le c_{3,11}|t-s|^{2},
\end{equation}  
here $c_{3,10}=\frac{\alpha^2}{1-\gamma}  b^{2\alpha-2}a^{1-\gamma} $ and $c_{3,11}=\alpha^2
a^{2\alpha-1-\gamma}\mathcal B(2\alpha-1, 1-\gamma)$. 

The process $\{Z(t)\}_{t \in [a,b]}$ has a modification, which is still denoted by $Z$, such that its derivative process
$\{Z'(t)\}_{t \in [a, b]}$ is continuous almost surely. Furthermore,  there exist   constants   
 $c_{3,12}, c_{3,13}\in (0,\infty)$ such that for any $s, t\in [a,b]$,
\begin{equation}\label{eq expect3}
c_{3,12}|t-s|^{2\alpha-1}\le  \mathbb E\left[ (Z'(t)-Z'(s))^2 \right]\le c_{3,13}|t-s|^{2\alpha-1}.
\end{equation}
 
\end{lemma}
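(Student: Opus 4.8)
The plan is to treat the three assertions in turn: first the variance bound \eqref{eq expect2}, then the differentiable modification, and finally the derivative estimate \eqref{eq expect3}. For \eqref{eq expect2} I would start from the exact decomposition \eqref{eq Zt-Zs1a}, writing $\mathbb E[(Z(t)-Z(s))^2]=I_1+I_2$ for $0<s<t$ in $[a,b]$. The term $I_2=\int_s^t(t-u)^{2\alpha}u^{-\gamma}du$ satisfies $I_2\asymp(t-s)^{2\alpha+1}$ uniformly on $[a,b]$ (bound $u^{-\gamma}$ between $b^{-\gamma}$ and $a^{-\gamma}$ and integrate the power), so since $2\alpha+1>2$ it is of lower order than $|t-s|^2$ and contributes only a positive negligible amount. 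The dominant term $I_1$ is handled exactly as in Lemma \ref{lem Y moment}: by Taylor's formula \eqref{eq Taylor1}, $(t-u)^\alpha-(s-u)^\alpha=\alpha(s-u+\theta(u)(t-s))^{\alpha-1}(t-s)$ with $\theta(u)\in(0,1)$, whence $I_1=\alpha^2(t-s)^2\int_0^s(s-u+\theta(u)(t-s))^{2\alpha-2}u^{-\gamma}du$. Since the base $s-u+\theta(u)(t-s)$ lies between $s-u$ and $t-u$, both of which stay in $(0,b]$, I would bound the integrand above and below (distinguishing $\alpha\in(1/2,1)$ from $\alpha\in[1,3/2)$ according to the sign of $2\alpha-2$) and combine this with $\int_0^s u^{-\gamma}du=s^{1-\gamma}/(1-\gamma)\in[a^{1-\gamma}/(1-\gamma),\,b^{1-\gamma}/(1-\gamma)]$ and the Beta integral $\int_0^1(1-w)^{2\alpha-2}w^{-\gamma}dw=\mathcal B(2\alpha-1,1-\gamma)$ (finite because $\alpha>1/2$ and $\gamma<1$). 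This yields $I_1\asymp(t-s)^2$ with the explicit constants $c_{3,10},c_{3,11}$, and adding $I_2\ge0$ gives \eqref{eq expect2}.

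Next, for the differentiable modification I would define the candidate derivative $Z'(t):=\alpha\int_0^t(t-u)^{\alpha-1}u^{-\gamma/2}B(du)$; its integrand is in $L^2((0,t);\mathbb R)$ precisely because $\alpha>1/2$ (integrability of $(t-u)^{2\alpha-2}$ at $u=t$) and $\gamma<1$ (integrability of $u^{-\gamma}$ at $0$), so $\{Z'(t)\}_{t>0}$ is a well-defined mean-zero Gaussian process. To identify it as the derivative of $Z$ I would follow the argument of Proposition \ref{prop Y} verbatim: writing $Z(t)=\int_0^\infty(t-u)_+^\alpha u^{-\gamma/2}B(du)$ and applying the stochastic Fubini theorem and integration by parts in $t$ against a test function $\phi\in C_c^\infty((0,\infty))$ (the boundary terms vanish since $\phi$ has compact support and $(t-u)_+^\alpha$ is continuous at $t=u$ because $\alpha>0$), one obtains $\int_0^\infty Z'(t)\phi(t)dt=-\int_0^\infty Z(t)\phi'(t)dt$ a.s., so $Z'$ is the weak derivative of $Z$. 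Once \eqref{eq expect3} is established, $Z'$ is Hölder of every order below $\alpha-1/2>0$, hence continuous up to a modification by Kolmogorov's theorem, and a continuous weak derivative is the ordinary derivative, giving the asserted $C^1$ modification.

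Finally, \eqref{eq expect3} comes almost for free from the observation that $Z'$ is $\alpha$ times a generalized Riemann--Liouville FBM $\widetilde Z$ with index $\alpha-1$ in place of $\alpha$. Since $\alpha\in(1/2,3/2)$ forces $\alpha-1\in(-1/2,1/2)$, the estimate \eqref{eq expect1} of Remark \ref{Re:1}(i) (equivalently Lemma \ref{Lem:VarioZ} with $\alpha$ replaced by $\alpha-1$) applies to $\widetilde Z$ on $[a,b]$ and gives $\mathbb E[(Z'(t)-Z'(s))^2]=\alpha^2\,\mathbb E[(\widetilde Z(t)-\widetilde Z(s))^2]\asymp|t-s|^{2(\alpha-1)+1}=|t-s|^{2\alpha-1}$, which is exactly \eqref{eq expect3}.

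I expect the main obstacle to be the bookkeeping in the lower and upper bounds for $I_1$: the Taylor base $s-u+\theta(u)(t-s)$ depends on the unknown $\theta(u)$, so one must monotonize $x\mapsto x^{2\alpha-2}$ correctly in the two regimes $\alpha<1$ and $\alpha\ge1$ to produce clean, $[a,b]$-uniform constants rather than merely qualitative $\asymp$ bounds. Everything else is either a direct citation of earlier results (for \eqref{eq expect3}) or a line-by-line repeat of the $Y$-differentiability argument (for the modification).
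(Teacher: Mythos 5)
Your proposal is correct and follows essentially the same route as the paper's own proof: Taylor's formula \eqref{eq Taylor1} applied to the decomposition $I_1+I_2$ for \eqref{eq expect2}, the definition $Z'(t)=\alpha\int_0^t(t-u)^{\alpha-1}u^{-\gamma/2}B(du)$ recognized as a generalized Riemann--Liouville FBM with indices $\alpha-1\in(-1/2,1/2)$ and $\gamma$ so that \eqref{eq expect3} is immediate from \eqref{eq expect1}, then Kolmogorov continuity plus stochastic Fubini to identify $Z'$ with the derivative. The only (minor) divergence is in the identification step --- the paper integrates rather than differentiates, setting $\widetilde Z(t)=\int_0^t Z'(s)\,ds$ and showing $\widetilde Z(t)=Z(t)$ a.s.\ by a single application of stochastic Fubini, whereas you repeat the weak-derivative/test-function argument of Proposition \ref{prop Y}, which works equally well here since $t\mapsto(t-u)_+^{\alpha}$ is absolutely continuous for $\alpha>1/2$; incidentally, your case split on the sign of $2\alpha-2$ when monotonizing the Taylor base is warranted, as the paper's displayed chain of lower bounds (using $[s-u+\theta(u)(t-s)]^{2\alpha-2}\ge(t-u)^{2\alpha-2}\ge t^{2\alpha-2}\ge b^{2\alpha-2}$) is valid as written only for $\alpha\le 1$, and for $\alpha\in(1,3/2)$ one obtains \eqref{eq expect2} with the reversed monotonizations and correspondingly different explicit constants.
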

   
\begin{proof}  The proof of  \eqref{eq expect2} is similar to that of Lemma \ref{Lem:VarioZ}. Here, we only 
 prove the lower bound in \eqref{eq expect2} and the existence of a modification of $Z$ 
whose sample functions are continuously differentiable on $(0, \infty)$ almost surely.   

For any $s,t\in [a,b]$ with $s<t$,  by Taylor's formula \eqref{eq Taylor1} and  \eqref{eq Zt-Zs1a},  we have 
\begin{equation*}
\begin{split}
\mathbb E\left[(Z(t)-Z(s))^2 \right]\ge &\int_0^s\left((t-u)^{\alpha}-(s-u)^{\alpha}   \right)^2 u^{-\gamma}du \\ 
\ge&\,\alpha^2(t-s)^2 \int_0^s (t-u)^{2\alpha-2} u^{-\gamma}du \\
\ge &\,\alpha^2 (t-s)^2\,  t^{2\alpha-2}\frac{  s^{1-\gamma}}{1-\gamma} \\
\ge &\,    \frac{\alpha^2}{1-\gamma}  b^{2\alpha-2}a^{1-\gamma}  (t-s)^2.
\end{split}
\end{equation*}
Thus, the lower bound in \eqref{eq expect2} holds.
 
For any $t\ge 0$, define 
\begin{align}\label{eq Z 1}
Z'(t):=\alpha \int_0^{t} (t-u)^{\alpha-1}u^{-\frac{\gamma}{2}}B(du),
 \end{align}
with $Z'(0)=0$. Notice that, since $\alpha \in (1/2, \, 3/2)$, the process $\left\{Z'(t)\right\}_{ t\ge 0}$ is a generalized 
Riemann-Liouville FBM with  indices $\alpha-1$ and $\gamma$.  It is self-similar with index $\tilde{H} = \alpha -1/2 
-\gamma/2 $. Hence,  by \eqref{eq expect1},  we see that  \eqref{eq expect3} holds. By the Kolmogorov 
continuity theorem (see, e.g.,  \cite[Theorem C.6]{K}), the Gaussian property of $Z'$ and the arbitrariness of $a$ and $b$, 
we know that  $Z'$ has a modification (still denoted by $Z'$) that is  H\"older continuous in  $(0,\infty)$ with index 
$\alpha-1/2-\varepsilon$ for any $\varepsilon>0$. With this modification, we define a Gaussian process $\widetilde{Z}
=\big\{\widetilde{Z}(t)\big\}_{t \ge 0}$ by
\[ 
\widetilde{Z}(t) = \int_0^t Z'(s) ds, \quad \forall\, t \ge 0.
\] 
Then, by the stochastic Fubini theorem  \cite[Corollary 2.9]{K}, we derive that for every $t \ge 0$,
\[
\widetilde{Z}(t) =\alpha \int_0^t  \bigg(\int_u^t (s-u)^{\alpha - 1} ds\bigg)u^{-\frac \gamma 2}\, B(du) = Z(t), \quad \hbox{ a.s.}
\]
Hence $\widetilde{Z}$ is modification of the generalized Riemann-Liouville FBM $Z=\{Z(t)\}_{t \ge 0}$ and 
the sample function of  $\widetilde{Z}$ is a.s. continuously differentiable in $(0,\, \infty)$.   
The proof is complete.
\end{proof}

\subsection{The tangent process of $Z$}\label{Sec: Tangent}
   
Let $C(\mathbb R_+, \mathbb R)$ be the space of all continuous functions from $\mathbb R_+$ to $\mathbb R$, 
endowed with the locally uniform convergence topology.  We say  that a family of stochastic processes
$V_n = \{V_n(\tau)\}_{\tau \ge 0}$  converges weakly (or in distribution) to $V = \{V(\tau)\}_{\tau \ge 0}$ in $C(\mathbb R_+, \mathbb R)$,  
if $\mathbb E[f(V_n)]\rightarrow\mathbb E[f(V)]$   for  every bounded, continuous function $f: C(\mathbb R_+, \mathbb R)
\rightarrow\mathbb R$ (cf. \cite{Bil}).

 \begin{proposition}\label{Prop tangent Z} Assume $\alpha\in(-1/2, 1/2)$ and $\gamma \in [0, 1)$.   For any $t,\,
  u>0$, let
  \begin{equation*}\label{eq VZ}
V_Z^{t,u}(\tau) :=\frac{ Z(t+u\tau)-Z(t)}{u^{\alpha+1/2}}, \ \ \    \tau\ge0.
 \end{equation*} 
 Then, as $u\rightarrow0+$, the process $\big\{V_Z^{t,u}(\tau)\big\}_{\tau\ge0}$  converges in distribution to 
 $\big\{c_{3,9}^{1/2} t^{-\gamma/2} B^{\alpha+1/2}(\tau)\big\}_{\tau\ge 0}$ in $C(\mathbb R_+,\mathbb R)$,   
 where   $c_{3,9}=\frac{1}{2\alpha+1}+\int_0^{\infty} \big[(1+v)^{\alpha}-v^{\alpha}\big]^2dv$ is the constant in  
Lemma \ref{Lem:VarioZ2}. 
        \end{proposition}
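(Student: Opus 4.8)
The plan is to establish weak convergence in $C(\mathbb{R}_+, \mathbb{R})$ by verifying the two standard ingredients: convergence of finite-dimensional distributions, and tightness. Since all processes involved are centered Gaussian, the finite-dimensional distributions are determined entirely by covariances, so the first step reduces to a covariance computation. For fixed $t > 0$ and any $\tau, \sigma \ge 0$, I would write
\begin{equation*}
\mathbb{E}\big[V_Z^{t,u}(\tau)\, V_Z^{t,u}(\sigma)\big]
= \frac{1}{2 u^{2\alpha+1}}\Big(\mathbb{E}\big[(Z(t+u\tau)-Z(t))^2\big]
+ \mathbb{E}\big[(Z(t+u\sigma)-Z(t))^2\big]
- \mathbb{E}\big[(Z(t+u\tau)-Z(t+u\sigma))^2\big]\Big),
\end{equation*}
using the polarization identity. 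Each of the three second moments is a second moment of an increment of $Z$, so I can apply Lemma \ref{Lem:VarioZ2} directly: as $u \to 0+$, the spacings $u\tau$, $u\sigma$, and $u|\tau-\sigma|$ all tend to $0$ while the base point stays near $t$, so each term behaves like $c_{3,9}\, t^{-\gamma}$ times the appropriate power $(u\tau)^{2\alpha+1}$, $(u\sigma)^{2\alpha+1}$, $(u|\tau-\sigma|)^{2\alpha+1}$. The factor $u^{2\alpha+1}$ cancels against the normalization, and I would obtain in the limit
\begin{equation*}
\lim_{u\to0+}\mathbb{E}\big[V_Z^{t,u}(\tau)\, V_Z^{t,u}(\sigma)\big]
= \frac{c_{3,9}\, t^{-\gamma}}{2}\Big(\tau^{2\alpha+1} + \sigma^{2\alpha+1} - |\tau-\sigma|^{2\alpha+1}\Big),
\end{equation*}
which is exactly the covariance of $c_{3,9}^{1/2} t^{-\gamma/2} B^{\alpha+1/2}$, the fractional Brownian motion with Hurst index $\alpha + 1/2$ scaled by $c_{3,9}^{1/2} t^{-\gamma/2}$. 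A small technical point is that Lemma \ref{Lem:VarioZ2} is stated for the base point fixed at $t$ with the other endpoint approaching $t$; for the cross term $Z(t+u\tau)-Z(t+u\sigma)$ the base point itself drifts, so I would either re-inspect the proof of Lemma \ref{Lem:VarioZ2} to confirm the limit is uniform in a neighborhood of $t$, or redo that short dominated-convergence argument with $t$ replaced by $t + u\sigma \to t$.

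For tightness in $C(\mathbb{R}_+, \mathbb{R})$, I would use the Gaussian Kolmogorov-Chentsov criterion on each compact time interval $[0, N]$. The key moment bound comes from Lemma \ref{Lem:VarioZ}(i): for $u$ small and $\tau, \sigma \in [0, N]$, the increment $Z(t+u\tau) - Z(t+u\sigma)$ is an increment of $Z$ over points clustered near $t$, so its second moment is bounded above by a constant times $(u|\tau-\sigma|)^{2\alpha+1}/s^\gamma \asymp u^{2\alpha+1}|\tau-\sigma|^{2\alpha+1}$ (with the base point bounded below away from $0$, giving a uniform constant in $u$). After dividing by $u^{2\alpha+1}$, this yields
\begin{equation*}
\mathbb{E}\big[(V_Z^{t,u}(\tau) - V_Z^{t,u}(\sigma))^2\big] \le C_N\, |\tau - \sigma|^{2\alpha+1},
\end{equation*}
uniformly in small $u$. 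Since the process is Gaussian, all moments are controlled by this variance, so for any $p$ we get $\mathbb{E}|V_Z^{t,u}(\tau)-V_Z^{t,u}(\sigma)|^p \le C_{N,p}|\tau-\sigma|^{p(\alpha+1/2)}$; choosing $p$ large enough that $p(\alpha+1/2) > 1$ gives the required exponent for the Kolmogorov criterion and hence tightness on $[0,N]$. Tightness on all of $\mathbb{R}_+$ follows since convergence in $C(\mathbb{R}_+,\mathbb{R})$ is governed by the locally uniform topology, i.e. restriction to each $[0,N]$.

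I expect the main obstacle to be the cross-term variance in the finite-dimensional step, precisely because Lemma \ref{Lem:VarioZ2} as stated fixes one endpoint at $t$, whereas the covariance computation needs the limiting behavior of $\mathbb{E}[(Z(a_u)-Z(b_u))^2]$ where both $a_u, b_u \to t$. The cleanest resolution is to observe that the proof of Lemma \ref{Lem:VarioZ2} is driven by dominated convergence and the local structure of the kernel near the diagonal, so the stated limit $c_{3,9}\,t^{-\gamma}$ depends only on the common limiting base point and not on which endpoint is held fixed; I would state this as a short remark or a one-line extension of that lemma. Everything else is routine: the Gaussian reduction makes finite-dimensional convergence purely a matter of covariances, and the tightness bound is a direct consequence of the upper bound already proved in Lemma \ref{Lem:VarioZ}(i), with the base point staying in a compact subinterval of $(0,\infty)$ so that the constants are uniform in $u$.
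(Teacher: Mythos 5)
Your proof is correct and follows essentially the same route as the paper: finite-dimensional convergence is reduced via polarization to the covariance limit of Lemma \ref{Lem:VarioZ2} (the paper packages the same computation as the expansion \eqref{eq sim t-s}), and tightness comes from the uniform bound $\mathbb{E}\big[(V_Z^{t,u}(\tau)-V_Z^{t,u}(\sigma))^2\big]\le c\, t^{-\gamma}|\tau-\sigma|^{2\alpha+1}$ supplied by Lemma \ref{Lem:VarioZ}, where the paper simply cites \cite[Proposition 4.1]{Fal02} in place of your direct Kolmogorov--Chentsov argument (note only that for small $u$ the relevant regime of Lemma \ref{Lem:VarioZ} is case (ii), not (i), though the upper bounds coincide for $\alpha<1/2$). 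Your explicit flagging of the cross term --- that Lemma \ref{Lem:VarioZ2} must be extended to increments whose base point drifts toward $t$, handled by rerunning the dominated-convergence argument --- is a genuine subtlety that the paper's proof passes over silently, and your resolution of it is correct.
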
 
   \begin{proof} By the self-similarity of $Z$ and \eqref{eq lim increm}, we know that for any $t, s>0$,
   \begin{equation}\label{eq sim t-s}
   \begin{split} 
   &\mathbb E\big[Z(t)Z(s)\big]\\
   =&\,\frac{1}{2} \mathbb E\big[Z(t)^2+Z(s)^2-(Z(t)-Z(s))^2\big]\\
    = &\, \frac{1}{2}\Big(\mathcal B(2\alpha+1, 1-\gamma) \left(t^{2H}+s^{2H}\right)- c_{3,9} t^{-\gamma}|t-s|^{2\alpha+1}  \Big) 
   +o(t^{-\gamma}|t-s|^{2\alpha+1}).
   \end{split}
   \end{equation}
   For any $t>0, \tau_1, \tau_2\ge0$, by  \eqref{eq sim t-s}, we have 
   \begin{align}
  \lim_{u\rightarrow0+}  \mathbb E\left[V_Z^{t,u}(\tau_1) V_Z^{t,u}(\tau_2)   \right] =\frac{c_{3,9}}{2}t^{-\gamma}
  \left(\tau_1^{2\alpha+1}+\tau_2^{2\alpha+1}-|\tau_1-\tau_2|^{2\alpha+1}  \right).
   \end{align}
   Hence,  the   Gaussian process $\{V_Z^{t, u}(\tau)\}_{\tau\ge0}$ converges to $c_{3,9}^{1/2} t^{-\gamma/2} 
   B^{\alpha+1/2}$ in finite-dimensional distributions, as $u\rightarrow0+$,  where $B^{\alpha+1/2}$ is a FBM with index $\alpha+1/2$. 
     By Lemma  \ref{Lem:VarioZ}, there exists a constant $c_{3,14}\in (0,\infty)$ such that for   all $t>0, u>0, \tau_1, \tau_2\ge 0$,
   \begin{align}
   \mathbb E\left[ \left(V_Z^{t, u}(\tau_1)- V_Z^{t, u}(\tau_2)\right)^2\right]\le c_{3,14} t^{-\gamma} \left|\tau_1-\tau_2\right|^{2\alpha+1}.
   \end{align} 
   Hence, by \cite[Proposition 4.1]{Fal02}, we know the family $\big\{V_Z^{t,u} \big\}_{u>0}$ is tight in 
   $C(\mathbb R_+, \mathbb R)$ and  then it  converges in distribution  in $C(\mathbb R_+,\mathbb R)$,   as $u\rightarrow0+$.  
   The proof is complete.     
   \end{proof}

 \subsection{One-sided  strong local nondeterminism}
   
We establish the following one-sided strong local nondeterminism (SLND, in short) for $Z$. This 
property is essential for dealing with problems that involve joint distributions of random variables 
$Z(t_1), \ldots, Z(t_n)$.  From the proof, it is clear that GFBM $X$ has the same SLND property.  

   \begin{proposition}\label{lem SLND}
   \begin{itemize}
    \item[(a)] Assume  $\alpha\in(-1/2, 1/2]$ and $\gamma \in [0, 1)$.  For any 
   constant $b>0$,  it holds that  for any $s, t\in[0,b]$ with $s<t$,
   \begin{align}\label{eq one-sided}
   \Var(Z(t)|Z(r): r\le s)\ge \frac{1}{( 2\alpha+1)b^{\gamma}} |t-s|^{2\alpha+1},
   \end{align}    where $\Var(Z(t)|Z(r): r\le s)$ denotes the conditional variance of $Z(t)$ given the $\sigma$-algebra
   $\sigma(Z(r): r\le s)$.
   \item[(b)] Assume  $ \alpha\in(1/2,\, 3/2)$ and $\gamma \in [0, 1)$. 
   For any $b>0$, it holds that for any $s, t\in[0,b]$ with $s<t$,
   \begin{align}\label{eq one-sided2}
   \Var\left(Z'(t)|Z'(r): r\le s\right)\ge \frac{1}{( 2\alpha-1)b^{\gamma}}|t-s|^{2\alpha-1}.
   \end{align}  
\end{itemize}
   \end{proposition}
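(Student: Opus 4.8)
The plan is to exploit the moving-average (white-noise integral) structure of $Z$ in \eqref{eq Z} together with the elementary fact that, for a Gaussian family, the conditional variance of a target variable given a sub-family equals the squared $L^2(\Omega)$-distance from the target to the closed linear span of that sub-family, a deterministic quantity. So I would first realize $\Var(Z(t)\mid Z(r):r\le s)$ as a distance and then obtain the lower bound by \emph{enlarging} the space onto which we project, which can only shrink the residual.

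Concretely, I would set up the standard Gaussian Hilbert space: the isometry $\mathbf 1_{[0,v]}\mapsto B([0,v])$ extends to a linear isometry of $L^2(\mathbb R_+,du)$ onto the Gaussian space generated by $B$, under which $Z(r)$ corresponds to the kernel $f_r(u)=(r-u)^{\alpha}u^{-\gamma/2}\mathbf 1_{[0,r]}(u)$. For every $r\le s$ this kernel is supported in $[0,s]$, so the closed span $\mathcal H_s:=\overline{\mathrm{span}}\{Z(r):r\le s\}$ is contained in the subspace $\mathcal K_s$ corresponding to $L^2([0,s],du)$. Since projecting onto a larger subspace leaves a smaller residual,
\[
\Var\big(Z(t)\mid Z(r):r\le s\big)=\mathrm{dist}_{L^2(\Omega)}\big(Z(t),\mathcal H_s\big)^2\ \ge\ \mathrm{dist}_{L^2(\Omega)}\big(Z(t),\mathcal K_s\big)^2 .
\]
The point of passing to $\mathcal K_s$ is that the right-hand side is explicit: decomposing $f_t=f_t\mathbf 1_{[0,s]}+f_t\mathbf 1_{(s,t]}$, the projection of $Z(t)$ onto $\mathcal K_s$ removes exactly the part supported on $[0,s]$, leaving the innovation $\int_s^t(t-u)^{\alpha}u^{-\gamma/2}B(du)$, whose variance is $\int_s^t(t-u)^{2\alpha}u^{-\gamma}du$. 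This is precisely the integral $I_2$ from \eqref{eq Zt-Zs1a}, and the bound \eqref{Eq:I2b}, namely $I_2\ge \frac{1}{2\alpha+1}\,\frac{(t-s)^{2\alpha+1}}{t^{\gamma}}$, together with $t\le b$, yields part (a).

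For part (b) I would observe that the derivative process $Z'$ in \eqref{eq Z 1} is again of moving-average type, with kernel $\alpha(t-u)^{\alpha-1}u^{-\gamma/2}\mathbf 1_{[0,t]}(u)$, i.e.\ up to the factor $\alpha$ a generalized Riemann--Liouville FBM of index $\alpha-1\in(-1/2,1/2)$. The same projection argument applies verbatim (using $Z'(r)\in\mathcal K_s$ for $r\le s$), giving residual variance $\alpha^2\int_s^t(t-u)^{2\alpha-2}u^{-\gamma}du$; bounding this from below by the identical computation, with $2\alpha$ replaced by $2\alpha-2$ and $\int_0^1(1-v)^{2\alpha-2}dv=\tfrac{1}{2\alpha-1}$, and using $t\le b$, produces the bound in (b) (this is where the well-definedness and index range recorded in Lemma \ref{lem expect} are used).

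I do not expect a serious obstacle here; the two points requiring care are the correct direction of the inequality when enlarging the conditioning $\sigma$-algebra and the verification that $Z(r)$ (resp.\ $Z'(r)$) for $r\le s$ genuinely lies in $\mathcal K_s$. The conceptual crux, which I would emphasize, is that this is inherently a \emph{one-sided} estimate: it relies on conditioning only on the past $\{r\le s\}$, so that the increment over $[s,t]$ is a fresh innovation orthogonal to everything already observed. The argument would \emph{not} directly yield a two-sided strong local nondeterminism in which one conditions on values of $Z$ straddling the point $t$.
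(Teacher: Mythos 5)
Your proposal is correct and is essentially the paper's own argument: the paper decomposes $Z(t)=\int_0^s (t-u)^{\alpha}u^{-\gamma/2}B(du)+\int_s^t (t-u)^{\alpha}u^{-\gamma/2}B(du)$, notes that the first term is $\sigma(B(r):r\le s)$-measurable while the second is independent of it, and uses $\sigma(Z(r):r\le s)\subseteq\sigma(B(r):r\le s)$ to get $\Var(Z(t)\mid Z(r):r\le s)\ge \int_s^t (t-u)^{2\alpha}u^{-\gamma}du \ge \frac{(t-s)^{2\alpha+1}}{(2\alpha+1)b^{\gamma}}$, which is exactly your projection-onto-$\mathcal K_s$ argument phrased with $\sigma$-algebras instead of Hilbert-space distances. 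One small remark, shared with the paper (whose proof of (b) is omitted as ``similar''): for $Z'$ the innovation variance carries the factor $\alpha^2$, so the computation literally yields the constant $\alpha^2/\big((2\alpha-1)b^{\gamma}\big)$, which for $\alpha\in(1/2,1)$ is smaller than the constant $1/\big((2\alpha-1)b^{\gamma}\big)$ stated in \eqref{eq one-sided2} — a harmless discrepancy since all later uses of the SLND are up to constant factors.
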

   \begin{proof}    Assume that $0 \le s< t\le b$. 
   We write $Z(t)$ as 
   \begin{align*}
   Z(t)=& \int_0^s (t-u)^{\alpha}u^{-\frac{\gamma}{2}}B(du)+\int_s^t (t-u)^{\alpha}u^{-\frac{\gamma}{2}}B(du).
   \end{align*}
   The first term is measurable with respect to $\sigma\big(B(r):r\le s\big)$ and the second term is independent of $\sigma\big(B(r):r\le s\big)$. 
   Since $\sigma\big(Z(r): r\le s\big) \subseteq \sigma\big(B(r): r\le s\big)$, 
   we have
\begin{align*}
\Var(Z(t)| Z(r): r\le s)\ge  \,\Var(Z(t)| B(r): r\le s) 
=  \, \int_s^t(t-u)^{2\alpha}u^{-\gamma}du.
\end{align*}
This implies  \eqref{eq one-sided}.

The proof of \eqref{eq one-sided2} is similar. The details are omitted. The proof is complete. 
\end{proof}

\section{Lamperti's transformation of $Z$}
\label{sec:Lamperti}
Inspired by \cite{TX2007}, we   consider the centered stationary Gaussian process $U =\{U (t)\}_{t\in\mathbb R}$ 
defined through  Lamperti's transformation of $Z$:
 \begin{align}\label{Eq:UZ}
 U(t):=e^{-tH} Z(e^t)  \ \ \ \text{ for all } t\in \mathbb R.
 \end{align} 
Let   $r_U(t):=\mathbb E \big[U (0)U (t)\big]$ be 
 the covariance function of $U$.  By Bochner's theorem, $r_U$ is the Fourier transform of a finite measure $F_U$ 
 which is called the spectral measure of $U$. Notice that $r_U(t)$ is an even function and 
  \begin{align}\label{eq rt}
 r_U(t)=&\, e^{-tH}\int_0^{1\wedge e^t}(e^t-u)^{\alpha}(1-u)^{\alpha} u^{-\gamma}du \ \ \ \ \text{for all } t\in \mathbb R.
 \end{align}
 We can verify that  $r_U(t)=O(e^{- t(1-\gamma)/2})$ as $t\rightarrow \infty$. 
 It follows that $r_U(\cdot)\in L^1(\mathbb R)$. Hence the spectral measure $F_U$ has a continuous spectral 
 density function $f_U$ which can be represented as the inverse Fourier transform of $r_U(\cdot)$:
 $$
 f_U(\lambda)=\frac1\pi\int_0^{\infty}r_U(t)\cos(t\lambda)\,dt  \quad \hbox{ for all } \, \lambda \in \mathbb R.
 $$   
  It is known that $U $ has the stochastic integral representation:
   \begin{align}\label{eq U int}
   U(t)=\int_{\mathbb R} e^{i\lambda t}\, W(d\lambda)    \ \ \ \ \text{for all } t\in \mathbb R,
   \end{align}
   where $W$ is a complex Gaussian measure with control measure $F_U$. Then for any $s, t \in \mathbb R$,
   \begin{equation}\label{Eq:Uvar}
   \begin{split}
    \mathbb E \big[ \big(U (s) -U (t)\big)^2\big] &= 2 \big( r_U(0) - r_U(t-s) \big)\\
    &=2 \int_{\mathbb R} \big[1 - \cos \big((s-t) \lambda\big)\big]\, f_U(\lambda) d \lambda.
    \end{split}
    \end{equation}
   
The following lemma provides bounds for $ \mathbb E \left[ \left(U (s) -U (t)\right)^2\right]$  when $Z$ has rough 
(fractal) sample paths.
 \begin{lemma}\label{lem r}  
 Assume  $\alpha\in (-1/2, 1/2]$. Then for any $b > 0$, there exist positive constants $\varepsilon_0$, $c_{4,1}$ 
 and $c_{4,2}$ such that  for all $ s, t \in [0, b]$ with $|s-t| \le \varepsilon_0$,
  \begin{equation}\label{eq r}
  \begin{split}
 c_{4,1}|s-t|^{2 \alpha+1}\left(1+ \left|\ln |t-s| \right|\right)^\eta &\le \mathbb E \left[ \left(U (s) -U (t)\right)^2\right] \\
 &\le  
 c_{4,2}|s-t|^{2 \alpha+1}\left(1+ \left|\ln |t-s| \right|\right)^\eta,
  \end{split}
 \end{equation}
  where $\eta = 0$ if  $\alpha\in (-1/2, 1/2)$ and $\eta=1$ if $\alpha = 1/2.$
 \end{lemma}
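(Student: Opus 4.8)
The plan is to transfer the estimate from the increments of $U$ to those of the self-similar process $Z$ through the Lamperti relation $U(t)=e^{-tH}Z(e^t)$, and then invoke the increment bounds of Lemma \ref{Lem:VarioZ}. Fix $0\le s<t\le b$ and write $h=t-s$. Factoring out $e^{-sH}$ gives
\[
\mathbb E\left[(U(s)-U(t))^2\right]=e^{-2sH}\,\mathbb E\left[\left(Z(e^s)-e^{-hH}Z(e^t)\right)^2\right],
\]
and I would expand the inner expectation by the elementary splitting $Z(e^s)-e^{-hH}Z(e^t)=\left(Z(e^s)-Z(e^t)\right)+\left(1-e^{-hH}\right)Z(e^t)$. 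This produces a main term $A:=\mathbb E[(Z(e^s)-Z(e^t))^2]$, a cross term built from $C:=\mathbb E[(Z(e^s)-Z(e^t))Z(e^t)]$, and a remainder controlled by $D:=\mathbb E[Z(e^t)^2]$.

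The core of the argument is the estimate of the main term $A$. Since $e^s\ge 1$ and, for $h<\ln(3/2)$, one has $e^s>2(e^t-e^s)$, the pair $(e^s,e^t)$ falls into Case (ii) of Lemma \ref{Lem:VarioZ}. Using $e^t-e^s=e^s(e^h-1)\asymp e^s h$ and the identity $2\alpha+1-\gamma=2H$, I expect
\[
A\asymp \frac{(e^t-e^s)^{2\alpha+1}}{(e^s)^{\gamma}}\big(1+|\ln h|\big)^{\eta}\asymp e^{2sH}\,h^{2\alpha+1}\big(1+|\ln h|\big)^{\eta},
\]
where the factor $\big(1+|\ln h|\big)^{\eta}$ arises precisely from the term $1+\ln|e^s/(e^t-e^s)|=1+\ln|1/(e^h-1)|\asymp 1+|\ln h|$ in the $\alpha=1/2$ case of Lemma \ref{Lem:VarioZ}. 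Multiplying by $e^{-2sH}$, the factor $e^{2sH}$ cancels exactly, giving $e^{-2sH}A\asymp h^{2\alpha+1}(1+|\ln h|)^{\eta}$ with constants depending only on $\alpha,\gamma,b$. This recovers the claimed two-sided order, and it also explains why the final estimate is a function of $|s-t|$ alone, as it must be by stationarity of $U$.

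It then remains to show that the cross and remainder contributions are negligible. Using $D=(e^t)^{2H}\mathbb E[Z(1)^2]\asymp e^{2sH}$, the Cauchy--Schwarz bound $|C|\le\sqrt{AD}$, and $|1-e^{-hH}|\lesssim h$, I would bound the cross term (after the factor $e^{-2sH}$) by $O\big(h^{\alpha+3/2}(1+|\ln h|)^{\eta/2}\big)$ and the remainder by $O(h^{2})$. Comparing with the main order $h^{2\alpha+1}(1+|\ln h|)^{\eta}$, both ratios tend to $0$ as $h\downarrow0$ when $\alpha<1/2$, since the exponents satisfy $\alpha+3/2>2\alpha+1$ and $2>2\alpha+1$. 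The delicate point, and the main obstacle, is the boundary case $\alpha=1/2$: there the error exponents coincide with $2\alpha+1=2$, so the decay is supplied solely by the logarithmic gaps $(1+|\ln h|)^{-1/2}\to0$ and $(1+|\ln h|)^{-1}\to0$. Consequently one may choose $\varepsilon_0\le\ln(3/2)$ small enough (depending on $b$) so that for $|s-t|\le\varepsilon_0$ the combined error is at most half of $e^{-2sH}A$, which yields $\tfrac12 e^{-2sH}A\le \mathbb E[(U(s)-U(t))^2]\le\tfrac32 e^{-2sH}A$ and hence \eqref{eq r}.
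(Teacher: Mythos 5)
Your proposal is correct and takes essentially the same route as the paper: both transfer the estimate through the Lamperti relation, reduce the increment of $U$ to the main term $\mathbb E\left[\left(Z(e^s)-Z(e^t)\right)^2\right]$ handled by Case (ii) of Lemma \ref{Lem:VarioZ}, and show the scaling-correction term $\left(1-e^{-hH}\right)Z(e^t)$ is of lower order (with the logarithmic gap carrying the case $\alpha=1/2$). The only cosmetic difference is that the paper first invokes stationarity of $U$ to set $s=0$ and uses the crude bounds $(x+y)^2\le 2\left(x^2+y^2\right)$ and $(x+y)^2\ge \tfrac12 x^2-y^2$, where you keep general $s$, cancel $e^{2sH}$ explicitly, and control the cross term by Cauchy--Schwarz.
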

   \begin{proof}   Since $U $ is stationary, it is sufficient to consider $\mathbb E \left[ \left(U (t) -U (0)\right)^2\right]$ for $t >0$.
    It follows from \eqref{Eq:UZ} and the elementary inequality $(x+y)^2 \le 2 (x^2 + y^2)$ that 
   \begin{equation}\label{Eq:Umin}
   \begin{split}
   \mathbb E \left[ \left(U (t) -U (0)\right)^2\right] 
   & = \mathbb E \left[ \left(  Z(e^t) -Z(1)  + (e^{-tH} -1) Z(e^t)\right)^2\right]\\
   &\le 2\left(\mathbb E \left[ \left(  Z(e^t) -Z(1) \right)^2\right] + \left(e^{-tH} -1\right)^2  \mathbb E \left[Z(e^t)^2\right]\right).
   \end{split}
   \end{equation}
This, together with  Lemma  \ref{Lem:VarioZ}, implies  that  the upper bound in \eqref{eq r} 
 holds for all $s, t \in [0, b]$  with $|s-t| \le \varepsilon_0$ for some positive constant $\varepsilon_0$. 
   
  On the other hand, the first equation in \eqref{Eq:Umin} and the inequality $(x+y)^2 \ge \frac 1 2 x^2 -  y^2$ imply
   \begin{equation}
   \begin{split}
   \mathbb E \left[ \left(U (t) -U (0)\right)^2\right]  \ge
 \frac 1 2 \mathbb E \left[ \left(  Z(e^t) -Z(1) \right)^2\right] - \left(e^{-tH} -1\right)^2  \mathbb E \left[Z(e^t)^2\right].\\
   \end{split}
   \end{equation}
 It follows from Lemma  \ref{Lem:VarioZ}   that the lower bounds in \eqref{eq r}  holds if $t>0$ is small enough, say, 
 $0 < t \le \varepsilon_0$. This completes the proof of Lemma \ref{lem r}.
\end{proof}
       
The following are truncation inequalities in Lo\'eve \cite[Page 209]{Lo} that are expressed in terms of the 
spectral density function $f_U$:  for any $u > 0$ we have
\begin{equation*}
\begin{split}
 \int_{|\lambda| < u}\ \lambda^2 f_U(\lambda) d \lambda &\le K u^2 \int_{\mathbb R} (1 - \cos (\lambda/u))
f_U(\lambda) d \lambda,\\
\int_{|\lambda| \ge u}\  f_U(\lambda)\,d \lambda &\le K  u \int_0^{
1/u} dv \int_{\mathbb R} \big(1 - \cos (v\lambda)\big) f_U(\lambda)d \lambda.
\end{split}
\end{equation*}
By these inequalities, (\ref{Eq:Uvar})  and the upper bound in Lemma \ref{lem r},  we have the 
following properties of the spectral density $f_U(\lambda)$ at the origin and infinity, respectively.
 
 \begin{lemma}\label{lem spectral}  
Assume  $\alpha\in (-1/2, 1/2]$. There exist positive constants $u_0$, 
$c_{4,3}$ and $c_{4,4}$ such that for any $u >u_0$,
 \begin{align}\label{Eq: sp1}
 \int_{|\lambda|<u}\lambda^2 f_U(\lambda) d\lambda\le c_{4,3} u^{1-2\alpha} \left(1+ |\ln u |\right)^\eta
 \end{align}
   and 
  \begin{align}\label{Eq: sp2}
 \int_{|\lambda|\ge u}  f_U(\lambda) d\lambda\le c_{4,4} u^{-(2\alpha+1)} \left(1+ |\ln u |\right)^\eta.
 \end{align}
 In the above,  $\eta = 0$ if  $\alpha\in (-1/2, 1/2)$ and $\eta=1$ if $\alpha = 1/2.$
 \end{lemma}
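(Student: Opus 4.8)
The plan is to substitute the increment-variance bound from Lemma \ref{lem r} into the two Lo\'eve truncation inequalities stated above, after recognizing via \eqref{Eq:Uvar} that the right-hand sides of those inequalities are themselves increment variances of $U$. First I would set $u_0 = 1/\varepsilon_0$, where $\varepsilon_0$ is the constant furnished by Lemma \ref{lem r}; then for every $u > u_0$ and every lag $v \in (0, 1/u]$ one has $v \le \varepsilon_0$, so Lemma \ref{lem r} is applicable throughout.

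For \eqref{Eq: sp1}, I would use \eqref{Eq:Uvar} with time lag $1/u$ to write
$$\int_{\mathbb R}\big(1 - \cos(\lambda/u)\big) f_U(\lambda)\, d\lambda = \tfrac12\, \mathbb E\big[(U(1/u) - U(0))^2\big],$$
which by the upper bound in Lemma \ref{lem r} is at most $\tfrac{c_{4,2}}{2}\, u^{-(2\alpha+1)}(1 + |\ln u|)^\eta$, using $|\ln(1/u)| = \ln u$ for $u > 1$. Inserting this into the first truncation inequality and multiplying by $K u^2$ yields \eqref{Eq: sp1} with $c_{4,3} = K c_{4,2}/2$, since $u^2 \cdot u^{-(2\alpha+1)} = u^{1-2\alpha}$.

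For \eqref{Eq: sp2}, the second truncation inequality involves an extra integration over $v \in (0, 1/u)$. By \eqref{Eq:Uvar} and the upper bound in Lemma \ref{lem r}, the inner integral is dominated by $\tfrac{c_{4,2}}{2}\, v^{2\alpha+1}(1 + |\ln v|)^\eta$, so the task reduces to estimating $\int_0^{1/u} v^{2\alpha+1}(1 + |\ln v|)^\eta\, dv$. When $\eta = 0$ this is the elementary integral $\tfrac{1}{2\alpha+2}\, u^{-(2\alpha+2)}$; when $\eta = 1$ (that is, $\alpha = 1/2$) a single integration by parts produces a leading term of order $u^{-(2\alpha+2)}(1 + \ln u)$. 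Multiplying by $K u$ then gives the required bound $c_{4,4}\, u^{-(2\alpha+1)}(1 + |\ln u|)^\eta$, since $u \cdot u^{-(2\alpha+2)} = u^{-(2\alpha+1)}$.

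The argument is essentially a direct substitution, and the only point demanding care is the critical case $\alpha = 1/2$: there the logarithmic factor $(1 + |\ln v|)$ must be carried through the $dv$-integration by parts so as to recover exactly the exponent $\eta = 1$ in the final estimate. Everything else is routine power-law bookkeeping.
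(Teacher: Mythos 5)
Your proposal is correct and follows exactly the route the paper intends: the paper gives no separate proof of Lemma \ref{lem spectral}, stating only that it follows from Lo\'eve's truncation inequalities, the spectral identity \eqref{Eq:Uvar}, and the upper bound of Lemma \ref{lem r}, which is precisely your substitution argument. Your handling of the critical case $\alpha=1/2$ (carrying the factor $1+|\ln v|$ through the $dv$-integration, with $\int_0^{1/u} v^{2}\left(1+\ln(1/v)\right)dv = O\left(u^{-3}(1+\ln u)\right)$) and the choice $u_0=\max\{1/\varepsilon_0,1\}$ are exactly the details the paper leaves implicit.
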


 \section{Small ball probabilities of $Z$}
   
By Lemma  \ref{Lem:VarioZ}, Lemma \ref{lem expect} and Proposition \ref{lem SLND}, we prove the following estimates 
on the small ball probabilities of the Gaussian process  $Z$ and its derivative $Z'$ when it exists. 
  \begin{proposition}\label{lem small} 
   \begin{itemize}
   \item[(a).]\,
 Assume $\gamma \in [0, 1)$ and $\alpha\in(-1/2+\gamma/2,\,1/2)$. There exist constants $c_{5,1}, \,
    c_{5, 2}\in(0,\infty)$ such that for all $r>0, 0<\varepsilon<1$,
   \begin{equation} \label{Eq:sb1}
   \begin{split}
  \exp\bigg(- c_{5,1}\Big(\frac{r^{H}}{\varepsilon}\Big)^{\frac 2 { 2\alpha+ 1}}\bigg)\le 
  \mathbb P\bigg\{\sup_{s\in [0,\, r]}|Z(s)|\le \varepsilon \bigg\}
  \le \exp\bigg(- c_{5,2}\Big(\frac{r^{H}}{\varepsilon}\Big)^{\frac{2}{ 2\alpha+1}}\bigg).
  \end{split}
   \end{equation} 
  \item[(b).]\,
  Assume $\gamma \in [0, 1)$ and $\alpha \in(1/2+\gamma/2, 3/2)$. 
  There exist constants $c_{5,3}, c_{5, 4}\in (0,\infty)$ such that for all $r>0, 0<\varepsilon<1$,
  \begin{equation} \label{Eq:sb2}
  \begin{split}
 \exp\bigg(- c_{5,3}\Big(\frac{r^{\tilde H}}{\varepsilon}\Big)^{\frac 2 { 2\alpha- 1}}\bigg)\le  
 \mathbb P\bigg\{\sup_{s\in [0,\, r]}|Z'(s)|\le \varepsilon \bigg\}
\le \exp\bigg(- c_{5,4}\Big(\frac{r^{\tilde H}}{\varepsilon}\Big)^{\frac{2}{ 2\alpha-1}}\bigg),
 \end{split}
  \end{equation}
  where $\tilde{H} = \alpha-\gamma/2-1/2$.
 \end{itemize} 
 \end{proposition}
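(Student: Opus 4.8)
The plan is to prove part (a) directly and then deduce part (b) from it. By the self-similarity of $Z$ with index $H$, for any $r>0$ one has $\sup_{s\in[0,r]}|Z(s)| \overset{d}{=} r^H\sup_{s\in[0,1]}|Z(s)|$, so that $\mathbb P\{\sup_{s\in[0,r]}|Z(s)|\le\varepsilon\} = \mathbb P\{\sup_{s\in[0,1]}|Z(s)|\le \delta\}$ with $\delta:=\varepsilon\,r^{-H}$ and $\delta^{-1}=r^{H}/\varepsilon$. Thus it suffices to establish, for the unit interval,
\[
\exp\!\big(-c_{5,1}\delta^{-2/(2\alpha+1)}\big)\le \mathbb P\{\sup_{s\in[0,1]}|Z(s)|\le\delta\}\le \exp\!\big(-c_{5,2}\delta^{-2/(2\alpha+1)}\big),
\]
the substantive regime being $\delta$ small. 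I emphasize at the outset that the exponent is governed by $\alpha+\tfrac12$ rather than $H$: this reflects that the local (metric) behaviour of $Z$ scales like $|t-s|^{\alpha+1/2}$, whereas $H$ enters only through the reduction above and the summation in the lower bound.

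For the upper bound I would use the one-sided strong local nondeterminism of Proposition \ref{lem SLND}(a). Fix a spacing $h\in(0,1)$, set $t_k=kh$ for $1\le k\le n:=\lfloor 1/h\rfloor$, and bound
\[
\mathbb P\{\sup_{[0,1]}|Z|\le\delta\}\le \mathbb P\Big\{\bigcap_{k=1}^n\{|Z(t_k)|\le\delta\}\Big\} = \mathbb E\Big[\prod_{k=1}^n \mathbb P\big(|Z(t_k)|\le\delta\,\big|\,Z(t_1),\dots,Z(t_{k-1})\big)\Big].
\]
Given the past, $Z(t_k)$ is Gaussian with conditional variance at least $\Var(Z(t_k)\mid Z(r):r\le t_{k-1})\ge (2\alpha+1)^{-1}h^{2\alpha+1}$ by Proposition \ref{lem SLND}(a) (with $b=1$), so each conditional probability is at most $C\,\delta\,h^{-(\alpha+1/2)}$. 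Choosing $h\asymp \delta^{2/(2\alpha+1)}$ makes this factor a constant strictly below $1$, while $n\asymp h^{-1}\asymp \delta^{-2/(2\alpha+1)}$, which yields the upper bound $\exp(-c\,\delta^{-2/(2\alpha+1)})$.

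For the lower bound the key obstacle is the behaviour near the origin: by Lemma \ref{Lem:VarioZ} the canonical metric of $Z$ carries a factor $s^{-\gamma/2}$ that blows up as $s\downarrow0$, so $[0,1]$ is not a clean power-type metric space and an entropy lower bound cannot be applied to it directly. I would circumvent this using self-similarity and a dyadic decomposition. First, on $[1/2,1]$, where Remark \ref{Re:1}(i) gives $\mathbb E[(Z(t)-Z(s))^2]\asymp|t-s|^{2\alpha+1}$, the upper bound on the canonical metric together with Talagrand's general lower bound for Gaussian small ball probabilities (see \cite{LiShao01}) yields $\mathbb P\{\sup_{[1/2,1]}|Z|\le\eta\}\ge \exp(-c\,\eta^{-2/(2\alpha+1)})$ for all $\eta>0$. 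Next, writing $[0,1]=\{0\}\cup\bigcup_{k\ge0}[2^{-k-1},2^{-k}]$ and noting that each event $\{\sup_{[2^{-k-1},2^{-k}]}|Z|\le\delta\}$ is a symmetric convex set in path space, the Gaussian correlation inequality gives
\[
\mathbb P\{\sup_{[0,1]}|Z|\le\delta\}\ge \prod_{k\ge0}\mathbb P\big\{\sup_{[2^{-k-1},2^{-k}]}|Z|\le\delta\big\}.
\]
By self-similarity $\sup_{[2^{-k-1},2^{-k}]}|Z|\overset{d}{=}2^{-kH}\sup_{[1/2,1]}|Z|$, so the $k$-th factor is at least $\exp(-c(2^{kH}\delta)^{-2/(2\alpha+1)})$; summing the geometric series $\sum_{k\ge0}2^{-2kH/(2\alpha+1)}<\infty$ (convergent precisely because $H>0$) produces the desired lower bound $\exp(-c'\delta^{-2/(2\alpha+1)})$.

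Finally, part (b) should follow from part (a) with essentially no new work: by Lemma \ref{lem expect} the derivative $Z'$ is itself a generalized Riemann-Liouville FBM with indices $\alpha-1$ and $\gamma$ and self-similarity index $\tilde H=\alpha-\tfrac12-\tfrac\gamma2$, and the hypothesis $\alpha\in(1/2+\gamma/2,3/2)$ is exactly $\alpha-1\in(-1/2+\gamma/2,1/2)$. Applying part (a) to $Z'$ (its increment estimates coming from Lemma \ref{Lem:VarioZ} with $\alpha$ replaced by $\alpha-1$, and its one-sided SLND from Proposition \ref{lem SLND}(b)) gives \eqref{Eq:sb2} with exponent $2/(2(\alpha-1)+1)=2/(2\alpha-1)$. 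The main difficulty throughout is the lower bound near the origin, which is resolved by the self-similar dyadic decomposition and the Gaussian correlation inequality described above.
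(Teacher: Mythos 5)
Your proposal is correct in substance, but it follows a genuinely different route from the paper, most visibly in the lower bound. The paper applies Talagrand's lower bound (via the Ledoux reformulation, Lemma \ref{Lem:Ta93}) \emph{once} on all of $S=[0,1]$, handling the inhomogeneity of the canonical metric near the origin by constructing an explicit non-uniform covering: points $t_1=\varepsilon^{1/H}$, $t_n=t_{n-1}+t_{n-1}^{\gamma/(2\alpha+1)}\varepsilon^{2/(2\alpha+1)}$, whose cardinality is controlled by solving the recursion $a_n=a_{n-1}+a_{n-1}^{\gamma/(2\alpha+1)}$ (giving $a_n\asymp n^{1/\beta}$ with $\beta=2H/(2\alpha+1)$, hence $N_\varepsilon([0,1])\lesssim \varepsilon^{-2/(2\alpha+1)}$). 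You instead localize to dyadic blocks $[2^{-k-1},2^{-k}]$, where the metric is homogeneous, apply Talagrand only on the reference block $[1/2,1]$, and glue with the Gaussian correlation inequality plus self-similarity, the geometric series $\sum_k 2^{-2kH/(2\alpha+1)}$ converging exactly because $H>0$. Both work; the paper's version avoids invoking the GCI (a heavy tool, though now a theorem of Royen) at the cost of the covering recursion, while yours makes the role of $H>0$ completely transparent and is arguably more modular. For the upper bound, your sequential-conditioning argument (equally spaced points, conditional variance bounded below by the one-sided SLND of Proposition \ref{lem SLND}, each conditional factor $\le C\delta h^{-(\alpha+1/2)}$, $h\asymp\delta^{2/(2\alpha+1)}$) is precisely the mechanism behind Theorem 2.1 of Monrad and Rootz\'en, which the paper simply cites after verifying its hypotheses; you reprove it directly, which is fine. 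The reduction of part (b) to part (a) via $Z'$ being a generalized Riemann--Liouville FBM with indices $\alpha-1\in(-1/2+\gamma/2,1/2)$ and $\gamma$ is exactly the paper's argument.

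One detail you should patch: Lemma \ref{Lem:Ta93} bounds the probability that the \emph{oscillation} $\sup_{s,t\in S}|Z(s)-Z(t)|$ is small. The paper applies it with $0\in S$, where $Z(0)=0$ makes the oscillation dominate $\sup_s|Z(s)|$; your block $[1/2,1]$ does not contain the origin, so you must additionally pin down $|Z(1/2)|$, e.g.\ intersect with the symmetric slab $\{|Z(1/2)|\le \eta/2\}$ and use the Khatri--\v{S}id\'ak inequality, which costs only a factor $\gtrsim \eta$ and is absorbed into the exponential. Likewise, iterating the two-set GCI to countably many symmetric convex sets, and checking that the per-block bound $\exp(-c\,\eta^{-2/(2\alpha+1)})$ holds for \emph{all} $\eta>0$ (large $\eta$ via Gaussian concentration, adjusting $c$), are routine but should be stated.
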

 
\begin{remark}  {\rm The following are two remarks about Proposition \ref{lem small}.
 \begin{itemize}
\item Notice that the case of $\alpha = 1/2$ is excluded in Proposition \ref{lem small}.
The reason is that,   the    bounds of the one-sided SLND in Proposition \ref{lem SLND} 
     and the optimal bounds in  \eqref{Eq:expect1a} do not coincide when $\alpha = 1/2$. The method for proving  \eqref{Eq:sb1} will not be able 
to prove optimal upper and lower bounds in the case of $\alpha = 1/2$. 

\item In Part (b), we assume that the self-similarity index $\tilde{H} = \alpha-\gamma/2-1/2$ of $Z'$ 
is positive.  When $\tilde{H} \le 0$, \eqref{Eq:sb2} does not hold. In fact, by using $\mathbb E[Z'(t)^2] 
= \mathbb E[Z'(1)^2] \, t^{2\tilde{H}}$, one sees that for any $0 < \tau \le r$, 
$$\mathbb P\bigg\{\sup_{s\in [0,\, r]}|Z'(s)|\le \varepsilon \bigg\} \le \mathbb P\big\{|Z'(\tau)|\le \varepsilon \big\} = 
 \mathbb P\left\{|Z'(1)|\le \tau^{-\tilde{H}} \varepsilon  \right\}.$$
If $\tilde{H} < 0$, then the last probability goes to 0 as $\tau \to 0$. This implies that for all $r>0$ and $\varepsilon>0$
we have $\mathbb P\big\{\sup_{s\in [0,\, r]}|Z'(s)| \le \varepsilon \big\} = 0$. 

When $\tilde{H} = 0$, the self-similarity implies that the small ball probability in \eqref{Eq:sb2} does not depend 
on $r$. We will let $r \to \infty$ to show that this probability is in fact 0 for all $\varepsilon>0$. To this end, we 
consider the centered stationary Gaussian process $V= \{V(s)\}_{s \in \mathbb R}$ defined by $V(s) = Z'(e^{s})$, 
which is the Lamperti transform of $Z'$, and apply Theorem 5.2 of Pickands \cite{Pickands67}. Notice that the 
covariance function of $V$,
\[
r_V(s) = \int_0^1 (1 - u)^{\alpha-1}(e^s - u)^{\alpha-1}u^{-\gamma}\, du, \ \ \text{for } s>0. 
\]
 Then  $r_V(s) \le 2^{1-\alpha}\, e^{(\alpha - 1)s}$ for all $s $ large enough. Since $\alpha = \frac{1+\gamma}  2 < 1$, 
we have 
$\lim\limits_{s \to \infty} r_V(s) \ln s = 0$, so the condition of \cite[Theorem 5.2]{Pickands67} is 
satisfied. Hence
\[
\liminf_{t \to \infty} \bigg(\sup_{s\in [0,\, t]} V(s) - \sqrt{2 \ln t }\bigg) \ge 0, \quad \hbox{ a.s.}
\]
This implies that for all $\varepsilon>0$,
$$\mathbb P\bigg\{\sup_{s\in [0,\, r]}|Z'(s)|\le \varepsilon \bigg\} 
= \lim\limits_{t \to \infty} \mathbb P\bigg\{\sup_{s\in [0,\, t]}|Z'(s)|\le \varepsilon \bigg\} = 0.$$ 
\end{itemize}
}
 \end{remark}
 
For proving  the lower bound in \eqref{Eq:sb1}, we apply the general lower bound on the small ball probability
of Gaussian processes due to Talagrand (cf. Lemma 2.2 of \cite{Tal95}). We will make use of the following  
reformulation of Talagrand's lower bound given by Ledoux \cite[(7.11)-(7.13) on Page 257]{Ledoux}.
 
\begin{lemma} \label{Lem:Ta93}
Let $\{ Z(t)\}_{t \in S }$ be a separable, real-valued, centered Gaussian process
indexed by a bounded set $S$ with the canonical metric $d_Z(s, t) = (\mathbb E |Z(s) - Z(t)|^2)^{1/2}$.
Let $N_\varepsilon(S)$ denote the  smallest number of $d_Z$-balls of radius $\varepsilon$
needed to cover $S$. If there is a decreasing function $\psi : (0, \delta] \to (0, \infty)$ such that 
$N_\varepsilon(S) \le \psi(\varepsilon)$ for all $\varepsilon \in (0, \delta]$ and there are constants 
$K_2 \ge K_1 > 1$ such that
\begin{equation}\label{Eq:Covering}
K_1 \psi (\varepsilon) \le \psi (\varepsilon/2) \le K_2 \psi (\varepsilon)
 \end{equation}
for all $\varepsilon \in (0, \delta]$, then there is a constant $K\in (0,\infty)$ depending only
on $K_1$, $K_2$ and $d_Z$ such that for all $u \in (0, \delta)$,
\begin{equation}\label{Eq:SB1}
\mathbb P\bigg\{ \sup_{s, t \in S} |Z(s) - Z(t)| \le u \bigg\} \ge \exp \big(-K \psi(u) \big).
\end{equation}
\end{lemma}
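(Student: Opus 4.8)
The plan is to follow Talagrand's lower bound for Gaussian small ball probabilities (cf. \cite{Tal95} and the reformulation in \cite{Ledoux}), whose engine is a multiscale decoupling of the event $\{\sup_{s,t\in S}|Z(s)-Z(t)|\le u\}$ into one-dimensional pieces that are then recombined using a Gaussian correlation inequality. First I would reduce the global oscillation to two contributions: the oscillation of $Z$ over a fixed net at the target scale $u$, and the oscillations of $Z$ inside the radius-$u$ balls of that net. Concretely, fix a minimal $d_Z$-net $S_0=\{t_1,\dots,t_N\}$ with $N=N_u(S)\le\psi(u)$; then $\sup_{s,t}|Z(s)-Z(t)|$ is controlled by $\max_{i,j}|Z(t_i)-Z(t_j)|$ plus twice the maximal within-ball oscillation. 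The whole scheme of events is a family of symmetric slabs $\{|\cdot|\le(\text{budget})\}$ for a single centered Gaussian vector, so \v{S}id\'ak's inequality (the relevant case of the Gaussian correlation inequality) lets me bound the probability of their intersection from below by the product of the individual probabilities, after which I take logarithms and sum.

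For the net part I would chain $S_0$ from the diameter of $S$ down to the scale $u$: for the dyadic scales $\varepsilon_j=\delta 2^{-j}$ coarser than $u$ pick minimal nets $S_j$ with $|S_j|\le\psi(\varepsilon_j)$ and maps $\pi_j$ with $d_Z(s,\pi_j(s))\le\varepsilon_j$, and write $Z(t_i)-Z(t_1)$ as a telescoping sum of ``link'' increments $Z(\pi_{j+1})-Z(\pi_j)$, each a centered Gaussian of variance $\lesssim\varepsilon_j^2$, with at most $\psi(\varepsilon_{j+1})$ distinct links at scale $j$. Assigning each link at scale $j$ a budget $\eta_j$ that decays geometrically away from the target scale (so that $\sum_j\eta_j\le u/4$), \v{S}id\'ak gives a product bound whose negative logarithm is $\sum_j\psi(\varepsilon_{j+1})\cdot\big(-\ln\mathbb P\{|\xi|\le\eta_j\}\big)$ with $\mathbb E\xi^2\lesssim\varepsilon_j^2$. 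Here the doubling hypothesis \eqref{Eq:Covering} with $K_1>1$ is decisive: it forces $\psi(\varepsilon_{j+1})$ to decay geometrically as the scale coarsens, so that even though the per-link cost grows only logarithmically, the whole series is dominated by its value at the target scale and is $\lesssim\psi(u)$. For the within-ball part I would estimate the expected oscillation inside a radius-$u$ ball by Dudley's entropy integral $\int_0^u\sqrt{\ln N_\varepsilon(\text{ball})}\,d\varepsilon$; using the \emph{local} covering numbers and the doubling condition this is $\lesssim u$, so by the Borell--TIS inequality each event $\{\text{within-ball oscillation}\le Cu\}$ has probability bounded below by a positive constant, and the Gaussian correlation inequality combines the $\le\psi(u)$ balls at total cost $\lesssim\psi(u)$. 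Since changing $u$ by a fixed constant factor only changes $\psi(u)$ by a constant factor (again by doubling), one may freely replace the constant $C$ above by $1$, giving \eqref{Eq:SB1}.

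The hard part will be the correct balancing of the budget $\{\eta_j\}$ across scales, which is exactly where small ball lower bounds differ from ordinary tail estimates. Requiring all fine-scale link increments to be uniformly small has divergent cost (the covering numbers grow geometrically as $\varepsilon\downarrow0$), while the coarse scales carry large variance and must not absorb the whole budget $u$; neither the chaining part nor the concentration part works in isolation, and it is the regularity condition $K_1\psi(\varepsilon)\le\psi(\varepsilon/2)\le K_2\psi(\varepsilon)$ that simultaneously makes the coarse-scale sum a convergent geometric series and keeps the within-ball entropy integral $\lesssim u$. I expect the bookkeeping of constants (tracking that $K$ depends only on $K_1$, $K_2$ and $d_Z$) to be the most delicate routine step. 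Finally, I note that in the intended application (Proposition \ref{lem small}, where $d_Z$ is a power of $|s-t|$ and hence $\psi$ is a pure power of $\varepsilon$) the doubling condition holds automatically with explicit $K_1,K_2$, and all the series above become explicit geometric sums, so the abstract lemma applies directly.
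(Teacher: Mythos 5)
Your overall architecture --- multiscale decoupling via \v{S}id\'ak's inequality --- is the standard route to this lemma, which the paper itself does not reprove: it simply quotes Talagrand's bound (Lemma 2.2 of \cite{Tal95}) in the reformulation of Ledoux \cite[(7.11)--(7.13)]{Ledoux}. Your coarse-scale half is essentially correct, including the key observation that the lower doubling bound $K_1\psi(\varepsilon)\le\psi(\varepsilon/2)$ with $K_1>1$ makes the net sizes decay geometrically as the scale coarsens, so that the per-link cost $-\ln\mathbb P\{|\xi|\le\eta_j\}\asymp\ln(\varepsilon_j/\eta_j)$, which grows only linearly in the number of scales above $u$, is summable against $\psi(u)K_1^{-k}$.

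The genuine gap is in your within-ball (fine-scale) step. You claim that the Dudley integral of a radius-$u$ ball is $\lesssim u$ ``using the local covering numbers and the doubling condition,'' so that Borell--TIS makes each within-ball event have probability bounded below by a positive constant. But the hypothesis only bounds the \emph{global} covering numbers: $N_\varepsilon(S)\le\psi(\varepsilon)$ gives no better bound than $N_\varepsilon(B(t,u))\le\psi(\varepsilon)$ for a ball, and the homogeneity $N_\varepsilon(B(t,u))\lesssim\psi(\varepsilon)/\psi(u)$ that your computation needs is an extra assumption. It can fail badly: take $S=\{1,\dots,n\}$ with $Z(1),\dots,Z(n)$ i.i.d.\ centered Gaussian of variance $u^2/2$, so $d_Z(i,j)=u$ for $i\ne j$. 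Then $S$ is a single $d_Z$-ball of radius $u$ and the hypotheses hold with $\psi(u)\asymp n$ (extend $\psi$ to satisfy \eqref{Eq:Covering}), yet $\mathbb E\sup_{s,t}|Z(s)-Z(t)|\asymp u\sqrt{\ln n}$ and $\mathbb P\{\sup_{s,t}|Z(s)-Z(t)|\le Cu\}=e^{-cn}=e^{-c\psi(u)}$ --- not a constant. (The lemma's conclusion is still true there precisely because there is only one ball; the correct accounting must trade the number of balls against the per-ball cost, which your constant-probability claim cannot do.) The standard repair, and what Talagrand's proof actually does, is to \emph{continue the \v{S}id\'ak chaining below scale $u$}: at $\varepsilon_j=u2^{-j}$, $j\ge0$, assign budgets $\eta_j$ with $\sum_j\eta_j\le u/4$ and $\eta_j/\varepsilon_j\to\infty$ fast (e.g.\ $\eta_j\asymp u(j+1)^{-2}$), so each link event has probability at least $1-2e^{-\eta_j^2/(2\varepsilon_j^2)}$, and the total fine-scale log-cost $\sum_{j\ge0}\psi(\varepsilon_{j+1})\,e^{-\eta_j^2/(2\varepsilon_j^2)}\lesssim\psi(u)$ because the super-geometric decay beats the geometric growth $\psi(\varepsilon_j)\le K_2^{\,j}\psi(u)$. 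In other words, at fine scales it is the \emph{upper} doubling constant $K_2$ --- not any local entropy bound --- that carries the estimate, dual to the role $K_1>1$ plays at coarse scales; with that substitution your argument becomes the Talagrand--Ledoux proof.
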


We are ready to prove Proposition \ref{lem small}.
 \begin{proof} We will only prove \eqref{Eq:sb1}. The proof of \eqref{Eq:sb2} is the same because $Z'$ is also 
 a generalized GFBM with indices    $\gamma \in [0, 1)$ and $\alpha-1 \in (-1/2 + \gamma/2,\, 1/2)$.
 
By the self-similarity property of $Z$, we know that  \eqref{Eq:sb1} is equivalent 
 to the following statement:  there exist constants $c_{5,1}, c_{5,2}\in(0,\infty)$ such that for all $0<\varepsilon<1$,
    \begin{equation} \label{Eq:sb12}
  \exp\bigg(- c_{5,1}\Big(\frac{1}{\varepsilon}\Big)^{\frac 2 {2 \alpha+ 1}}\bigg)\le  
  \mathbb P\bigg\{\sup_{s\in [0,\, 1]}|Z(s)|\le \varepsilon \bigg\}
  \le \exp\bigg(- c_{5,2}\Big(\frac{1}{\varepsilon}\Big)^{\frac{2}{2 \alpha+1}}\bigg).
   \end{equation}
   
 In order to prove the lower bound in \eqref{Eq:sb12}, we take $S = [0, 1]$ and apply Lemma \ref{Lem:Ta93}. 
 For any $\varepsilon \in (0, \, 1)$, we construct a covering of $[0, 1]$ by 
 sub-intervals of $d_Z$-radius $\varepsilon$, which will give an upper bound for $N_\varepsilon([0, 1]).$
 
Recall that  $\mathbb E\left[ Z(t)^2 \right] = c\, t^{2H}$ for all $t \ge 0$, where $H = \alpha  - \frac{\gamma} 2 + \frac 1 2>0$.
Since constants $c$ here and those in Lemma  \ref{Lem:VarioZ}  can be absorbed by the constants 
$c_{5,1}$ and $c_{5,2}$ in \eqref{Eq:sb12}, without loss of generality we will take these constants to be 1 (otherwise 
we consider the processes obtained by dividing $Z$ by the maximum and minimum of these constants, respectively, 
and prove the upper and lower bounds in \eqref{Eq:sb12} separately.)

Let $t_0=0, t_1 = \varepsilon^{1/H}$. For  any $n \ge 2$, if $t_{n-1}$ has been defined, we define
\begin{equation}\label{def:tn}
t_n = t_{n-1} + t_{n-1}^{\frac \gamma {2\alpha + 1}} \varepsilon^{\frac 2 {2\alpha + 1}}.
\end{equation}   
It follows from Lemma  \ref{Lem:VarioZ}  that
\begin{equation} \label{Eq:tn2}
\begin{split}
\mathbb E\left[(Z(t_n) - Z(t_{n-1}))^2\right] &\le  \frac {c} {t_{n-1}^\gamma} \big| t_n - t_{n-1}\big|^{2 \alpha + 1} 
\le  \varepsilon^2.
\end{split}
\end{equation}
Hence $d_Z(t_n,\, t_{n-1}) \le  \,\varepsilon$ for all $n \ge 1$.

Since $[0, 1]$ can be covered by the intervals $[t_{n-1}, \, t_n]$ for $n = 1, 2, \ldots, L_\varepsilon$, where 
$L_\varepsilon$ is the largest integer $n$ such that $t_n \le 1$, we have $N_\varepsilon([0, 1]) \le L_\varepsilon+1
\le 2L_\varepsilon$.

In order to estimate $L_\varepsilon$, we write $t_n = a_n \varepsilon^{1/H}$ for all $n \ge 1$.
Then, by \eqref{def:tn}, we have $a_1=1$,
\begin{equation}\label{Eq:an1}
a_n = a_{n-1} + a_{n-1}^{\frac \gamma {2\alpha + 1}}, \quad \forall \, n \ge 2.
\end{equation}

Denote by $\beta = 1 - \frac \gamma {2\alpha + 1} = \frac{2H}{2\alpha + 1}$. We claim that there exist positive and finite 
constants $c_{5,5} \le  2^{-\gamma/(2H\beta)}\beta^{1/\beta}$ and $c_{5,6} \ge 1$ such that 
\begin{equation}\label{Eq:an}
c_{5,5}\, n^{1/\beta} \le a_n \le c_{5,6}\, n^{1/\beta}, \qquad \forall \, n \ge 1.
\end{equation}
We postpone the proof of \eqref{Eq:an}. Let us estimate $L_\varepsilon$ and prove the lower bound in \eqref{Eq:sb12} first. 

By \eqref{Eq:an}, we have
\begin{equation*}\label{Eq:Lep}
L_\varepsilon = \max\left\{n: a_n \le \varepsilon^{- \frac 1 H}\right\} \le c_{5,5}^{-\beta}\, \varepsilon^{- \frac \beta H} 
= c_{5,5}^{-\beta}\, \varepsilon^{- \frac 2 {2\alpha + 1}}.
\end{equation*}
This implies that for all   $\varepsilon \in (0, 1)$,
\begin{align}\label{Eq: Ncov}
N_\varepsilon([0, 1]) \le 2 c_{5,5}^{-\beta}\, \varepsilon^{- \frac 2 {2\alpha + 1}} =: \psi(\varepsilon).
\end{align}
Since the function $\psi(\varepsilon)$ satisfies \eqref{Eq:Covering} with $K_1=K_2 = 2^{ \frac 2 {2\alpha + 1}}>1,$ 
we see that the lower bound in \eqref{Eq:sb12} follows from (\ref{Eq: Ncov}) and \eqref{Eq:SB1} in Lemma \ref{Lem:Ta93}.

Now we prove \eqref{Eq:an} by using induction. Clearly  \eqref{Eq:an} holds for $n=1$. Assume that it holds 
for $n=k$.  Then for $n=k+1$, it follows from (\ref{Eq:an1}) and (\ref{Eq:an}) that 
\[
a_{n+1} \le c_{5,6}\, n^{1/\beta} + \big(c_{5,6}\, n^{1/\beta})^{\frac \gamma {2\alpha + 1}} \le c_{5,6}\, (n+1)^{1/\beta}, 
\]
where the last inequality can be checked by using the mean-value theorem and the facts that $c_{5,6}\ge 1$ 
and $0< \beta < 1$.  This verifies the upper bound in \eqref{Eq:an}. The desired lower bound for $a_{n+1}$ 
is derived similarly using the mean-value  theorem and the fact that $c_{5,5} \le  2^{-\gamma/(2H\beta)}\beta^{1/\beta}$. 
Hence  the claim \eqref{Eq:an} holds.

Next, we prove the upper bound in \eqref{Eq:sb12}. Lemma   \ref{Lem:VarioZ} and Proposition \ref{lem SLND} 
show that the conditions of Theorem 2.1 of  Monrad and Rootz\'en \cite{MR1995} are satisfied. Hence the 
upper bound in \eqref{Eq:sb1} follows from Theorem 2.1 of \cite{MR1995}. The proof is complete.
 \end{proof}   
 
Similarly to the proof of Proposition \ref{lem small}, we can prove the following estimates on the small ball 
probabilities for the increments of $Z$ and $Z'$  at  points away from the origin. We will use these estimates 
to prove Chung's LILs for $Z$ and  $Z'$. Also, notice that no extra condition of $\alpha-\gamma/2-1/2>0$ is 
 needed for (b).
 
 \begin{proposition}\label{lem small2} 
   \begin{itemize}
   \item[(a).]\, Assume   $\alpha\in(-1/2,\,1/2)$.  Then   
   there exist constants $c_{5,7}, c_{5,8}\in (0,\infty)$  such that for all $t>0, r\in \big(0,t/2\big)$ 
   and $\varepsilon\in \left(0, r^{\alpha+1/2}\right)$,
   \begin{equation} \label{Eq:sb121}\
   \begin{split}
  \exp\bigg(-c_{5,7}\, r \, t^{-\frac{\gamma}{2\alpha+1}} \varepsilon^{-\frac2{2\alpha+1}}\bigg)\le & 
 \, \mathbb P\bigg\{\sup_{|s|\le r}|Z(t+s)-Z(t)|\le \varepsilon \bigg\}\\  & \le \,
 \exp\bigg(-c_{5,8}\, r\, t^{-\frac{\gamma}{2\alpha+1}}  \varepsilon ^{-\frac{2}{2\alpha+1}}\bigg).
  \end{split}
   \end{equation} 
   \item[(b).]\,
   Assume   $\alpha\in(1/2,\, 3/2)$.  Then there 
   exist constants $c_{5,9}, c_{5,10}\in (0,\infty)$  
   such that for all $t>0, r\in \big(0,t/2\big)$ and  $\varepsilon\in \left(0,r^{\alpha-1/2}\right)$,
   \begin{equation} \label{Eq:sb221}
   \begin{split}
  \exp\bigg(-c_{5,9}\, r\, t^{-\frac{\gamma}{2\alpha-1}} \varepsilon^{-\frac2{2\alpha-1}}\bigg)\le &
 \,  \mathbb P\bigg\{\sup_{|s|\le r}|Z'(t+s)-Z'(t)|\le \varepsilon \bigg\}\\
   & \, \le \exp\bigg(-c_{5,10}\, r\, t^{-\frac{\gamma}{2\alpha-1}} \varepsilon^{-\frac{2}{2\alpha-1}} \bigg).
  \end{split}
   \end{equation}  
   \end{itemize}
   \end{proposition}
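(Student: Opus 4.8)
The plan is to follow the proof of Proposition~\ref{lem small} closely, the one genuinely new ingredient being a sharp two-sided estimate for the canonical metric of the increment process localized around the fixed point $t$. Since $r\in(0,t/2)$, whenever $|s|,|s'|\le r$ the two points $t+s$ and $t+s'$ both lie in $[t/2,\,3t/2]$, so their $\gamma$-th powers are all comparable to $t^{\gamma}$. The first step is to record the estimate
\[
\mathbb E\big[(Z(t+s)-Z(t+s'))^2\big]\asymp t^{-\gamma}\,|s-s'|^{2\alpha+1},
\]
with implied constants independent of $t$. This is read off from Lemma~\ref{Lem:VarioZ}: when the ordered pair of points falls under Case~(i), the bounds in \eqref{Eq:expect10} already match to this order because the denominators there are comparable to $t^{\gamma}$; when it falls under Case~(ii), the equivalence \eqref{Eq:expect1a} for $\alpha<1/2$ gives precisely the same order. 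Consequently the canonical metric obeys $d_Z(t+s,t+s')\asymp t^{-\gamma/2}|s-s'|^{\alpha+1/2}$ uniformly in $t$, and it is exactly this localized scaling that produces the factor $t^{-\gamma/(2\alpha+1)}$ in \eqref{Eq:sb121}.

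For the lower bound I would apply Lemma~\ref{Lem:Ta93} to $\{Z(t+s)-Z(t)\}_{|s|\le r}$ with $S=[-r,r]$, just as in the proof of Proposition~\ref{lem small}. The metric estimate above lets us cover $S$ by $d_Z$-balls of radius $\varepsilon$ using sub-intervals of Euclidean length $\asymp t^{\gamma/(2\alpha+1)}\varepsilon^{2/(2\alpha+1)}$, so that $N_\varepsilon(S)\le\psi(\varepsilon):=C\,r\,t^{-\gamma/(2\alpha+1)}\varepsilon^{-2/(2\alpha+1)}$. Because $2\alpha+1<2$, the function $\psi$ satisfies the doubling condition \eqref{Eq:Covering} with $K_1=K_2=2^{2/(2\alpha+1)}>1$, so \eqref{Eq:SB1} gives the stated lower bound for $\mathbb P\{\sup_{|s|,|s'|\le r}|Z(t+s)-Z(t+s')|\le\varepsilon\}$; the bound for the anchored supremum in \eqref{Eq:sb121} then follows from the pointwise inequality $\sup_{|s|\le r}|Z(t+s)-Z(t)|\le\sup_{|s|,|s'|\le r}|Z(t+s)-Z(t+s')|$.

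For the upper bound I would use the one-sided SLND together with Theorem~2.1 of Monrad and Rootz\'en \cite{MR1995}. Applying Proposition~\ref{lem SLND}(a) with $b=3t/2$ gives, for ordered points in $[t/2,3t/2]$,
\[
\Var\big(Z(t+s)\,\big|\,Z(\rho):\rho\le t+s'\big)\ge \frac{c}{t^{\gamma}}\,|s-s'|^{2\alpha+1},
\]
which, combined with the upper moment bound, verifies the hypotheses of \cite[Theorem~2.1]{MR1995}. Discretizing $[-r,r]$ into $\asymp r\,t^{-\gamma/(2\alpha+1)}\varepsilon^{-2/(2\alpha+1)}$ points at $d_Z$-spacing $\varepsilon$, the SLND bound forces the conditional probability that each successive increment is at most a fixed multiple of $\varepsilon$ to stay bounded away from $1$; multiplying these conditional probabilities over the discretization yields the upper bound in \eqref{Eq:sb121}. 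The hypothesis $\varepsilon<r^{\alpha+1/2}$ enters here to guarantee that $\varepsilon$ lies below the oscillation scale of $Z$ on $[t-r,t+r]$, so that the discretization contains enough points. Note that, unlike in Proposition~\ref{lem small}, positivity of $H$ is not needed, since the argument only concerns increments at points bounded away from the origin.

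Part~(b) follows from part~(a) with no new work. By Lemma~\ref{lem expect}, $Z'$ is a generalized Riemann--Liouville FBM with indices $\alpha-1\in(-1/2,1/2)$ and $\gamma$; its increment variance is given by \eqref{eq expect3} and its one-sided SLND by Proposition~\ref{lem SLND}(b), which are exactly the inputs of part~(a) with $\alpha$ replaced by $\alpha-1$. Applying part~(a) to $Z'$ therefore gives \eqref{Eq:sb221} with exponent $2/(2\alpha-1)$ and factor $t^{-\gamma/(2\alpha-1)}$, and in particular no condition $\alpha-\gamma/2-1/2>0$ is required. I expect the only delicate point to be the localized moment estimate with constants uniform in $t$: collapsing both cases of Lemma~\ref{Lem:VarioZ} onto the single order $t^{-\gamma}|s-s'|^{2\alpha+1}$ is precisely what makes the explicit $t$-dependence of the final bounds come out correctly.
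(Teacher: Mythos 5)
Your proposal is correct and follows essentially the same route as the paper: the localized two-sided bound $\mathbb{E}[(Z(t+s)-Z(t+s'))^2]\asymp t^{-\gamma}|s-s'|^{2\alpha+1}$ on $[t-r,t+r]$ from Lemma~\ref{Lem:VarioZ}, the covering bound $\psi(\varepsilon)\asymp r\,t^{-\gamma/(2\alpha+1)}\varepsilon^{-2/(2\alpha+1)}$ fed into Lemma~\ref{Lem:Ta93} for the lower bound, the one-sided SLND of Proposition~\ref{lem SLND} plus Theorem~2.1 of Monrad--Rootz\'en for the upper bound, and the reduction of part~(b) to part~(a) via the observation that $Z'$ is a generalized Riemann--Liouville FBM with indices $\alpha-1$ and $\gamma$. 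Your added explicit remarks (passing from the two-point to the anchored supremum, and taking $b=3t/2$ in the SLND to get the correct $t$-dependence) are details the paper leaves implicit, and they are handled correctly.
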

 
 \begin{proof}  (a). Assume  $\alpha\in(-1/2,\,1/2), 0<r<t/2$. Let $I(t,r) = [t-r, t+r] $. It 
 follows from Lemma \ref{Lem:VarioZ}  that
 \[
 \mathbb E\big[(Z(s) - Z(s'))^2\big] \asymp  t^{-\gamma}\,|s-s'|^{2 \alpha + 1} \quad \hbox{ for all }\ s, \,s' \in I(t,r).
 \]
 Hence, there exists a constant $c_{5,11}\in (0, \infty)$ satisfying that   for all  $0<\varepsilon< r^{\alpha+1/2}$,   
$$
N_\varepsilon(I(t,r)) \le c_{5,11}\, t^{-\frac{\gamma}{2\alpha+1}} \, \varepsilon^{-\frac 2 {2\alpha + 1}}\, r =: \psi(\varepsilon).
$$ 
Then the function $\psi(\varepsilon)$ satisfies \eqref{Eq:Covering} with $K_1=K_2 = 2^{ \frac 2 {2\alpha + 1}}>1$.
Hence the lower bound in \eqref{Eq:sb121} follows from \eqref{Eq:SB1} in Lemma \ref{Lem:Ta93}.

Next,  
Lemma \ref{Lem:VarioZ}  and Proposition \ref{lem SLND} 
show that the conditions of Theorem 2.1 of  Monrad and Rootz\'en \cite{MR1995} are satisfied. Hence the 
upper bound in \eqref{Eq:sb121} follows from Theorem 2.1 of \cite{MR1995}.

(b). As noted earlier, when $\alpha\in(1/2,\, 3/2)$ the Gaussian process $\{Z'(t)\}_{t \ge 0}$  is a generalized 
Riemann-Liouville FBM. Hence \eqref{Eq:sb221} follows from \eqref{Eq:sb121}. This finishes the proof.
 \end{proof}

\section{Chung's law of the iterated logarithm for $Z$}
   
As applications of small ball probability estimates, Monrad and
Rootz\'en \cite{MR1995}, Xiao \cite{Xiao1997}, and Li and Shao \cite{LiShao01} established
Chung's LILs for fractional Brownian motions and other strongly locally nondeterministic Gaussian
processes with stationary increments. Notice that  the generalized Riemann-Liouville FBM $Z$ does not 
have stationary increments.  Here, we will use the small ball probability estimates and the Lamperti 
transformation in the last two sections to establish Chung's LIL for $Z$ at points away from the origin when 
$\alpha\in(-1/2,\,1/2)$. The case of $\alpha = 1/2$ is open for $Z$ as well as  GFBM $X$.

\begin{proposition}\label{lem:ZLIL}
Assume $\alpha\in(-1/2,\,1/2)$.  There exists a constant $c_{6,1}\in(0,\infty)$ such that for every 
$t>0$,  
\begin{align}\label{eq ZLIL1}
\liminf_{r\rightarrow0+}\sup_{ |s|\le r} \frac{ |Z(t+s)-Z(t)|} {r^{\alpha+1/2}/(\ln \ln 1/r)^{\alpha+1/2}}
   =c_{6,1} t^{-\gamma/2}, \ \  \ \ \ \text{a.s.}
\end{align}  
\end{proposition}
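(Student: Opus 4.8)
The plan is to reduce to the base point $t=1$ by self-similarity, to show that the corresponding $\liminf$ is an almost sure constant through a germ-field zero-one law, and then to pin that constant into $(0,\infty)$ using the small ball estimates of Proposition~\ref{lem small2}(a) together with the one-sided SLND of Proposition~\ref{lem SLND}. Write $M_t(r)=\sup_{|s|\le r}|Z(t+s)-Z(t)|$ and $\phi(r)=r^{\alpha+1/2}(\ln\ln 1/r)^{-(\alpha+1/2)}$, so the assertion reads $\liminf_{r\to0+}M_t(r)/\phi(r)=c_{6,1}\,t^{-\gamma/2}$ a.s. Since $Z$ is $H$-self-similar, $\{Z(t\tau)\}_{\tau}\overset{d}{=}\{t^{H}Z(\tau)\}_{\tau}$, hence $\{M_t(r)\}_{r}\overset{d}{=}\{t^{H}M_1(r/t)\}_{r}$. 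Substituting $\rho=r/t$ and using $\phi(t\rho)\sim t^{\alpha+1/2}\phi(\rho)$ as $\rho\to0+$ (the factor $t$ inside $\ln\ln$ is asymptotically negligible) together with $H-\alpha-\tfrac12=-\tfrac\gamma2$, I obtain
\[
\liminf_{r\to0+}\frac{M_t(r)}{\phi(r)}\overset{d}{=}t^{-\gamma/2}\,\liminf_{\rho\to0+}\frac{M_1(\rho)}{\phi(\rho)}.
\]
So it suffices to prove that $\xi:=\liminf_{\rho\to0+}M_1(\rho)/\phi(\rho)$ is a deterministic constant $c_{6,1}\in(0,\infty)$; the factor $t^{-\gamma/2}$ and the $t$-independence of $c_{6,1}$ then come for free.

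To see that $\xi$ is a.s. constant I would separate the locally determined part of the increments. For $s>0$, splitting the integral in \eqref{eq Z} at $u=1$ writes $Z(1+s)-Z(1)$ as $\int_0^1[(1+s-u)^{\alpha}-(1-u)^{\alpha}]u^{-\gamma/2}B(du)$, which is Lipschitz in $s$ and therefore contributes only $O(r)=o(\phi(r))$ to $M_1(r)$ since $\alpha<\tfrac12$, plus $\int_1^{1+s}(1+s-u)^{\alpha}u^{-\gamma/2}B(du)$, which is measurable with respect to $B$ restricted to $[1,1+s]$; a symmetric splitting applies for $s<0$. Thus $\xi$ coincides a.s. with a functional of the germ field $\bigcap_{\ep>0}\sigma\big(B|_{[1-\ep,1+\ep]}\big)$, which is trivial by the Blumenthal zero-one law for the driving white noise. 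Hence $\xi=c_{6,1}$ a.s. for some constant $c_{6,1}\in[0,\infty]$, and it remains to bound it.

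For the lower bound, take $r_n=\theta^n$, $\theta\in(0,1)$, and $\varepsilon_n=\lambda\phi(r_n)$. The crucial observation is that the exponent $r\,\varepsilon^{-2/(2\alpha+1)}$ appearing (at $t=1$) in Proposition~\ref{lem small2}(a) is scale invariant under $\varepsilon=\lambda\phi(r)$: since $\phi(r)^{-2/(2\alpha+1)}=r^{-1}\ln\ln(1/r)$, one has $r_n\varepsilon_n^{-2/(2\alpha+1)}=\lambda^{-2/(2\alpha+1)}\ln\ln(1/r_n)\sim\lambda^{-2/(2\alpha+1)}\ln n$. The upper small ball bound then gives $\mathbb P\{M_1(r_n)\le\varepsilon_n\}\le n^{-c_{5,8}\lambda^{-2/(2\alpha+1)}(1+o(1))}$, which is summable once $\lambda$ is small enough that $c_{5,8}\lambda^{-2/(2\alpha+1)}>1$. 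By the first Borel--Cantelli lemma, $M_1(r_n)>\lambda\phi(r_n)$ eventually; interpolating through the monotonicity of $M_1$ and the fact that $\phi(r_{n+1})/\phi(r_n)\to\theta^{\alpha+1/2}$, and then letting $\theta\to1$, yields $\xi\ge\lambda$, so $c_{6,1}>0$.

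The upper bound is the main obstacle. With $\varepsilon_n=\Lambda\phi(r_n)$ and $\Lambda$ large enough that $c_{5,7}\Lambda^{-2/(2\alpha+1)}<1$, the same computation and the lower small ball bound give $\sum_n\mathbb P(E_n)=\infty$ for $E_n=\{M_1(r_n)\le\Lambda\phi(r_n)\}$. Because $r_n=\theta^n$ must be only geometric (any faster decay would destroy this divergence), the intervals $[1-r_n,1+r_n]$ are nested and the $E_n$ are strongly dependent, so the ordinary second Borel--Cantelli lemma is unavailable; this is the technical heart. Here I would use the one-sided SLND of Proposition~\ref{lem SLND} (conditioning on the past up to time $1$ isolates a fresh, conditionally nondegenerate increment) to establish a conditional version of the lower bound in Proposition~\ref{lem small2}(a), showing that the conditional probabilities of the successive (one-sided) small ball events stay bounded below by a constant multiple of their unconditional values; a conditional (L\'evy) Borel--Cantelli lemma then forces $E_n$ infinitely often, whence $\xi\le\Lambda$ and $c_{6,1}<\infty$. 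This is the same SLND-plus-blocking mechanism underlying Theorem~2.1 of Monrad and Rootz\'en \cite{MR1995} used for the small ball upper bound, and it is convenient to carry it out first for the one-sided supremum $\sup_{0\le s\le r}$, the two-sided statement following as in the Remark after Theorem~\ref{thm LIL0}. Combining the steps gives $\xi=c_{6,1}\in(0,\infty)$ a.s., and the self-similar reduction upgrades this to the claimed identity for every $t>0$.
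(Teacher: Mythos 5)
Your reduction to $t=1$ by self-similarity and the zero-one law idea are sound in outline (the paper achieves the same constancy via Lamperti's transformation, a spectral band decomposition of the stationary process $U$, and Kolmogorov's zero-one law), but your justification of germ-field measurability contains an error: the term $\int_0^1\big[(1+s-u)^{\alpha}-(1-u)^{\alpha}\big]u^{-\gamma/2}B(du)$ is \emph{not} Lipschitz in $s$. Its increments have variance $\asymp |s-s'|^{2\alpha+1}$ near $s=0$ (this is exactly the term $I_1$ in Lemma \ref{Lem:VarioZ}, which is of the same order as $I_2$), since the kernel's $s$-derivative blows up as $u\uparrow 1$. The repair is to split at $u=1-\epsilon$ for each $\epsilon>0$: the contribution of $B|_{[0,1-\epsilon]}$ is smooth in $s$ near $0$, so the $\liminf$ is measurable with respect to the white-noise germ field at $1$, which is trivial. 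Your lower bound matches the paper's (first Borel--Cantelli with the upper small ball estimate of Proposition \ref{lem small2}).

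The genuine gap is the upper bound, which is the heart of the proposition. One-sided SLND (Proposition \ref{lem SLND}) bounds conditional variances from \emph{below}, which is precisely the input for \emph{upper} bounds on small ball probabilities (Monrad--Rootz\'en); it cannot produce the lower bounds on \emph{conditional} small ball probabilities that your conditional Borel--Cantelli scheme requires. Concretely, to bound $\mathbb P(E_n\mid \mathcal F_{n-1})$ from below you must show the conditional mean shift induced by the larger-scale data is small, and around an interior point $t$ that data sits on \emph{both} sides of $t$ --- conditioning on the past up to time $t$ says nothing about the future side, and the past-measurable part of the increment (the non-Lipschitz term above) is of full order $r^{\alpha+1/2}$, so the ``fresh increment'' does not dominate. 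Anderson's inequality also points the wrong way for this purpose. The paper circumvents all of this by creating genuine independence in the frequency domain: writing $Z(t)=t^H\int_{\mathbb R}e^{i\lambda\ln t}W(d\lambda)$, it decomposes $Z=Z_n+\widetilde Z_n$ over bands $(d_{n-1},d_n]$ with $d_n=n^{n+1/2-\alpha}$ and radii $t_n=n^{-n}$, applies Anderson's inequality to transfer the unconditional small ball lower bound to the independent pieces $Z_n$, uses the second Borel--Cantelli lemma for these independent events, and kills $\widetilde Z_n$ via Fernique's lemma together with the spectral estimates of Lemma \ref{lem spectral}. Note also that your claim that $r_n$ ``must be only geometric'' is false and obscures the available workaround: since the exponent in Proposition \ref{lem small2} scales like $\ln\ln(1/r_n)$, divergence of $\sum_n\mathbb P(E_n)$ survives even for $t_n=n^{-n}$, and this super-geometric decay is exactly what permits the scale separation in frequency. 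Finally, your closing remark is backwards: an upper bound for the $\liminf$ of the one-sided supremum does not imply one for the two-sided supremum (the two-sided sup is larger); the Remark after Theorem \ref{thm LIL0} derives one-sided statements from the two-sided proofs, not conversely.
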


For proving Proposition \ref{lem:ZLIL}, we will make use of the following zero-one law, which implies 
the existence of the limit in the left hand side of \eqref{eq ZLIL1}. Notice that the constant $c_{6,1}'$ 
in \eqref{Eq:Chung01law} can be $0$ or $\infty$.

\begin{lemma}\label{lem:01lawZ} 
Assume $\alpha\in(-1/2,\,1/2)$. There exists a constant $c_{6,1}'\in[0,\infty]$ such that for every $t > 0$,
\begin{equation}\label{Eq:Chung01law} 
 \liminf_{r\rightarrow0+}\sup_{|s|\le r}  \frac{ |Z(t+s)-Z(t)|} {r^{\alpha+1/2}/(\ln \ln 1/r)^{\alpha+1/2}}
 =c_{6,1}'t^{-\gamma/2}, \ \  \ \ \ \text{a.s.}
\end{equation}
\end{lemma}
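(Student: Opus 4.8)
The plan is to establish \eqref{Eq:Chung01law} in two independent pieces: first, that for each fixed $t>0$ the $\liminf$ is almost surely equal to some constant $c(t)\in[0,\infty]$ (a zero-one law), and second, that the dependence of $c(t)$ on $t$ is exactly $c(t)=c_{6,1}'\,t^{-\gamma/2}$ for a single constant $c_{6,1}'=c(1)$ (a scaling identity). Throughout, write
$$ L(t):=\liminf_{r\to 0+}\ \frac{\sup_{|s|\le r}|Z(t+s)-Z(t)|}{r^{\alpha+1/2}/(\ln\ln 1/r)^{\alpha+1/2}}. $$
The key initial observation is that $L(t)$ is measurable with respect to the germ $\sigma$-field $\mathcal G_t:=\bigcap_{\epsilon>0}\sigma\big(Z(t+s)-Z(t):|s|\le\epsilon\big)$: the value of the $\liminf$ as $r\to 0+$ is unaffected by discarding any initial segment $r\ge\epsilon$, so it depends only on $\{Z(t+s)-Z(t):|s|\le\epsilon\}$ for every $\epsilon>0$.

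For the zero-one law I would show that $\mathcal G_t$ is $\mathbb P$-trivial, and this is the main obstacle, since germ triviality for Gaussian processes is not automatic (in general $\bigcap_\epsilon\sigma(G_\epsilon)\neq\sigma(\bigcap_\epsilon G_\epsilon)$ for a decreasing family of Gaussian subspaces $G_\epsilon$). The extra structure that rescues it is precisely the one-sided strong local nondeterminism of Proposition \ref{lem SLND}. Along a geometric sequence of radii $r_n=\theta^n$ with $\theta\in(0,1)$, one controls the oscillations $\sup_{r_{n+1}\le|s|\le r_n}|Z(t+s)-Z(t)|$ and uses \eqref{eq one-sided} to see that conditioning on the coarse increments $\{Z(t+s)-Z(t):|s|\ge r_n\}$ leaves a residual centered Gaussian field whose conditional variance at scale $r_{n+1}$ is bounded below by $c\,t^{-\gamma}\,r_{n+1}^{2\alpha+1}$. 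Thus the finest-scale behaviour that determines $L(t)$ is asymptotically independent of any fixed initial block of scales, and a Kolmogorov-type zero-one law (equivalently, the reverse-martingale/L\'evy argument applied to $\mathbb E[\,\cdot\,|\sigma(Z(t+s)-Z(t):|s|\ge\epsilon)]$ as $\epsilon\downarrow 0$) forces every event in $\mathcal G_t$ to have probability $0$ or $1$. Hence $L(t)=c(t)$ a.s. for some $c(t)\in[0,\infty]$.

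It remains to identify the $t$-dependence, and here self-similarity delivers the scaling cleanly and without error terms. Since $\{Z(ct)\}_{t\ge 0}\overset{d}{=}\{c^{H}Z(t)\}_{t\ge 0}$, taking $c=t$ and writing $t+s=t(1+s/t)$ yields the process-level identity $\{Z(t+s)-Z(t)\}_{s}\overset{d}{=}\{t^{H}\big(Z(1+s/t)-Z(1)\big)\}_{s}$. Substituting $\rho=r/t$ in the definition of $L(t)$, using $H-(\alpha+\tfrac12)=-\gamma/2$ together with the fact that $\ln\ln(1/r)/\ln\ln(1/\rho)\to 1$ as $\rho\to 0$ (so the double-logarithm normalization is asymptotically invariant under the rescaling), I obtain $L(t)\overset{d}{=}t^{-\gamma/2}L(1)$. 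Combining this distributional identity with the almost-sure constancy established above forces $c(t)=t^{-\gamma/2}c(1)$; setting $c_{6,1}':=c(1)\in[0,\infty]$ gives \eqref{Eq:Chung01law}. As an alternative route to the scaling one may pass through the stationary Lamperti process $U$ of Section \ref{sec:Lamperti}: the local oscillation of $Z$ at $t$ equals, up to the factor $t^{H}$ and a negligible error that is of smaller order than the main term because $\alpha<1/2$, the local oscillation of $U$ at $\ln t$, and stationarity of $U$ then yields $t$-independence of the residual constant.
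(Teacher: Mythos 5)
Your second step --- deducing $c(t)=c_{6,1}'\,t^{-\gamma/2}$ from self-similarity --- is correct and in fact cleaner than the paper's transfer: the process-level identity $\{Z(t+s)-Z(t)\}_s\overset{d}{=}\{t^H(Z(1+s/t)-Z(1))\}_s$, the exponent bookkeeping $H-(\alpha+\tfrac12)=-\gamma/2$, and the asymptotic invariance of $\ln\ln(1/r)$ under $r\mapsto r/t$ are all fine, and once each $L(t)$ is an a.s.\ constant, equality in distribution of constants forces the scaling relation. The problem is the first step, which is where the entire difficulty of the lemma lives. Your zero-one law rests on the claim that the one-sided SLND of Proposition \ref{lem SLND} makes the fine scales ``asymptotically independent'' of any coarse block, so that a reverse-martingale argument trivializes the germ field $\mathcal G_t$. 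This does not follow. First, Proposition \ref{lem SLND} conditions only on the \emph{past} $\{Z(r):r\le s\}$; your argument conditions on the two-sided coarse field $\{Z(t+s)-Z(t):|s|\ge r_n\}$, which contains future values, and the cited proposition gives no lower bound on that conditional variance. Second, and more fundamentally, a \emph{lower} bound on innovation variance is not asymptotic independence: by Lemma \ref{Lem:VarioZ} the total variance of a fine increment is $\asymp t^{-\gamma}|s|^{2\alpha+1}$, the same order as the SLND lower bound, so the projection of the fine increment onto the coarse field can carry variance of comparable order --- the correlation between scales need not vanish at all. Consequently $\mathbb E[\mathbf 1_A\mid \mathcal F^{\rm coarse}_\varepsilon]\to \mathbb P(A)$ is exactly what would need proof and is not supplied; the reverse martingale only gives the useless convergence $\mathbb E[\mathbf 1_A\mid\mathcal F^{\rm coarse}_\varepsilon]\to\mathbf 1_A$. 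Germ-field triviality for Gaussian processes genuinely can fail, as you note, and nothing in your sketch rules that out here.

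What the paper does instead --- and what your proposal is missing --- is to manufacture honest independence rather than argue for approximate independence across time scales. It passes to the stationary Lamperti process $U(s)=e^{-sH}Z(e^s)$ with spectral representation $U(s)=\int_{\mathbb R}e^{i\lambda s}\,W(d\lambda)$ and splits $U=\sum_n U_n$ into \emph{frequency bands} $U_n(s)=\int_{n-1\le|\lambda|<n}e^{i\lambda s}\,W(d\lambda)$, which are exactly independent because $W$ is an independently scattered Gaussian measure. Each band-limited $U_n$ is a.s.\ continuously differentiable, hence Lipschitz on compacts, and since $\alpha+\tfrac12<1$ any finite sum $\sum_{n\le N}U_n$ contributes $0$ to the $\liminf$ functional; thus $\{L\le c\}$ is a genuine tail event for the independent sequence $(U_n)$ and Kolmogorov's zero-one law applies directly, with stationarity of $U$ removing the dependence on the base point. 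The final transfer back to $Z$ via $Z(t+s)-Z(t)=[(t+s)^H-t^H]U(\ln(t+s))+t^H[U(\ln t+\ln(1+s/t))-U(\ln t)]$ produces the factor $t^{-\gamma/2}$, which your self-similarity argument recovers independently. To repair your proof, replace the SLND heuristic by this (or some other) decomposition into independent components each of which is negligible for the functional; as written, the zero-one law is a genuine gap.
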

\begin{proof} We start with the stationary Gaussian process $U = \{U(s)\}_{s \in \mathbb R}$ in \eqref{eq U int}. 
As in the proof of Lemma 2.1 in \cite{WSX}, we write for every $n \ge 1$,
\[
U_n(s) = \int_{n-1 \le |\lambda| < n} e^{i\lambda s}\, W(d\lambda).
\]
Then the Gaussian processes $U_n = \{U_n(s)\}_{s \in \mathbb R},\,  n \ge 1 $,  are independent and 
\[
U(s) = \sum_{n=1}^\infty U_n(s), 
\]
where the series is a.s. uniformly convergent on every compact interval in $\mathbb R$. As in the proof of Proposition \ref{prop Y},
we can verify that every $U_n(s)$ is a.s. continuously differentiable (or as in \cite{WSX}, it is sufficient 
to show that $U_n(s)$ is almost Lipschitz on compact intervals). 
Therefore,  for every $x\in \mathbb R, N\in\mathbb N$,
$$
 \liminf_{r\rightarrow0+}\sup_{|s|\le r}  \frac{\big|\sum_{n=1}^N \left(U_n(x+s)-U_n(s) \right)\big|} {r^{\alpha+1/2}/(\ln \ln 1/r)^{\alpha+1/2}}=0.
$$
Hence for every $x \in\mathbb R $ and $c\ge 0$, the event 
$$E_c =\bigg\{ \liminf_{r\rightarrow0+}\sup_{|s|\le r}  \frac{ |U(x+s)-U(x)|} {r^{\alpha+1/2}/(\ln \ln 1/r)^{\alpha+1/2}} \le c\bigg\}$$  
is a tail event with respect to $U_n\, (n \ge 1)$. By Kolmogorov's zero-one law, we have $\mathbb P(E_c) = 0$ or $1$. Let $c_{6,1}' =
 \inf\{c \ge 0:  \mathbb P(E_c) = 1\}$, with the convention that $\inf \emptyset = \infty$. Then $c_{6,1}'\in[0,\infty]$ and 
 \begin{equation}\label{01U}
 \liminf_{r\rightarrow0+}\sup_{|s|\le r}  \frac{ |U(x+s)-U(x)|} {r^{\alpha+1/2}/(\ln \ln 1/r)^{\alpha+1/2}}  = c_{6,1}', \ \ \  \ \ \ \text{a.s.}
 \end{equation}
 Moreover,  $c_{6,1}'$ does not depend on $x$ because of the stationarity of $U$. 
 
 Next, we use \eqref{01U} and Lamperti's transformation to show (\ref{Eq:Chung01law}). For any 
 $t > 0$ fixed, $0 < r < t$  and $|s|\le r$, we write 
\begin{equation}\label{01U2}
\begin{split}
 &Z(t+s) - Z(t) \\
  = &\,\big[(t+s)^H - t^H\big] U(\ln (t+s)) + t^H \big[U(\ln (t+s) ) - U(\ln t )\big]\\
  =&\,  \big[(t+s)^H - t^H\big] U(\ln (t+s)) + t^H \big[U\big(\ln t + \ln ( 1 +  s/t ) \big) - U(\ln t )\big].
 \end{split}
 \end{equation}
Since the first term is  Lipschitz in $s \in [-r, r]$ and $\ln \big( 1 + \frac s t) \big) \sim s/t$ as $s \to 0$, 
it can be verified that (\ref{Eq:Chung01law})  follows from (\ref{01U2}) and \eqref{01U} with $x = \ln t$.  
\end{proof}

It follows from Lemma \ref{lem:01lawZ} that Proposition \ref{lem:ZLIL} will be established if we show 
$c_{6,1}' \in (0, \infty)$. This is where Propositions \ref{lem SLND},  \ref{lem small2}, Lemma \ref{lem spectral}  
and the following version of Fernique's lemma from  \cite[Lemma 1.1, p.138]{JM1978} are needed. 
   
 \begin{lemma}\label{lem: Fern} Let $\{\xi(t)\}_{t\ge0}$ be a separable, centered, real-valued Gaussian 
 process. Assume that 
 $$
\mathbb E\left[\left(\xi(t+h)-\xi(t)\right)^2\right]\le \varphi(h)^2, \ \ \ t>0, \ \ h>0,
 $$
 for some continuous nondecreasing function $\varphi$ with $\varphi(0)=0$.  For any positive integer $k>1$ and 
 any positive constants $t, x$ and $\theta(p), p\in \mathbb N$,  we have
 \begin{equation*}
  \mathbb P\left\{\sup_{0\le s\le t}|\xi(s)-\xi(0)|>x\varphi(t)+\sum_{p=1}^{\infty} \theta(p)\varphi\left(tk^{-2^p} \right)  \right\}
 \le   k^2 e^{-x^2/2}+\sum_{p=1}^{\infty} k^{2^{p+1}}e^{-\theta(p)^2/2}.
 \end{equation*}
 \end{lemma}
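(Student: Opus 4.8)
The plan is to prove this inequality by a Fernique-type chaining argument over nested grids on $[0,t]$ whose mesh sizes shrink doubly exponentially in $p$. First I would use separability to replace the supremum over $[0,t]$ by a supremum over a countable dense set. Concretely, for each integer $p\ge 1$ set
\[
D_p=\left\{ jtk^{-2^p}: 0\le j\le k^{2^p}\right\},
\]
so that $D_p\subset D_{p+1}$, the grid $D_p$ has $k^{2^p}+1$ points, and its mesh equals $tk^{-2^p}$. Since $\varphi$ is continuous with $\varphi(0)=0$, the hypothesis makes $\xi$ continuous in $L^2$ (hence in probability), so a separable version satisfies $\sup_{0\le s\le t}|\xi(s)-\xi(0)|=\sup_{s\in D_\infty}|\xi(s)-\xi(0)|$ almost surely, where $D_\infty=\bigcup_{p\ge 1}D_p$.

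Next I would set up the chain. For $s\in D_\infty$ and each $p$, let $\pi_p(s)\in D_p$ be a nearest grid point to $s$; since the grids are nested, $\pi_p(s)=s$ for all large $p$, while $|\pi_1(s)|\le t$ and $|\pi_{p+1}(s)-\pi_p(s)|\le tk^{-2^p}$. Telescoping then gives, for each fixed $s$, the finite decomposition
\[
\xi(s)-\xi(0)=\big[\xi(\pi_1(s))-\xi(0)\big]+\sum_{p\ge 1}\big[\xi(\pi_{p+1}(s))-\xi(\pi_p(s))\big].
\]
The key observation is the following sufficient condition: if $|\xi(u)-\xi(0)|\le x\varphi(t)$ for every $u\in D_1$, and $|\xi(v)-\xi(\pi_p(v))|\le\theta(p)\varphi(tk^{-2^p})$ for every $p\ge1$ and every $v\in D_{p+1}$, then the telescoping bound and the monotonicity of $\varphi$ force $\sup_{s\in D_\infty}|\xi(s)-\xi(0)|\le x\varphi(t)+\sum_{p\ge1}\theta(p)\varphi(tk^{-2^p})$. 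Passing to complements, the event on the left of the claimed inequality is contained in the union of the ``bad'' events for the individual links.

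It then remains to union-bound these link events. For a centered Gaussian increment the hypothesis gives $\operatorname{var}(\xi(u)-\xi(v))\le\varphi(|u-v|)^2$, so the standard Gaussian tail estimate yields $\mathbb{P}(|\xi(u)-\xi(v)|>a)\le 2\exp(-a^2/(2\varphi(|u-v|)^2))$. Applying this to the base links (at most $k^2$ nontrivial points $u\in D_1$, each with $\varphi(|u|)\le\varphi(t)$) produces the term governed by $k^2e^{-x^2/2}$, and applying it to the level-$p$ links (at most $k^{2^{p+1}}$ points $v\in D_{p+1}$, each with $\varphi(|v-\pi_p(v)|)\le\varphi(tk^{-2^p})$) produces the terms governed by $k^{2^{p+1}}e^{-\theta(p)^2/2}$; summing over $p$ gives the stated bound.

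The hard part will be the bookkeeping in this last step. One must verify that the number of distinct links at level $p$ is controlled by $|D_{p+1}|=k^{2^{p+1}}$ while the associated distance scale is the coarser $\varphi(tk^{-2^p})$, so that count and scale are deliberately offset by one level; one must also confirm that $\pi_p(s)=\pi_p(\pi_{p+1}(s))$ so the telescoping links are genuinely of the form $\xi(v)-\xi(\pi_p(v))$ with $v\in D_{p+1}$, and that the harmless constants (the two-sided tail factor and the ``$+1$'' in $|D_p|$) are absorbed exactly as in \cite{JM1978}. Once the counts are matched, the convergence of the final series and the reduction from $D_\infty$ back to $[0,t]$ follow immediately from the $L^2$-continuity of $\xi$.
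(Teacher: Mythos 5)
The paper contains no proof of this lemma to compare against: it is quoted directly from Jain and Marcus \cite{JM1978} (Lemma 1.1, p.~138). Your chaining argument is precisely the standard proof of that result, and its architecture is right: nested grids $D_p$ with $|D_p|=k^{2^p}+1$ and mesh $tk^{-2^p}$, a telescoping chain whose level-$p$ links number roughly $k^{2^{p+1}}$ but live at the coarser scale $\varphi(tk^{-2^p})$, a union bound with Gaussian tails, and separability to pass from $D_\infty$ back to $[0,t]$.

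However, two of the points you deferred as ``bookkeeping'' do not resolve the way you claim, and since the lemma asserts exact constants they must be handled correctly. First, the factor $2$ in your tail estimate is \emph{not} absorbable: at level $p$ you would need $2\bigl(k^{2^{p+1}}-k^{2^p}\bigr)\le k^{2^{p+1}}$, i.e.\ $k^{2^p}\le 2$, which fails already for $k=2$, $p=1$. The fix is to use the sharper standard inequality $\mathbb P\{|N|>a\}\le e^{-a^2/2}$ for a standard normal $N$ and all $a\ge 0$; with it, the $k^2$ nonzero points of $D_1$ give exactly $k^2e^{-x^2/2}$, and the $k^{2^{p+1}}-k^{2^p}\le k^{2^{p+1}}$ points of $D_{p+1}\setminus D_p$ give exactly $k^{2^{p+1}}e^{-\theta(p)^2/2}$ (note that only links with $v\notin D_p$ are nontrivial, which also disposes of the ``$+1$'' in $|D_p|$). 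Second, the consistency identity $\pi_p(s)=\pi_p(\pi_{p+1}(s))$ can genuinely fail for nearest-point projections: for even $k$ the midpoint of two consecutive $D_p$-points belongs to $D_{p+1}$, and tie-breaking can then send $s$ and $\pi_{p+1}(s)$ to different $D_p$-points. The standard remedy is the left-endpoint projection $\pi_p(s)=tk^{-2^p}\lfloor sk^{2^p}/t\rfloor$, for which consistency is automatic because every $D_p$-cell is a union of $D_{p+1}$-cells; the cost is the full-mesh bound $|s-\pi_p(s)|\le tk^{-2^p}$ instead of half-mesh, which is harmless since $\varphi$ is nondecreasing. (Equivalently, define the chain recursively by $s_p=\pi_p(s_{p+1})$, so that each $v\in D_{p+1}$ contributes at most one designated link; without one of these devices the level-$p$ links range over pairs $(v,w)\in D_{p+1}\times D_p$ with several admissible $w$ per $v$, inflating the count by a constant factor the exact statement cannot afford.) A last cosmetic point: the hypothesis is stated only for $t>0$, $h>0$, so the variance bound for the base links $\xi(u)-\xi(0)$ requires a limiting argument from positive left endpoints, which the $L^2$-continuity you already invoked supplies.
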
  
         
 We now give the proof of Proposition \ref{lem:ZLIL}.
\begin{proof}[Proof of Proposition \ref{lem:ZLIL}] Without loss of generality, 
we may assume  $t>1$.  We  first prove the lower bound.
For any integer $n \ge 1$, let $r_n = e^{-n}$. Let $0 < \delta <
c_{5,8}$ be a constant and consider the event
\[
A_n = \bigg\{ \sup_{|s|\le r_n} \big|Z(t+s)-Z(t)\big| \le
 \delta^{\alpha+1/2}\, t^{-\gamma/2}\, r_n^{\alpha+1/2}/(\ln \ln (1/r_n))^{\alpha+1/2}\bigg\}.
\]
Proposition \ref{lem small2} implies  that for  any $n\in \mathbb N$,
\begin{equation}\label{Eq:An1}
\mathbb P\{A_n\} \le \exp\bigg(- \frac{c_{{5,8}} } {\delta} \ln n \bigg)
= n^{- \frac{c_{{5,8}} }{\delta}}.
\end{equation}
Since $\sum_{n=1}^{\infty} \mathbb P\{A_n\} < \infty$, the Borel-Cantelli lemma
implies
\begin{equation}\label{Eq:LIL-lb1}
\liminf_{n\to \infty} \sup_{|s|\le r_n}\frac{ \big|Z(t+s)-Z(t)\big|} {r_n^{\alpha+1/2}/(\ln \ln (1/r_n))^{\alpha+1/2}}
\ge  \delta^{\alpha+1/2}\, t^{-\gamma/2}, \qquad \hbox{ a.s.}
\end{equation}
It follows from (\ref{Eq:LIL-lb1}) and a standard monotonicity
argument that
\begin{equation}\label{Eq:LIL-lb2}
\liminf_{r \to 0+}\sup_{|s| \le   r} \frac{ \big|Z(t+s)-Z(t)\big|}
{r^{\alpha+1/2}/(\ln \ln (1/r))^{\alpha+1/2}} \ge c_{{6,2}}\, t^{-\gamma/2}, \qquad \hbox{ a.s., }
\end{equation}
for some   constant $c_{6,2}\in (0,\infty)$ which is independent of $t>0$.

The upper bound is a little more difficult to prove due to the dependence structure of $Z$. 
In order to create independence, as in Tudor and Xiao \cite{TX2007}, we will make use 
of the following stochastic integral representation of $Z$:  
\begin{equation}\label{Eq:Rep1}
Z(t) = t^{H} \int_{\mathbb R} e^{i \lambda \ln t}\, 
W(d\lambda),\ \ \ \ \ t>0,
\end{equation}
where $H=\alpha-\gamma/2+1/2$. This follows from the spectral representation \eqref{eq U int} 
of $U $.

For every integer $n \ge 1$, we take
\begin{equation}\label{Eq:tn}
t_n = n^{-n} \ \ \ \hbox{ and }\ \ \ d_n = n^{n+1/2-\alpha}.
\end{equation}
  It is sufficient to prove that there exists a finite
constant $ c_{{6,3}}$ such that
\begin{equation}\label{Eq:UP}
\liminf_{n\to \infty}\sup_{|s|\le 
t_n} \frac{ \big|Z(t+s)-Z(t)\big|} {t_n^{{\alpha+1/2}}/(\ln \ln (1/t_n))^{\alpha+1/2}}
\le c_{{6,3}}t^{-\gamma/2} \qquad \hbox{ a.s.}
\end{equation}

Let us define two Gaussian processes $Z_n$ and $\widetilde{Z}_n$ by
\begin{equation}\label{Eq:Xn1}
Z_n(t) = t^{H} \int_{|\lambda| \in (d_{n-1}, d_n]} e^{i \lambda
\ln t}\, W(d\lambda)
\end{equation}
and
\begin{equation}\label{Eq:Xn2}
\widetilde{Z}_n(t) = t^{H} \int_{|\lambda| \notin (d_{n-1}, d_n]}
e^{i \lambda \ln t}\, W(d\lambda),
\end{equation}
respectively. Clearly $Z(t) = Z_n(t) + \widetilde {Z}_n(t)$ for all $t >0$. It is 
important to note that the Gaussian processes $Z_n\, (n = 1, 2, \ldots)$ are 
independent and, moreover, for every $n \ge 1,$ $Z_n$ and $\widetilde{Z}_n$ 
are independent as well.

Denote $h(r) = r^{\alpha+1/2}\, \big(\ln \ln (1/r)\big)^{-(\alpha+1/2)}$. We make
the following two claims:
\begin{itemize}
\item[(i).]\ There is a constant $\delta > 0$ such that
\begin{equation}\label{Eq:UP1}
\sum_{n=1}^\infty \mathbb P\bigg\{\sup_{|s|\le t_n} \big|Z_n(t+s)-Z_n(t)\big|
\le \delta^{\alpha+1/2}\,t^{-\gamma/2}\, h(t_n)\bigg\} = \infty.
\end{equation}
\item[(ii).]\ For every $\varepsilon> 0$,
\begin{equation}\label{Eq:UP2}
\sum_{n=1}^\infty \mathbb P\bigg\{\sup_{|s| \le   t_n}
\big|\widetilde{Z}_n(t+s)-\widetilde{Z}_n(t)\big| > \varepsilon\,t^{-\gamma/2}\, h(t_n)\bigg\} < \infty.
\end{equation}
\end{itemize}
Since the events in (\ref{Eq:UP1}) are independent, we see that (\ref{Eq:UP}) follows 
from (\ref{Eq:UP1}), (\ref{Eq:UP2}) and a standard Borel-Cantelli argument.

It remains to verify the claims (i) and (ii) above. By  Proposition \ref{lem small2}
and Anderson's inequality  \cite{And1955}, we have
\begin{equation}\label{Eq:UP3}
\begin{split}
&\mathbb P\bigg\{\sup_{|s| \le t_n} \big|Z_n(t+s)-Z_n(s)\big| \le \delta^{\alpha+1/2}\, t^{-\gamma/2}\,
h(t_n)\bigg\}\\
 &\ge \,\mathbb P\bigg\{\sup_{|s| \le t_n} \big|Z(t+s)-Z(s)\big| \le \delta^{\alpha+1/2}\, t^{-\gamma/2}\, h(t_n)\bigg\}\\
 &\ge \, \exp\Big(- \frac{c_{{5,7}}} {\delta} \ln (n \ln n) \Big)\\
&=\, \big(n \ln n\big)^{-\frac{ c_{5,7}}{\delta}}.
\end{split}
\end{equation}
Hence (i) holds for $\delta \ge  c_{5,7}$.

In order to  prove (ii),  we divide it into two terms:  For any $|s|<t_n$,
\begin{equation}\label{Eq:J1}
\begin{split}
\mathbb E\Big(\big(\widetilde{Z}_n(t+s) - \widetilde{Z}_n(t)\big)^2\Big) &=
\int_{|\lambda|\le d_{n-1}} \big| (t+s)^{H} \, e^{i \lambda \ln (t+s)} -
t^{H} \, e^{i \lambda \ln t}\big|^2 \,f_U(\lambda)\, d\lambda\\
&\quad + \int_{|\lambda| > d_{n}} \big| (t+s)^{H} \, e^{i \lambda
\ln (t+s)} - t^{H} \, e^{i \lambda \ln t}\big|^2 \,f_U(\lambda)\,
d\lambda\\
&=: \mathcal J_1 + \mathcal J_2.
\end{split}
\end{equation}
The second term is easy to estimate:  For any $|s|\le t_n$,  there exists $c_{6,4}\in (0,\infty)$ satisfying that 
\begin{equation}\label{Eq:J2}
\mathcal {J}_2 \le 2\,\left(t^{2H}+ (t+s)^{2H}\right)\, \int _{|\lambda| >
d_{n}}\,f_U(\lambda)\, d\lambda\le c_{{6,4}}\,   \,
n^{-(2\alpha+1)(n+1/2-\alpha)},
\end{equation}
where the last inequality follows from \eqref{Eq: sp2}.

For the first term $\mathcal J_1$, we use the following  elementary inequalities:
\begin{align*}
\left|(x+y)^H-x^H\right|\le |H| \max\left\{x^{H-1}, (x+y)^{H-1}\right\}|y|, \ \ \ \text{ for all }  0\le |y|<x,
\end{align*}
$1 - \cos x
\le x^2$ for all $x\in \mathbb R$
 and $\ln (1+x)\le x$ for all $x\ge0$ to derive that for any $|s|\le t_n$,
\begin{equation*} 
\begin{split}
\mathcal J_1 =&\, \int_{|\lambda|\le d_{n-1}} \left[\left((t+s)^{H} -
t^{H}\right)^2  + 2 (t+s)^{H}\,t^{H} \Big( 1 - \cos\Big( \lambda \ln 
\frac{t+s}t\Big)\Big)\right] \,f_U(\lambda)\, d\lambda\\
  \le &\, \frac{1}{H^2}\max\left\{ t^{2(H-1)},  (t+s)^{2(H-1)}  \right\}s^{2}  \int_{\mathbb R}
\,f_U(\lambda)\, d\lambda \\
& \quad \ + 2(t+s)^{H}\, t^H\, \ln ^2\left(1+\frac{s}t\right)\,
\int_{|\lambda|\le d_{n-1}}  \lambda^2
\,f_U(\lambda)\, d\lambda\\ 
  \le &\, c_{6,5}  \left( \int_{\mathbb R}
\,f_U(\lambda)\, d\lambda+ \int_{|\lambda|\le d_{n-1}}  \lambda^2
\,f_U(\lambda)\, d\lambda \right)n^{-2n}.
\end{split}
\end{equation*}
Here $c_{6,5}$ is in $(0,\infty)$. 
By  \eqref{Eq: sp1} and  \eqref{Eq: sp2}, we know 
$$
\int_{\mathbb R} \,f_U(\lambda)\, d\lambda+ \sup_{n\ge1}\int_{|\lambda|\le d_{n-1}}  \lambda^2
\,f_U(\lambda)\, d\lambda <\infty. 
$$
Notice that $2n\ge (2\alpha+1)(n+1/2-\alpha)$ for any $\alpha\in (-1/2+\gamma/2, 1/2)$. Thus, 
\begin{align}\label{Eq:J1b}
\mathcal J_1 \le c_{6,6} n^{-(2\alpha+1)(n+1/2-\alpha)},
\end{align}
for some constant $c_{6,6}\in (0,\infty)$. 
 
Put $\delta=(2\alpha+1)(1/2-\alpha)$. By Lemma \ref{lem expect},  \eqref{Eq:J1}, \eqref{Eq:J2}  
and \eqref{Eq:J1b},  there exists  a constant $K>0$ such that
for $0\le h\le t_n$,
\begin{align*}
\varphi_{n}(h)^2:=K\min\left\{h^{2\alpha+1},\,  n^{-(2\alpha+1)n-\delta} \right\}\ge 
\mathbb E\left[\big(\widetilde Z_n(t+h) -\widetilde Z_n(t) \big)^2  \right].
\end{align*} 
 
Put $x_n=(8\ln n)^{1/2}$. Given $\varepsilon>0$,   define 
\begin{align*}
\theta_n(p)=\varepsilon (p+1)^{-2} h(t_n)/\varphi_n\left(t_n n^{-2^p}\right)\ \ \ \ \text{for  all } p\ge1.
\end{align*} 
For large enough $n$, we have
$$
\theta_n(p)>4(\ln n)^{1/2} 2^{p/2}\ \ \ \text{for all } p\ge1
$$
and 
$$
x_n \varphi_n(t_n)+\sum_{p=1}^{\infty} \theta_n(p) \varphi_n\left(t_n n^{-2^p} \right)<\varepsilon h(t_n).
$$
Since 
$$
\sum_{n=1}^{\infty}n^2e^{-x_n^2/2}+\sum_{n=1}^{\infty}\sum_{p=1}^{\infty} n^{2^{p+1}}e^{-8(\ln n)2^p}<\infty,
$$
by applying  Lemma \ref{lem: Fern} with $\xi(s)=\widetilde Z_n(t\pm s)-\widetilde Z_n(t)$ for $0\le s\le t_n$, we 
obtain that 
$$
\sum_{n=1}^{\infty}\mathbb P\bigg\{\sup_{|s|\le t_n}\big|\widetilde Z_n(t+s)-\widetilde Z_n(t)\big|
>\varepsilon\, t^{-\gamma/2}\, h(t_n)  \bigg\}<\infty.
$$  
This proves (\ref{Eq:UP2}) and hence Proposition \ref{lem:ZLIL}.
\end{proof}

\begin{remark} {\rm In light of Proposition \ref{lem:ZLIL}, it is natural to study Chung's LIL of $Z$ at the origin.
While doing so, we found that there is an error in the proof of Theorem 3.1 in Tudor and Xiao \cite{TX2007}, which 
gives Chung's  LIL for bifractional Brownian motion at the origin. More precisely, the inequality (3.30) in \cite{TX2007} 
does not hold. Hence, Theorem 3.1 in \cite{TX2007} should be modified as a Chung's LIL at $t>0$, which is similar
to Theorem 1.4 and Proposition 6.1 in the present paper. 

It turns out that Chung's LIL at the origin for self-similar Gaussian processes that do not have stationary 
increments such as  GFBM $X$, the generalized Riemann-Liouville FBM $Z$, and bifractional Brownian motion 
is quite subtle. A different method than that in the proof of  Proposition \ref{lem:ZLIL} or that in \cite{TX2007} is 
needed for proving the desired upper bound. Recently, this problem has been studied in the subsequent paper 
\cite{WX2021} by modifying the arguments of Talagrand \cite{Tal96}. } 
 \end{remark} 
   
Similarly, for $\alpha\in (1/2,\, 3/2)$, we have the following Chung's LIL for $Z'$.  

\begin{proposition}\label{lem:ZLIL2}
Assume $\alpha\in(1/2,\, 3/2)$. There exists a constant $c_{6,7} \in(0,\infty)$  such that for any $t>0$, 
\begin{align}\label{eq ZLIL2}
\liminf_{r\rightarrow0+}\sup_{0\le s\le r} \frac{ |Z'(t+s)-Z'(t)|} {r^{\alpha-1/2}/(\ln \ln 1/r)^{\alpha-1/2}}
=c_{6,7}t^{-\gamma/2} \  \ \ \ \text{a.s.}
\end{align}  
\end{proposition}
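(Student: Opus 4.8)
The plan is to obtain Proposition \ref{lem:ZLIL2} as a direct consequence of Proposition \ref{lem:ZLIL}, using the observation---already made in Lemma \ref{lem expect} and in the proof of Proposition \ref{lem small}---that when $\alpha \in (1/2, 3/2)$ the mean-square derivative $\{Z'(t)\}_{t \ge 0}$ defined in \eqref{eq Z 1} is itself a generalized Riemann-Liouville FBM, but with the shifted index $\tilde\alpha := \alpha - 1$ in place of $\alpha$ and with the same $\gamma$. Since $\alpha \in (1/2, 3/2)$ is equivalent to $\tilde\alpha \in (-1/2, 1/2)$, the process $Z'$ lies precisely in the parameter range covered by Proposition \ref{lem:ZLIL}.

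Concretely, I would first check that every ingredient feeding into the proof of Proposition \ref{lem:ZLIL} applies to $Z'$ after replacing $\alpha$ by $\tilde\alpha = \alpha - 1$: the two-sided moment bounds \eqref{eq expect3}, the one-sided SLND in Proposition \ref{lem SLND}(b), the small ball estimate in Proposition \ref{lem small2}(b), and the spectral density bounds of Lemma \ref{lem spectral} (with $\eta = 0$, since $\tilde\alpha < 1/2$). All of these were established at the level of generality of the generalized Riemann-Liouville FBM, so no fresh computation is required. Applying Proposition \ref{lem:ZLIL} to $Z'$ with index $\tilde\alpha$ then produces a constant $c_{6,7} \in (0, \infty)$ such that for every $t > 0$,
\[
\liminf_{r\rightarrow0+}\sup_{|s|\le r} \frac{ |Z'(t+s)-Z'(t)|} {r^{\tilde\alpha+1/2}/(\ln \ln 1/r)^{\tilde\alpha+1/2}} = c_{6,7}\, t^{-\gamma/2}, \qquad \text{a.s.},
\]
and since $\tilde\alpha + 1/2 = \alpha - 1/2$, the normalization is exactly the one appearing in \eqref{eq ZLIL2}.

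The only point requiring a separate remark---and the closest thing to an obstacle---is the passage from the two-sided supremum $\sup_{|s|\le r}$ in Proposition \ref{lem:ZLIL} to the one-sided supremum $\sup_{0\le s\le r}$ in the statement of \eqref{eq ZLIL2}. Because $[0,r]\subset[-r,r]$, the one-sided normalized quantity never exceeds the two-sided one, so the one-sided $\liminf$ is automatically bounded above by $c_{6,7}\, t^{-\gamma/2}$. For the matching lower bound I would run the Borel-Cantelli argument of Proposition \ref{lem:ZLIL} with the one-sided version of the small ball upper bound in Proposition \ref{lem small2}, which holds by the same proof (as recorded in the remark following Theorem \ref{thm LIL0}). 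I do not expect any genuine difficulty beyond this bookkeeping: the entire content of the statement is the index shift $\alpha \mapsto \alpha-1$ together with the self-similar reduction, and the delicate part of Chung's LIL---the upper bound for the $\liminf$, created via the independent blocks $Z_n$ in the proof of Proposition \ref{lem:ZLIL}---is inherited wholesale from that proof.
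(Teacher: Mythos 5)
Your proposal is correct and takes essentially the same route as the paper, whose entire proof consists of the observation that by \eqref{eq Z 1} the process $Z'$ is (up to the constant factor $\alpha$) a generalized Riemann--Liouville FBM with indices $\alpha'=\alpha-1\in(-1/2,\,1/2)$ and $\gamma$, so that the argument of Proposition \ref{lem:ZLIL} carries over verbatim, with all details omitted. Your explicit handling of the passage from the two-sided supremum $\sup_{|s|\le r}$ to the one-sided $\sup_{0\le s\le r}$ (upper bound by monotonicity of the supremum, lower bound via the one-sided small ball estimate, cf.\ the remark following Theorem \ref{thm LIL0}) is sound bookkeeping that the paper leaves implicit.
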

\begin{proof}  Recall from \eqref{eq Z 1} that, for $\alpha\in(1/2,\, 3/2)$, the derivative process 
$\{Z'(t)\}_{ t \ge 0}$ is a generalized Riemann-Liouville FBM with indices $\alpha'=\alpha-1\in (-1/2,\ 1/2)$ 
and $\gamma\in (0,1)$. Hence the proof of \eqref{eq ZLIL2} follows the same line as in the proof of 
Proposition \ref{lem:ZLIL}. We omit the details. 
\end{proof}

 \section{Proofs of the main theorems} 

In this section we prove our main results for GFBM $X$ stated in Section 1.   
     
\subsection{Proof of Theorem  \ref{thm uniform}}
(a). Assume that $\alpha\in (-1/2+\gamma/2,1/2)$.  By Proposition \ref{prop Y}, 
we know that for any $0<a<b<\infty$,  $Y(t)$ is continuously differentiable on $[a, b]$. Then
 $$
     \lim_{\ep\rightarrow0} \sup_{a\le t\le b}\sup_{0\le h\le \ep}\frac{|Y(t+h)-Y(t)|}{h^{\alpha+\frac12}\sqrt{\ln h^{-1}}}=0.
 $$
By \eqref{eq decom}, to prove \eqref{eq unif X}, it is sufficient to prove that for any $0<a<b<\infty$,  
     \begin{align}\label{eq unif Z}
     \lim_{\ep\rightarrow0} \sup_{a\le t\le b}\sup_{0\le h\le \ep}\frac{|Z(t+h)-Z(t)|}{h^{\alpha+\frac12}\sqrt{\ln h^{-1}}}
     =c_{7,1},
 \end{align}
 where $c_{7,1}$ is a positive constant  satisfying 
  \begin{align}\label{eq kappa}
 c_{7, 2}:= \sqrt{ \frac{2 }{ (\alpha+1/2)^2 b^{\gamma}c_{3,6}}  }\le c_{7,1}\le 
  \sqrt{ \frac{2c_{3,6}}{ (\alpha+1/2)c_{3,5}} }=:c_{7,3}.
  \end{align}
 Here, $c_{3,5},\, c_{3,6}$   are constants given in \eqref{eq expect1}.
  
 For any $\ep>0$, let 
  $$
  J(\ep):=\sup_{a\le t\le b}\sup_{0\le h\le \ep}\frac{|Z(t+h)-Z(t)|}{h^{\alpha+\frac12}\sqrt{\ln h^{-1}}}.
  $$
 Since $\ep\mapsto J(\ep)$ is non-decreasing, the limit in the left-hand side of \eqref{eq unif Z} exists almost 
 surely. Moreover,  the zero-one law for the uniform modulus of continuity in \cite[Lemma 7.1.1]{MR} implies 
 that the limit in \eqref{eq unif Z} is a constant almost surely. Hence, it remains to prove that with probability one, 
  \begin{align}\label{eq upper}
  \lim_{\ep\rightarrow0+}J(\ep)\le c_{7,3} \ 
  \end{align}
  and  
 \begin{align}\label{eq lower}
  \lim_{\ep\rightarrow0+}J(\ep)\ge c_{7,2}.
  \end{align}
 
The proof of \eqref{eq upper} is standard. It follows from \eqref{eq expect1} and the metric-entropy bound (cf. 
Theorem 1.3.5 in \cite{AT07}), or one may prove this directly by applying Fernique's inequality in Lemma \ref{lem: Fern} 
and the Borel-Cantelli lemma. The lower bound in \eqref{eq lower} follows from
the one-sided SLND \eqref{eq one-sided} and Theorem 4.1 in \cite{MWX}. This proves (a).
  
(b).  When $\alpha=1/2$, we see that \eqref{eq expect12} holds for all $s, t \in [a, b]$. This implies that the canonical 
metric of $Z$ satisfies 
$$d_Z(s, t) \asymp |s-t| \sqrt{1 + \big| \ln  |s-t| \big|}  $$ 
on $[a, b]$. Hence,  for any $\varepsilon > 0$ small, 
the covering number $N_\varepsilon([a, b])$ of $[a, b]$ under $d_Z$ satisfies
\begin{equation}\label{Covering2}
N_\varepsilon([a, b]) \le c_{7,4} \varepsilon^{-1}  \big(\ln  \varepsilon^{-1} \big)^{1/2},
\end{equation}
where $c_{7,4}>0$ is a finite constant.  It follows from Theorem 1.3.5 in \cite{AT07} that almost surely for all $\delta > 0$
small enough, 
\begin{equation}\label{Dudley}
\begin{split}
\sup_{a\le t\le b}\sup_{d_Z(t, t+h) \le \delta} \big|Z(t+h)-Z(t) \big| &\le c \int_0^\delta \sqrt{\ln N_\varepsilon([a, b])}\, d\ep
\le c \delta \sqrt{\ln  {\delta}^{-1} }.
\end{split}
\end{equation}
Notice that, by \eqref{eq expect12}, $d_Z(t, t+h) \le \delta$ is compatible to $|h| \sqrt{\ln  |h|^{-1}} \le \delta$, 
up to a constant factor. Hence,  \eqref{Dudley} implies that 
\begin{equation}\label{Dudley2}
\begin{split}
\sup_{a\le t\le b}\sup_{0 \le h \le r} \big|Z(t+h)-Z(t) \big| \le  c\, r \ln  r^{-1}.
\end{split}
\end{equation}
Now it can be seen that  \eqref{eq unif X2} follows from \eqref{Dudley2}.

 (c).  The proof of \eqref{eq unif X'} is similar: the uniform modulus of continuity of $X'$ on $[a, b]$ is 
 the same as that of $Z'$ on $[a, b]$, and the latter can be derived from \eqref{eq expect3} in
 Lemma \ref{lem expect} and the one-sided SLND (\ref{eq one-sided2}) in Proposition \ref{lem SLND}. 
 We omit the details here. The proof is complete. \qed

 \subsection{Proofs of Theorems \ref{thm Xsb} and \ref{Prop:sbX}}     
  
Similarly to the proof of Proposition \ref{lem small}, by using Lemmas \ref{lem Y moment} and \ref{Lem:Ta93}, 
we obtain the following lower bounds for the small ball probabilities of $Y$.
 
 \begin{lemma}\label{Lem: Ysb}
 \begin{itemize}
 \item[(a).]
 Assume  $\alpha\in (-1/2+\gamma/2, 1/2+\gamma/2)$. Then there exists a  
 constant $c_{7,5}\in (0, \infty)$ such that for all $r>0, \varepsilon>0$, 
  \begin{align}\label{Eq:Ysb}
 \mathbb P\bigg\{\sup_{0\le s\le r} |Y(s)|\le  \varepsilon \bigg\} \ge \exp\bigg(- c_{7,5}   \, {r^{H}}\, \varepsilon^{-1} \bigg).
\end{align} 
  \item[(b).]
 Assume   $\alpha\in (-1/2+\gamma/2, 1/2+\gamma/2)$.  Then there  
 exists a  constant $c_{7,6}\in (0,\infty)$   such that for all $t>0, 0<\varepsilon<r<t/2$, 
  \begin{align}\label{Eq:Ysc}
 \mathbb P\bigg\{\sup_{|s|\le r} |Y(t+s)-Y(t)|\le  \varepsilon \bigg\} \ge \exp\bigg(- c_{7,6} \, r \, t^{H-1}  \, \varepsilon^{-1}\bigg).
 \end{align}
 \end{itemize}
 \end{lemma}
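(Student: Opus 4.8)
The plan is to mimic the proof of Proposition \ref{lem small}, replacing the moment estimates for $Z$ by those for $Y$ in Lemma \ref{lem Y moment}, and to apply Talagrand's lower bound in the form of Lemma \ref{Lem:Ta93}. That lemma produces a lower bound for $\mathbb P\{\sup_{s,s'\in S}|Y(s)-Y(s')|\le u\}$, so, using the elementary inclusion $\{\sup_{s,s'\in S}|Y(s)-Y(s')|\le \varepsilon\}\subseteq\{\sup_{s\in S}|Y(s)-Y(t_0)|\le \varepsilon\}$ (with base point $t_0=0$ in part (a) and $t_0=t$ in part (b)), any such bound immediately yields the desired lower bounds \eqref{Eq:Ysb} and \eqref{Eq:Ysc}. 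Thus the whole task reduces to covering the index set in the canonical metric $d_Y(s,s')=\big(\mathbb E[(Y(s)-Y(s'))^2]\big)^{1/2}$ and controlling the covering number $N_\varepsilon$ by a function $\psi$ satisfying the doubling condition \eqref{Eq:Covering}.

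For part (a) I would take $S=[0,r]$. Since $\mathbb E[Y(s)^2]=c\,s^{2H}$ (by the $H$-self-similarity of $Y$, or by direct scaling in \eqref{eq Y}), one has $d_Y(0,s)=\sqrt c\,s^{H}$, so a single $d_Y$-ball of radius $\varepsilon$ centered at $0$ already covers $[0,t_1]$ with $t_1=(\varepsilon/\sqrt c)^{1/H}$. For $n\ge 2$ I would set $t_n=t_{n-1}+c_{2,1}^{-1/2}\,\varepsilon\,t_{n-1}^{1-H}$, exactly as in \eqref{def:tn} but adapted to the $Y$-metric; the upper bound in Lemma \ref{lem Y moment} then gives $d_Y(t_{n-1},t_n)\le\varepsilon$, so the balls centered at the $t_n$ cover $[0,r]$. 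Counting the number $L_\varepsilon$ of steps needed to reach $r$ is the crux: putting $b_n=t_n^H$ and applying the mean value theorem shows each step increases $b_n$ by an amount comparable to $\varepsilon$, whence $L_\varepsilon\asymp r^H/\varepsilon$ and $N_\varepsilon([0,r])\le\psi(\varepsilon):=C\,r^H\varepsilon^{-1}$. As $\psi$ satisfies \eqref{Eq:Covering} with $K_1=K_2=2$, Lemma \ref{Lem:Ta93} gives $\mathbb P\{\sup_{s,s'\in[0,r]}|Y(s)-Y(s')|\le\varepsilon\}\ge\exp(-KC\,r^H\varepsilon^{-1})$, which is \eqref{Eq:Ysb}.

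For part (b) I would take $S=I(t,r)=[t-r,t+r]$. Because $r<t/2$, the whole interval lies in $[t/2,3t/2]$, away from the origin, so the upper bound in Lemma \ref{lem Y moment} yields the uniform Lipschitz estimate $d_Y(s,s')\le c\,t^{H-1}|s-s'|$ for all $s,s'\in I(t,r)$. Covering an interval of Euclidean length $2r$ by $d_Y$-balls of radius $\varepsilon$ then needs at most $\psi(\varepsilon):=C\,r\,t^{H-1}\varepsilon^{-1}$ of them, and $\psi$ again satisfies \eqref{Eq:Covering}. Applying Lemma \ref{Lem:Ta93} and the inclusion above gives \eqref{Eq:Ysc}.

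I expect the only genuinely delicate point to be in part (a): although $Y$ has rough, $H$-Hölder behavior at the origin, one must check that the region near $0$ does not inflate the covering number to order $\varepsilon^{-1/H}$. The mechanism preventing this is that the local standard deviation of the increment scales like $s^{H-1}|t-s|$ while $\int_0^r w^{H-1}\,dw=r^H/H<\infty$ is integrable at $0$; this is exactly what keeps $N_\varepsilon([0,r])$ linear in $\varepsilon^{-1}$ with the correct prefactor $r^H$, rather than blowing up. The boundary regimes (where $\varepsilon$ is comparable to or larger than the $d_Y$-diameter of $S$, so that the stated bound is non-trivial only after enlarging the constant) are handled routinely and do not affect the argument. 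In part (b) there is no such obstacle, since $S$ is bounded away from the origin and $d_Y$ is genuinely Lipschitz-comparable to the Euclidean metric there.
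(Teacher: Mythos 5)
Your proposal is correct and is precisely the argument the paper intends: the paper proves Lemma \ref{Lem: Ysb} by saying it follows ``similarly to the proof of Proposition \ref{lem small}, by using Lemmas \ref{lem Y moment} and \ref{Lem:Ta93}'', i.e., the same covering construction in the canonical metric $d_Y$ (with the step recursion $t_n = t_{n-1} + c\,\varepsilon\, t_{n-1}^{1-H}$ near the origin for part (a) and the Lipschitz bound $d_Y(s,s') \le c\, t^{H-1}|s-s'|$ away from it for part (b)) fed into Talagrand's lower bound. Your verification that $L_\varepsilon \asymp r^H/\varepsilon$ via $b_n = t_n^H$ and your handling of the boundary regimes supply exactly the details the paper leaves implicit.
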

 
 \begin{proof}[Proof of Theorem \ref{thm Xsb}]  By the self-similarity of $X$, we know that \eqref{eq small X} 
 is equivalent to the following: For any $0<\varepsilon<1$,
  \begin{align}\label{eq small X7}
\exp\bigg(- \kappa_{4}  \Big(\frac{1}{\varepsilon}\Big)^{\frac{1}{\alpha+1/2}} \bigg)
\le  \mathbb P\bigg\{\sup_{s\in [0,1]} |X(s)|\le  \varepsilon \bigg\} 
\le  \exp\bigg(- \kappa_5 \Big(\frac{1}{\varepsilon}\Big)^{\frac{1}{\alpha+1/2}} \bigg).
 \end{align} 
  
By  \eqref{Eq:sb1},  \eqref{Eq:Ysb}   and the independence of $Y$ and $Z$, we have 
\[ \begin{split}
 \mathbb P\bigg\{\sup_{s\in [0,1]} |X(s)|\le \varepsilon  \bigg\}\ge &\,
 \mathbb P\bigg\{\sup_{s\in [0,1]} |Y(s)|\le \varepsilon/2  \bigg\}\cdot 
 \mathbb P\bigg\{\sup_{s\in [0,1]} |Z(s)|\le \varepsilon/2  \bigg\}\notag\\
 \ge &\, \exp\left(-  c_{7,5} \frac{2}{\varepsilon}\right) \cdot \exp\bigg(-c_{5,1}\Big( \frac{2}{\varepsilon} \Big)^{\frac{1}{\alpha+1/2}}\bigg) \\
   \ge &\, \exp\bigg(-    \left(2 c_{7,5}+2^{\frac{1}{\alpha+1/2}} c_{5,1}\right)\Big( \frac{1}{\varepsilon} \Big)^{\frac{1}{\alpha+1/2}}\bigg),
 \end{split}
 \]
 where we have used the facts that $\frac{1}{\varepsilon}\le \left(\frac{1}{\varepsilon} \right)^{\frac{1}{\alpha+1/2}}$ 
 as $\varepsilon \in (0,1)$ and $\alpha<1/2$. This proves the lower bound in \eqref{eq small X7}.
 
 On the other hand, by using the  Anderson inequality \cite{And1955} and   \eqref{Eq:sb1},  we have 
\[  \begin{split}
 \mathbb P\bigg\{\sup_{s\in [0,1]} |X(s)|\le \varepsilon  \bigg\} 
 = &\,\mathbb E\left[\mathbb P\bigg\{\sup_{s\in [0,1]} |Y(s)+Z(s)|\le \varepsilon \big|Y(s), s\in [0,1] \bigg\}  \right]\\
 \le &\, \mathbb P\bigg\{\sup_{s\in [0,1]} |Z(s)|\le \varepsilon  \bigg\}
 \le \, \exp\bigg(- c_{5,2}  \Big(\frac{1}{\varepsilon}\Big)^{\frac{1}{\alpha+1/2}} \bigg).
 \end{split}
 \]
This proves the upper bound in \eqref{eq small X7}. The proof is complete.
 \end{proof}

 Similarly to the proof of Theorem \ref{thm Xsb},  by using Proposition \ref{lem small2} and Lemma 
 \ref{Lem: Ysb}, we can prove Theorem \ref{Prop:sbX}. The detail is omitted.      
      
 \subsection{Proof of Theorem \ref{thm LIL0}}  
By Proposition \ref{prop Y}, $Y(t)$ is continuously differentiable on $[a, b]$ for any $0< a< b< \infty$. 
Hence Chung's LILs of $X$ (or $X'$ when it exists) at a fixed point $t > 0$ is the same as that of 
$Z$ (or $Z'$) at $t$. Therefore (\ref{eq LIL01}) and (\ref{eq LIL01b}) follows from \eqref{eq ZLIL1} 
and \eqref{eq ZLIL2},  respectively.
\qed

\begin{remark}\label{Re:refine}{\rm The small ball probabilities in Theorems \ref{thm Xsb} and \ref{Prop:sbX}
and the one-sided SLND in Proposition \ref{lem SLND} can be applied as in \cite{Xiao1997} to refine the results in 
\eqref{Eq:Gr} and \eqref{Eq:Level} by determining the exact Hausdorff measure functions for the graph and 
level set of GFBM $X$.  We refer to Talagrand \cite{Tal95,Tal98} for further ideas on investigating other related 
fractal properties of $X$. 
}
\end{remark}
      
 \subsection{Proof of Theorem \ref{thm LIL2} and the time inversion of  $X$}   
  
 \begin{proof}[Proof of Theorem \ref{thm LIL2}] As in the proof of Theorem \ref{thm LIL0}, it is sufficient to prove 
\eqref{eq LIL21}-\eqref{eq LIL21b} for $Z$ and $Z'$, respectively. 

First we consider the case when  $\alpha \in (-1/2+\gamma/2,1/2)$. It follows from \eqref{01U2} that for any fixed $t > 0$,  
 \begin{equation}\label{Eq:LIL-U}
 \limsup_{r\rightarrow0+} \sup_{ |h|\le r}\frac{ |Z(t+h)-Z(t)|} {r^{\alpha+1/2} \sqrt{\ln \ln 1/r}} 
 =  t^{-\gamma/2} \limsup_{r\rightarrow0+} \sup_{ |h|\le r} \frac{ |U(x + h )-U(x)|} {r^{\alpha+1/2} \sqrt{\ln \ln 1/r}}, 
\end{equation}
where $x = \ln t$. By Lemma \ref{lem r}  we see that the Gaussian process $\{U(s) - U(0)\}_{s \in \mathbb R} $ 
satisfies the conditions of Theorem 5.1 in \cite{MWX}  with $N=1$ and $\sigma(s, t) \asymp |s-t|^{\alpha+1/2}$ on 
any compact interval of $(0, \infty)$. Consequently,  there exists a constant $\kappa_{12} \in (0, \infty)$ which 
does not depend on $x$ such that
\begin{equation}\label{Eq:LIL-U2}
 \limsup_{r\rightarrow0+} \sup_{ |h|\le r} \frac{ |U(x + h )-U(x)|} {r^{\alpha+1/2} \sqrt{\ln \ln 1/r}} = \kappa_{12}, \ \ \ \hbox{a.s.}
\end{equation}
Combining (\ref{Eq:LIL-U}) and \eqref{Eq:LIL-U2} yields \eqref{eq LIL21} for $Z$. 

When  $\alpha = 1/2$, we see from Lemma \ref{lem r} that Condition (A1) in \cite{MWX} is satisfied with $N=1$ and 
$\sigma(s, t) \asymp |s -t| \sqrt{1 + \big|\ln |s-t| \big|}$ on any compact interval of $(0, \infty)$. This case is not explicitly 
covered by Theorem 5.1 in \cite{MWX}. However, an examination of its proof shows that we have
\begin{equation}\label{Eq:LIL-U3}
 \limsup_{r\rightarrow0+} \sup_{ |h|\le r} \frac{ |U(x + h )-U(x)|} {r \sqrt{\ln (1/r) \ln \ln 1/r}} = \kappa_{13}, \ \ \ \hbox{a.s.}
\end{equation}
for a constant $\kappa_{13} \in (0, \infty)$ that does not depend on $x$.  
Therefore, by combining \eqref{Eq:LIL-U} and \eqref{Eq:LIL-U3}     we obtain \eqref{eq LIL21a} 
for $Z$.

Finally, when $\alpha\in(1/2, \, 1/2+\gamma/2)$, the proof of \eqref{eq LIL21b} for $Z'$ is similar to that of 
\eqref{eq LIL21} for $Z$ given above. There is no need to repeat it. The proof of Theorem \ref{thm LIL2} is complete.
 \end{proof}
 
  The following  result is the {\it time inversion} property of GFBM $X$. It allows us to improve 
 slightly Theorem 6.1 in \cite{IPT2020},  where the law of the iterated logarithm of $X$ 
at the origin was proved under the extra condition of $\alpha>0$.

 \begin{proposition}\label{prop LIL0} 
 Let $X=\{X(t)\}_{t\ge0}$ be a GFBM with parameters $\gamma \in (0, 1)$ and  $\alpha\in(-1/2+\gamma/2, 1/2+\gamma)$.
 Define the Gaussian process  $\widetilde X=\{\widetilde X(t)\}_{t\ge0}$ by
 $$\widetilde X(0)=0, \ \ \  \widetilde X(t)=t^{2H}X\left(1/t\right),\ \ \ \ \  t>0.$$  
 Then $X$ and  $\widetilde X$ have the same distribution. Consequently,  there exists a constant $\kappa_{15}
  \in(0,\infty)$ such that 
   \begin{align}\label{eq LIL0}
   \limsup_{r \to 0+} \frac{|X(r)|} {r^H\sqrt{\ln \ln r^{-1}}}=\kappa_{15},\ \  \ \ \ \hbox{a.s.}
   \end{align}  
 \end{proposition}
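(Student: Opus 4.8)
The plan is to first establish the distributional identity $X \overset{d}{=} \widetilde X$ and then deduce the law of the iterated logarithm \eqref{eq LIL0} by transferring the LIL of $X$ at infinity to the origin. Since $X$ and $\widetilde X$ are both centered Gaussian processes with $X(0) = \widetilde X(0) = 0$, the identity $X \overset{d}{=} \widetilde X$ reduces to verifying that their covariance functions agree, i.e. that $s^{2H}t^{2H}\,\mathbb E[X(1/s)X(1/t)] = \mathbb E[X(s)X(t)]$ for all $s,t>0$. Rather than computing these integrals directly from \eqref{eq X}, I would exploit the $H$-self-similarity of $X$ through Lamperti's transformation, exactly as was done for $Z$ in Section \ref{sec:Lamperti}.

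Concretely, set $U_X(\tau) := e^{-H\tau}X(e^\tau)$ for $\tau\in\mathbb R$. The self-similarity of $X$ forces $U_X$ to be a centered stationary Gaussian process: applying self-similarity with scaling factor $e^a$ gives $\{X(e^a e^\tau)\}_\tau \overset{d}{=} \{e^{Ha}X(e^\tau)\}_\tau$, whence $\{U_X(\tau+a)\}_\tau \overset{d}{=} \{U_X(\tau)\}_\tau$ for every $a$. Consequently its covariance $r_{U_X}$ is an even function. The key computation is that the Lamperti transform of $\widetilde X$ is precisely the time reversal of $U_X$:
\[
e^{-H\tau}\widetilde X(e^\tau) = e^{-H\tau}(e^\tau)^{2H}X(e^{-\tau}) = e^{H\tau}X(e^{-\tau}) = U_X(-\tau).
\]
Because $r_{U_X}$ is even, $\{U_X(-\tau)\}_\tau$ and $\{U_X(\tau)\}_\tau$ are centered Gaussian processes with identical covariance and hence identical law. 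Transferring back through the deterministic pointwise relations $X(s)=s^H U_X(\ln s)$ and $\widetilde X(s)=s^H U_X(-\ln s)$ for $s>0$, together with $X(0)=\widetilde X(0)=0$, yields equality of all finite-dimensional distributions, so $X \overset{d}{=} \widetilde X$ on $[0,\infty)$.

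For the LIL, I would substitute $r=1/t$ and use $\widetilde X(t)=t^{2H}X(1/t)$ to obtain, pathwise,
\[
\limsup_{r\to0+}\frac{|X(r)|}{r^H\sqrt{\ln\ln r^{-1}}}
= \limsup_{t\to\infty}\frac{|\widetilde X(t)|}{t^H\sqrt{\ln\ln t}}.
\]
Since $\widetilde X \overset{d}{=} X$, the right-hand side has the same distribution as $\limsup_{t\to\infty}|X(t)|/(t^H\sqrt{\ln\ln t})$, which equals a finite positive constant $\kappa_{15}$ almost surely by the law of the iterated logarithm of $X$ at infinity. This infinity LIL holds for the whole stated range of $\alpha$ and can be read off from the functional LIL of Ichiba, Pang and Taqqu \cite{IPT2020}, or obtained directly from the stationary process $U_X$ via a Pickands-type estimate as in the discussion of the case $\tilde H=0$ in Section 5. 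A random variable that is equal in law to an almost sure constant is itself almost surely that constant, so \eqref{eq LIL0} follows with the same $\kappa_{15}$.

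The main obstacle is the distributional identity $X \overset{d}{=} \widetilde X$; once it is in hand, the LIL is a routine change of variables together with the appeal to the LIL at infinity. Within that step the only points needing care are the verification that $U_X$ is genuinely stationary (so that $r_{U_X}$ is even) and the passage from equality of finite-dimensional distributions on $(0,\infty)$, with the common value $0$ at the origin, to equality in law of the full processes, which is what transfers the almost sure asymptotic behavior. I would also emphasize that the time-inversion identity is a general feature of $H$-self-similar Gaussian processes and uses nothing about the integral representation \eqref{eq X} beyond self-similarity; in particular it removes the restriction $\alpha>0$ imposed in \cite[Theorem 6.1]{IPT2020}.
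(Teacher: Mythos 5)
Your proof is correct, and it reaches the distributional identity $X \overset{d}{=} \widetilde X$ by a genuinely different route from the paper. The paper verifies $\mathbb E\big[\widetilde X(s)\widetilde X(t)\big] = \mathbb E\left[X(s)X(t)\right]$ by a direct computation with the integral representation \eqref{eq X}, carrying out the change of variables $u=v/(st)$ inside the covariance integral. You instead argue abstractly: Lamperti's correspondence makes $U_X(\tau)=e^{-H\tau}X(e^\tau)$ stationary, its covariance $r_{U_X}$ is automatically even, and the identity $e^{-H\tau}\widetilde X(e^\tau)=U_X(-\tau)$ exhibits $\widetilde X$ as the inverse Lamperti transform of the time-reversed stationary process, which has the same law by evenness of $r_{U_X}$. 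What your approach buys is generality and conceptual clarity: as you correctly note, it shows that time inversion holds for \emph{every} centered $H$-self-similar Gaussian process, using nothing about \eqref{eq X} beyond self-similarity, and it fits naturally with the paper's systematic use of Lamperti's transformation in Sections \ref{sec:Lamperti}--6; what the paper's computation buys is a short, fully explicit verification that stands on its own. The second half of your argument (the pathwise substitution $r=1/t$, the transfer of the a.s.\ constant through equality in law, and the appeal to the LIL of $X$ at infinity from \cite[Theorem 5.1]{IPT2020}) is essentially the paper's own deduction. Two small cautions: your parenthetical alternative of deriving the infinity LIL from $U_X$ via a Pickands-type estimate as in Section 5 would only supply the lower bound, since \cite[Theorem 5.2]{Pickands67} is one-sided; the full statement with a finite positive constant still needs an upper bound and a zero-one law, so the citation to \cite{IPT2020} should remain the operative justification. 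And, as with the paper, the passage from equality of finite-dimensional distributions to equality in law of $\limsup_{t\to\infty}|\widetilde X(t)|/(t^H\sqrt{\ln\ln t})$ tacitly uses a separable (continuous) modification so that the limsup is determined by countably many coordinates; this is routine here but worth a sentence.
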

 \begin{proof} 
  Since $X$ and $\widetilde X$ are centered Gaussian processes,  it is sufficient to check their covariance functions. 
 For any $0< s\le t$,
 \begin{align*}
   \mathbb E\big[\widetilde X(s)\widetilde X(t)\big] 
      =&\,  (st)^{2H}\int_{\mathbb R} \left( \Big(\frac1s-u\Big)_+^{\alpha}-(-u)_+^{\alpha}  \right)
      \left( \Big(\frac1t-u\Big)_+^{\alpha}-(-u)_+^{\alpha}  \right) |u|^{-\gamma}du\notag\\
       =& \, \int_{\mathbb R} \left( \left(t-v\right)_+^{\alpha}-(-v)_+^{\alpha}  \right)
       \left( \left(s-v\right)_+^{\alpha}-(-v)_+^{\alpha}  \right) |v|^{-\gamma}dv\notag\\
=& \, \mathbb E\left[ X(s)X(t)\right],
\end{align*}
where  a change of variable $u=v/(st)$ is used in the second step. This proves the first part of Proposition \ref{prop LIL0}.
 
The time inversion property, together with the LIL of $X$ at infinity in \cite[Theorem 5.1]{IPT2020}, implies 
\eqref{eq LIL0}. The proof is complete. 
  \end{proof}
 
\subsection{Proof of Theorem \ref  {thm tangent} }  
By Proposition \ref{prop Y}, $Y(t)$ is continuously differentiable in $(0,\infty)$.
Hence  for every  $\alpha\in(-1/2+\gamma/2, 1/2)$, $t>0$ and for every  compact set $\mathcal K\subset \mathbb R_+$,  
  \begin{equation}\label{eq U}
 \lim_{u\rightarrow 0+} \sup_{\tau \in \mathcal K} \left|\frac{ X(t+u\tau)-X(t)}{u^{\alpha+1/2}}-
  \frac{ Z(t+u\tau)-Z(t)}{u^{\alpha+1/2}}\right|=0,  
 \ \ \     \, \,   \text{ a.s.}  
 \end{equation} 
 Therefore,  $X$ has the same tangent process as that of 
$Z$  at $t\in (0,\infty)$, and Theorem \ref{thm tangent} follows from Proposition \ref{Prop tangent Z}. \qed

 \vskip0.5cm

\noindent{\bf Acknowledgments}: 
The authors are grateful to the anonymous referees for their constructive comments and corrections which have lead to 
significant improvement of this paper.  
Part of the research in this paper was conducted during R. Wang's visit to Michigan State University (MSU).  
He thanks the Department of Statistics and Probability at MSU for the hospitality and the financial support from 
the CSC Fund, NNSFC 11871382, and the Fundamental Research Funds for the Central Universities 2042020kf0031. 
The research of Y. Xiao is partially supported by the NSF grant DMS-1855185.

 \vskip0.5cm

\medskip

\bigskip


\begin{thebibliography}{abc}

\bibitem{AT07} 
R.J. Adler and J.E. Taylor. {\it Random Fields and Geometry}. Springer, New York, 2007.

\bibitem{And1955} 
T.W. Anderson. The integral of a symmetric unimodal function over a symmetric convex 
set and some probability inequalities. {\it Proc. Amer. Math. Soc.},  {\bf 6}, 170-176, 1955.
\bibitem{Arc}
M.A. Arcones.  On the law of the iterated logarithm for Gaussian processes. {\it J. Theoret.  Probab.}, {\bf 8},
877-903, 1995.


\bibitem{BS11}
J.-M. Bardet and D. Surgailis. Measuring the roughness of random paths by increment ratios.  {\it Bernoulli},  
{\bf 17}(2), 749-780, 2011.

\bibitem{BS13}
J.-M. Bardet and D. Surgailis. Nonparametric estimation of the local Hurst function of multifractional 
Gaussian processes. {\it Stoch. Process. Appl.}, {\bf 123}(3), 1004-1045, 2013.

\bibitem{BJR97}
A. Benassi, S. Jaffard and D. Roux. Elliptic Gaussian random processes. {\it Rev. Mat.
Iberoamericana}, {\bf 13},  19-90, 1997.



\bibitem{Bil} P. Billingsley. {\it Convergence of Probability Measures}. Second edition.    John Wiley 
\& Sons, Inc., New York, 1999.

\bibitem{Fal02}
K.J. Falconer.  Tangent fields and the local structure of random fields. {\it J. Theoret. Probab.}, {\bf 15}(3), 731-750, 
2002.

\bibitem{Fal03}
K.J. Falconer. 
The local structure of random processes.  {\it J. London Math. Soc.}, {\bf 67}(3),   657-672, 2003. 


\bibitem{Fal14}
K.J. Falconer. {\it Fractal Geometry -- Mathematical Foundations And Applications.}  3rd edition, 
John Wiley \& Sons Ltd., Chichester, 2014.

 

 \bibitem{IPT2020} 
T.  Ichiba,  G.D. Pang and M.S. Taqqu.
 Path properties of a generalized fractional Brownian motion. {\it J. Theoret. Probab.},  online, 
  \href{https://doi.org/10.1007/s10959-020-01066-1}{https://doi.org/10.1007/s10959-020-01066-1}, 2021.
 
  \bibitem{IPT2020b} 
T.  Ichiba,  G.D. Pang and M.S. Taqqu. Semimartingale properties of a generalized 
fractional Brownian motion and its mixtures with applications in finance.  
\href{https://arxiv.org/abs/2012.00975}{arXiv: 2012.00975}, 2020.
 
 \bibitem{JM1978} N.C. Jain and M.B. Marcus.  Continuity of sub-Gaussian processes. 
 {\it  Probability on Banach spaces}, Adv. Probab. Related Topics, {\bf 4}, 81-196,  Dekker, New York, 1978.
 
  
 
 
 
 \bibitem{Kahane85}
J.-P. Kahane.  {\it Some Random Series of Functions.}  2nd
edition, Cambridge University Press, Cambridge, 1985.

\bibitem{KK1971}
T. Kawada and N.  K\^ono.   A remark on nowhere differentiability of sample 
functions of Gaussian processes. 
{\it Proc. Japan Acad.}, {\bf 47}, 932-934, 1971.
  
  
\bibitem{K} 
D. Khoshnevisan. {\it Analysis of Stochastic Partial Differential Equations}. 
CBMS Regional 
Conference Series in Mathematics, 119. American Mathematical Society, 2014.


\bibitem{KK2004}
C.  Kl\"uppelberg and C. K\"uhn. Fractional brownian motion as a weak limit of 
Poisson shot noise processes with applications to finance. {\it Stoch. Process.  
Appl.},  {\bf 113}, 333-351, 2004.

\bibitem{Lachal97}
A. Lachal. Local asymptotic classes for the successive primitives of Brownian motion. 
{\it Ann. Probab.}, {\bf 25},  1712-1734, 1997. 

\bibitem{Ledoux} 
M.  Ledoux. {\it Isoperimetry and Gaussian analysis}.  Lectures on Probability Theory
and Statistics (Saint-Flour, 1994). Lecture Notes in Math., {\bf 1648}, 165--294.
Springer-Verlag, Berlin, 1996.


\bibitem{L53}
P. L\'evy. Random functions: General theory with special references to Laplacian random 
functions. {\it University of California Publications in Statistics}, {\bf 1}, 331-390, 1953.

\bibitem{LiShao01}
W.V. Li and Q.-M. Shao. Gaussian processes: inequalities,
small ball probabilities and applications. In {\it Stochastic
Processes: Theory and Methods}. Handbook of Statistics,
{\bf 19}, (C.R. Rao and D. Shanbhag, editors), pp. 533--597,
North-Holland,  2001.
  
\bibitem{Lo}
L. Lo\'eve.  {\it Probability Theory I}. New York, Springer, 1977.
 
\bibitem{MN1968}
B. Mandelbrot and  J. Van Ness.  Fractional Brownian motions, fractional noises and applications. 
{\it SIAM Rev.}, {\bf10}(4), 422-437, 1968.
  
\bibitem{MR}
M.B. Marcus and J. Rosen.  {\it Markov Processes, Gaussian Processes, and Local Times}.
Cambridge University Press, Cambridge, 2006.

\bibitem{MR99}
D. Marinucci and P.M. Robinson. Alternative forms of fractional Brownian motion. {\it J. Statist. Plann. 
Inference}, {\bf 80}, 111-122, 1999.  


\bibitem{MWX}
M.  Meerschaert,  W. Wang and Y.  Xiao.  Fernique-type inequalities and moduli of continuity for anisotropic 
Gaussian random fields. {\it Trans. Amer. Math. Soc.},  {\bf 365}(2), 1081-1107, 2013.
 
 
 \bibitem{MR1995} 
 D. Monrad and H. Rootz\'en. Small values of Gaussian processes and functional laws of the iterated logarithm. 
{\it Probab. Theory Relat. Fields},  {\bf101}, 173-192, 1995.
  
 
\bibitem{PT2019}
G.D. Pang and M.S. Taqqu. Nonstationary self-similar Gaussian processes as scaling limits of power-law
shot noise processes and generalizations of fractional Brownian motion. {\it High Freq.}, {\bf2}(2), 95-112, 2019.

\bibitem{Pickands67}  
J. Pickands III. Maxima of stationary Gaussian processes. {\it Z. Wahrsch. Verw. Gebeite}, {\bf 7},  
190-223, 1967.
  


\bibitem{PipTaq2017}
V. Pipiras and M.S. Taqqu. {\it Long-Range Dependence and Self-Similarity}.  Cambridge University Press, 
Cambridge, 2017.


\bibitem{Sko19} V. Skorniakov.  On a covariance structure of some subset of self-similar Gaussian processes.  {\it Stoch. 
Process. Appl.}, {\bf 129}(6),   1903-1920, 2019. 


 

\bibitem{Tal95}
M. Talagrand. Hausdorff measure of trajectories of multiparameter fractional Brownian motion.  {\it Ann. Probab.}, 
{\bf 23}, 767-775, 1995.

 
\bibitem{Tal96}
M. Talagrand. Lower classes of fractional Brownian motion. {\it J. Theoret. Probab.},  {\bf 9}, 191-213, 1996. 

\bibitem{Tal98}
M. Talagrand. Multiple points of trajectories of multiparameter fractional Brownian motion. {\it Probab. Theory
Relat. Fields}, {\bf 112}, 545-563, 1998.

\bibitem{TX2007}
C.A. Tudor and Y.  Xiao. Sample path properties of bifractional Brownian motion. {\it Bernoulli}, {\bf 13}, 
1023-1052, 2007.

\bibitem{WX2021} R. Wang and
Y. Xiao.
Lower functions and Chung's LILs of the generalized fractional Brownian motion.  
\href{https://arxiv.org/abs/2105.03613}{arXiv:2105.03613}, 2021.

\bibitem{WSX}
W. Wang, Z. Su and Y. Xiao. The moduli of non-differentiability for Gaussian random fields with
stationary increments. {\it Bernoulli}, {\bf 26}, 1410-1430, 2020.

\bibitem{Xiao1997}
Y. Xiao,  H\"older conditions for the local times and the Hausdorff measure of the level sets of Gaussian 
random fields. {\it Probab. Theory Relat. Fields},  {\bf 109}, 129-157, 1997.


\bibitem{Xiao2007}
Y.  Xiao. Strong local nondeterminism and the sample path properties of Gaussian random fields.  
{\it  Asymptotic Theory in Probability and Statistics with Applications}  (T.L. Lai, Q. Shao and L. Qian, eds.) 
136-176. Higher Education Press, Beijing, 2007.

\bibitem{Xiao2009}
Y. Xiao. Sample path properties of anisotropic Gaussian random fields. In: {\it A Mini-course on Stochastic 
Partial Differential Equations},  D. Khoshnevisan, F. Rassoul-Agha, editors, Lecture Notes in Math. 1962,
 pp 145-212, Springer, New York, 2009.

\bibitem{Yeh} J.  Yeh. Differentiability of sample functions in Gaussian processes. {\it Proc. Amer. Math.
Soc.}, {\bf 18}(1), 105-108, 1967.   Correction  in  {\it Proc. Amer. Math. Soc.},  {\bf19}(4), 843, 1968.
\end{thebibliography}
\end{document}